\newtheorem{theorem}{Theorem}
\newtheorem{lemma}[theorem]{Lemma}
\newtheorem{prop}[theorem]{Proposition}
\newtheorem{claim}[theorem]{Claim}
\newtheorem*{defn}{Definition}
\newtheorem{corollary}[theorem]{Corollary}
\theoremstyle{remark}
\newcommand{\R}{\mathbb R}
\newcommand{\sn}[1]{\| #1 \|^*}
\newcommand{\E}{\mathbb{E}}
\newcommand{\eps}{\epsilon}
\begin{document}
\title{Sign rank versus VC dimension\footnote{A preliminary version of this work was published in the proceeding of COLT'16.}}

\author{ Noga Alon\thanks{Sackler School of Mathematics and Blavatnik
School of Computer Science, Tel Aviv University, Tel Aviv 69978, Israel,
Microsoft Research, Herzliya, 
and School of Mathematics, Institute for Advanced Study, Princeton,
NJ 08540.  {\tt nogaa@tau.ac.il}.  Research supported in part by a
USA-Israeli BSF grant, by an ISF grant, by the Israeli I-Core program
and by the Oswald Veblen Fund.}
\and
Shay Moran\thanks{
Department of Computer Science, Technion-IIT, Microsoft Research, Herzliya,  and Max Planck Institute for Informatics, Saarbr\"{u}cken, Germany.
{\tt  shaymrn@cs.technion.ac.il}.}
\and
Amir Yehudayoff\thanks{
Department of Mathematics, Technion-IIT.
Email: \texttt{amir.yehudayoff@gmail.com}.
Horev fellow -- supported by the Taub
foundation.  Research also supported by ISF and BSF.}}

\maketitle
\begin{abstract}
This work studies the maximum
possible sign rank of $N \times N$ sign matrices with a given
VC dimension $d$.
For $d=1$, this maximum is {three}.
For $d=2$, this maximum is $\tilde{\Theta}(N^{1/2})$.
For $d >2$, similar but slightly less accurate statements hold.
{The lower bounds improve over previous ones by Ben-David et al., and 
the upper bounds are novel.}

The lower bounds are obtained by probabilistic constructions,
using a theorem of Warren in real algebraic topology.
The upper bounds are obtained using 
a result of Welzl about spanning trees with low stabbing number,
and using the moment curve.

The upper bound technique is also used to:
(i) provide estimates on the number of classes
of a given VC dimension, 
and the number of maximum 
classes of a given VC dimension -- answering a question of Frankl from '89, 
and (ii) design an efficient 
algorithm that provides an $O(N/\log(N))$ multiplicative
approximation for the sign rank.

We also observe a general connection between sign rank
and spectral gaps which is based on Forster's argument. 
Consider the $N \times N$ adjacency matrix of a $\Delta$ regular graph
with a second eigenvalue of absolute value $\lambda$ and 
$\Delta \leq N/2$.
We show that the sign rank of the signed version of this matrix
is at least $\Delta/\lambda$.
We use this connection to prove the existence
of a maximum class $C\subseteq\{\pm 1\}^N$ with VC dimension $2$
and sign rank $\tilde{\Theta}(N^{1/2})$. This answers a question of 
Ben-David et al.~regarding the sign rank of large VC classes.
We also describe limitations of this approach,
in the spirit of the Alon-Boppana theorem.

We further describe connections to communication complexity, geometry,
learning theory, and combinatorics.
\end{abstract}


\thispagestyle{empty}
\newpage

\setcounter{page}{0}

\setcounter{tocdepth}{2}
\tableofcontents

\thispagestyle{empty}

\setcounter{page}{0}
\newpage

\section{Introduction}
Boolean matrices (with $0,1$ entries)
and sign matrices (with $\pm 1$ entries)
naturally appear in many areas of research\footnote{There is a standard transformation of a boolean matrix $B$ to the sign matrix $S= 2B - J$, where
$J$ is the all $1$ matrix.
The matrix $S$ is called the signed version of $B$,
and the matrix $B$ is called the boolean version of $S$.}.
We use them e.g.\ to represent set systems and graphs in combinatorics,
hypothesis classes in learning theory,
and boolean functions in communication complexity.

This work further investigates the relation between two useful complexity measures on sign matrices.

\begin{defn}[Sign rank]
For a real matrix $M$ with no zero entries, let $\text{sign}(M)$ denote the sign matrix such
that $(\text{sign}(M))_{i,j}=\text{sign}(M_{i,j})$ for all $i,j$.
The sign rank of a sign matrix $S$ is defined as
$$\text{sign-rank}(S) = \min \{ \text{rank}(M) : \text{sign}(M)=S\},$$
where the rank is over the real numbers.
It captures the minimum dimension of a real space
in which the matrix can be embedded using half spaces 
through the origin \footnote{That is, the columns correspond to points in $\R^k$ and
the rows to half spaces 
through the origin
(i.e.\ collections of all points $x \in \R^k$ so 
that $\langle x, v \rangle \geq 0$
for some fixed $v \in \R^k$).} 
(see for example~\cite{Lokam}).
\end{defn}

\begin{defn}[Vapnik-Chervonenkis dimension]
The VC dimension of a sign matrix $S$, denoted $VC(S)$, is defined as follows.
A subset $C$ of the columns of $S$ is called shattered 
if each of the $2^{|C|}$ different patterns of ones and minus ones appears
in some row in the restriction of $S$ to the columns in $C$.
The VC dimension of $S$ is the maximum size of
a shattered subset of columns. It captures 
the size of the minimum $\epsilon$-net 
for the underlying set system~\cite{HW86,KPW92}.
\end{defn}

The VC dimension and the sign rank appear in various areas 
of computer science and mathematics.
One important example is learning theory, where 
the VC dimension captures the sample complexity of learning in the PAC model~\cite{DBLP:conf/stoc/BlumerEHW86,VCpaper}, and the sign rank 
relates 
to the {generalization guarantees} of practical learning algorithms,
such as support vector machines, large margin classifiers, and kernel classifiers~\cite{DBLP:conf/coco/LinialS08,
DBLP:conf/fsttcs/ForsterKLMSS01,DBLP:conf/colt/ForsterSS01, 
DBLP:journals/tcs/ForsterS06,DBLP:journals/datamine/Burges98,
DBLP:books/daglib/0097035}.
Loosely speaking, the VC dimension relates to learnability, while sign rank
relates to learnability by linear classifiers.
Another example is communication complexity, where
the sign rank is equivalent to the unbounded error randomized communication
complexity~\cite{DBLP:journals/jcss/PaturiS86}, and the VC dimension
relates to one round distributional communication complexity
under product distributions~\cite{DBLP:conf/stoc/KremerNR95},

The main focus of this work is
how large can the sign rank be for a given VC dimension.
In learning theory, this question concerns the universality of linear classifiers.
In communication complexity, 
this concerns the difference between randomized 
communication complexity with unbounded error
and between communication complexity under product distribution
with bounded error.
Previous works have studied these differences 
from the communication complexity perspective~\cite{DBLP:journals/cc/Sherstov10,DBLP:conf/coco/Sherstov07} and the learning theory perspective~\cite{DBLP:journals/jmlr/Ben-DavidES02}.
In this work we provide explicit matrices and 
stronger separations compared 
to those of~\cite{DBLP:journals/cc/Sherstov10,DBLP:conf/coco/Sherstov07}
and~\cite{DBLP:journals/jmlr/Ben-DavidES02}. See the discussions in Section~\ref{sec:mainseparation} and Section~\ref{sec:communication} for more details.

\subsection{Duality}

We start by providing alternative descriptions of the VC dimension and sign rank,
which demonstrate that these notions are dual to each other.
The sign rank of a sign matrix $S$ is the maximum number $k$ such that
\begin{align*}
& \forall \ M\mbox{ such that } \text{sign}(M)=S \ \ \exists\mbox{ $k$ columns
$j_1,\ldots,j_k$ } \\
& \qquad  \mbox{the columns $j_1,\ldots,j_k$ are linearly independent in $M$}
\end{align*}
The {\em dual sign rank} of $S$ is the maximum number $k$ such that
\begin{align*}
& \exists\mbox{ $k$ columns $j_1,\ldots,j_k$} 
 \ \  \forall \ M\mbox{ such that } \text{sign}(M)=S \\
  & \qquad  \mbox{the columns $j_1,\ldots,j_k$ are linearly independent in $M$}.
\end{align*}
It turns out that the dual sign rank is 
almost equivalent to the VC dimension
(the proof is in Section~\ref{sec:duality}).
\begin{prop}\label{prop:dualsignrank}
$VC(S)\leq \text{dual-sign-rank}(S) \leq 2VC(S)+1$.
\end{prop}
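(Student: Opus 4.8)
The plan is to prove the two inequalities separately by elementary sign arguments, with the Sauer--Shelah lemma doing the counting in the upper bound.

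\emph{Lower bound} $VC(S)\le\text{dual-sign-rank}(S)$. I would show that a shattered set of columns already witnesses the dual sign rank. Fix a shattered set of columns $C=\{j_1,\dots,j_d\}$ with $d=VC(S)$, and let $M$ be any matrix with $\text{sign}(M)=S$. If the columns $j_1,\dots,j_d$ were linearly dependent in $M$, say $\sum_{\ell}v_\ell M_{\cdot,j_\ell}=0$ with $v\ne 0$, consider the pattern $\sigma\in\{\pm1\}^d$ with $\sigma_\ell=\text{sign}(v_\ell)$ whenever $v_\ell\ne 0$ (arbitrary otherwise). Since $C$ is shattered, some row $i$ realizes $\sigma$ on $C$, i.e.\ $\text{sign}(M_{i,j_\ell})=\sigma_\ell$ for all $\ell$; then $v_\ell M_{i,j_\ell}=0$ when $v_\ell=0$ and $v_\ell M_{i,j_\ell}>0$ when $v_\ell\ne 0$ (as $M$ has no zero entries), so $\sum_\ell v_\ell M_{i,j_\ell}>0$, a contradiction. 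Hence these $d$ columns are linearly independent in every realization $M$, which is precisely the statement $\text{dual-sign-rank}(S)\ge d$.

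\emph{Upper bound} $\text{dual-sign-rank}(S)\le 2VC(S)+1$. Let $k=\text{dual-sign-rank}(S)$, fix witnessing columns $j_1,\dots,j_k$, let $S'$ be the submatrix of $S$ on them, and let $R\subseteq\{\pm1\}^k$ be its set of rows. Any realization of $S'$ extends to a realization of $S$ (fill the remaining columns with arbitrary entries of the correct sign), so the $k$ columns of $S'$ are linearly independent in \emph{every} realization of $S'$. I claim this forces the covering condition: for every nonempty $A\subseteq[k]$ and every $\sigma\in\{\pm1\}^A$, some row of $R$ restricts on $A$ to $\sigma$ or to $-\sigma$. Indeed, suppose this fails for some $A$ and some antipodal pair $\{\sigma,-\sigma\}$. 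Put $v|_A=\sigma$ and $v|_{[k]\setminus A}=0$. For a row with sign pattern $s$, the numbers $(v_\ell s_\ell)_{\ell\in A}$ lie in $\{\pm1\}$ and, by the failure of the condition, are not all equal; so, writing $P$ and $N$ for the sets of $\ell\in A$ where this number is $+1$ resp.\ $-1$ (both nonempty), one may take the magnitude of that row's entry at $\ell$ to be $|N|$ for $\ell\in P$ and $|P|$ for $\ell\in N$ (and anything positive outside $A$), making the row orthogonal to $v$. This produces a realization of $S'$ in which the $k$ columns are dependent, contradicting the choice of $j_1,\dots,j_k$; the claim follows.

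Now I would finish with Sauer--Shelah. Assume for contradiction that $k\ge 2VC(S)+2$, and pick $A\subseteq\{j_1,\dots,j_k\}$ with $|A|=2d+2$, where $d=VC(S)$. Applying the covering condition to all of $\{\pm1\}^A$: each of the $2^{2d+1}$ antipodal pairs must be hit by the restricted row set $R_A:=\{r|_A:r\in R\}$, so $|R_A|\ge 2^{2d+1}$. On the other hand $R_A$, viewed as a set system on $A$, has VC dimension at most $VC(S)=d$ (a subset of $A$ shattered inside $S'$ is shattered inside $S$), so Sauer--Shelah gives $|R_A|\le\sum_{i=0}^{d}\binom{2d+2}{i}=2^{2d+1}-\tfrac12\binom{2d+2}{d+1}<2^{2d+1}$, a contradiction. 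Hence $k\le 2VC(S)+1$. The step I expect to be the crux is the combinatorial characterization in the upper bound --- extracting from ``independent under every realization'' the statement that every antipodal pair of sign patterns, on every coordinate subset, occurs as a restriction of some row, and exhibiting the dependence cleanly (one vector $v$, then independently the magnitudes in each row). The precise size $2d+2$ (rather than $2d+1$) is exactly what makes the Sauer--Shelah count strict, via the middle term $\binom{2d+2}{d+1}$.
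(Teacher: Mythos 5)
Your proof is correct. The lower bound is the same argument as the paper's (a shattered set is \emph{antipodally} shattered, hence independent in every realization; your sign-of-$v$ row argument is exactly the paper's), and your construction of a degenerate realization from a missing antipodal pair (prescribing magnitudes $|N|$ on the positive positions and $|P|$ on the negative ones) is a concrete instance of the paper's observation that the hyperplane $\{u:\sum_j u_jv_j=0\}$ contains a vector of every sign pattern other than $\pm v$. Where you genuinely diverge is the upper bound. The paper argues directly: take $A\subseteq C$ of size $\lfloor|C|/2\rfloor$; either $A$ is shattered, or some pattern $v$ is missing on $A$, in which case antipodal shattering of $C$ forces every pattern extending $-v$ to appear, so $C\setminus A$ is shattered --- either way $VC(S)\ge\lfloor|C|/2\rfloor$. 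You instead count: every one of the $2^{2d+1}$ antipodal pairs on a $(2d+2)$-subset must be hit, while Sauer--Shelah caps the number of distinct restrictions at $\sum_{i=0}^{d}\binom{2d+2}{i}=2^{2d+1}-\tfrac12\binom{2d+2}{d+1}$. Both yield exactly $2d+1$; the paper's dichotomy is shorter and needs no external lemma, while your route makes transparent \emph{why} $2d+2$ is the breaking point (the middle binomial coefficient is precisely the slack) and generalizes mechanically to any class whose growth function is known.
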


As the dual sign rank is at most the sign rank,
it follows that
the VC dimension is at most the sign rank.
This provides further motivation for studying
the largest possible gap between sign rank and VC dimension;
it is equivalent to the largest possible gap
between the sign rank and the dual sign rank. 

It is worth noting that there are some interesting classes of matrices for which these quantities are equal.
One such example is the $2^n\times 2^n$ disjointness matrix $DISJ$, whose rows and columns are indexed by all subsets of $[n]$, and $DISJ_{x,y}=1$ if and only if $|x\cap y|  > 0$.
For this matrix both the sign rank and the dual sign rank are exactly $n+1$.

\subsection{Sign rank versus VC dimension}\label{sec:mainseparation}

The VC dimension is at most the sign rank. 
On the other hand, it is long known that the sign rank is not bounded from above
by any function of the VC dimension.
Alon, Haussler, and Welzl \cite{DBLP:conf/compgeom/AlonHW87}
provided examples of $N\times N$ matrices with VC dimension $2$
for which the sign rank 
tends to infinity with $N$.
\cite{DBLP:journals/jmlr/Ben-DavidES02}
used ideas from \cite{DBLP:conf/focs/AlonFR85} together
with estimates concerning the Zarankiewicz problem
to show that many matrices with constant VC dimension
(at least $4$) have high sign rank.

We further investigate the problem of determining or estimating the
maximum possible sign rank of $N \times N$ matrices with VC dimension
$d$. Denote this maximum by $f(N,d)$.
We are mostly interested in fixed $d$ and $N$ tending to infinity.

We observe that there is a dichotomy between the behaviour of $f(N,d)$ when $d=1$
and when $d>1$.
The value of $f(N,1)$ is $3$,
but for $d>1$, the value of $f(N,d)$ tends to infinity with $N$.
We now discuss the behaviour of $f(N,d)$ in more detail,
and describe our results.

We start with the case $d=1$.
The following theorem and claim imply that 
for all $N \geq 4$,
$$f(N,1)=3.$$

The following theorem which 
was proved by \cite{DBLP:conf/compgeom/AlonHW87}
shows that for $d=1$,
matrices with high sign rank do not exist.
For completeness, we provide our simple 
and constructive proof in Section~\ref{sec:VCone}.

\begin{theorem}[\cite{DBLP:conf/compgeom/AlonHW87}]
\label{thm:VC1}
If the VC dimension of a sign matrix $M$
is one then its sign rank is at most $3$.
\end{theorem}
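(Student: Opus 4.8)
The plan is to show that, after re-signing its columns, a matrix of VC dimension at most one is governed by a \emph{laminar} family of subsets of its rows, and then to realize such a matrix on the moment curve in $\R^3$; transpose invariance of sign rank, $\text{sign-rank}(M)=\text{sign-rank}(M^\top)$, keeps the bookkeeping clean.

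First I would normalize: pick any row $r_0$ of $M$ and flip the sign of every column $j$ with $M_{r_0,j}=-1$. This changes neither the sign rank (negate the corresponding coordinate) nor the VC dimension (it only relabels the patterns on any set of columns), and afterwards $r_0$ is the all-$+1$ row. For a column $j$ set $A_j=\{v:M_{v,j}=-1\}$, so $r_0\notin A_j$. Now fix two columns $j,k$: the pattern $(+,+)$ appears in row $r_0$, and since $\{j,k\}$ is not shattered, one of $(+,-),(-,+),(-,-)$ is missing. Missing $(-,-)$ means $A_j\cap A_k=\emptyset$; missing $(+,-)$ means $A_k\subseteq A_j$; missing $(-,+)$ means $A_j\subseteq A_k$. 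Hence $\{A_j\}_j$ is a laminar family on the set of rows — any two of its members are disjoint or nested. This is the one step that really uses the VC‑dimension hypothesis.

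Next I would convert the laminar structure to intervals. Order the rows $v_1,\dots,v_m$ so that each $A_j$ becomes a block $\{v_{a_j},\dots,v_{b_j}\}$ of consecutive rows; this is always possible for a laminar family (order the rows by a depth‑first traversal of the forest that $\{A_j\}_j$ forms under inclusion). Place the rows on the moment curve, $p_t=(1,t,t^2)\in\R^3$, and to each column $j$ assign the quadratic $q_j(t)=(t-a_j+\tfrac12)(t-b_j-\tfrac12)$ — or a nonzero constant of the appropriate sign if $A_j$ is empty or all of the rows. Its coefficient vector $w_j\in\R^3$ satisfies $\langle w_j,p_t\rangle=q_j(t)$, which is negative precisely for the integers $t\in\{a_j,\dots,b_j\}$ and positive for the remaining integers in $\{1,\dots,m\}$, i.e. $\text{sign}\langle w_j,p_t\rangle=M_{v_t,j}$ for all $t,j$. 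Therefore the matrix $N=(\langle w_j,p_t\rangle)_{j,t}$ has no zero entries, factors through $\R^3$ so that $\text{rank}(N)\le 3$, and satisfies $\text{sign}(N)=M^\top$; hence $\text{sign-rank}(M)=\text{sign-rank}(M^\top)\le 3$.

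The only genuinely new point is spotting the laminar family hiding inside a VC‑dimension‑one matrix; after that, the argument is the classical laminar‑to‑intervals trick together with the fact that a real quadratic changes sign at most twice, so its positive set along the moment curve is always an interval or the complement of one. The main thing to be careful about is that the laminar family lives on the \emph{rows} while VC dimension is a statement about the \emph{columns}, so realizing $M^\top$ rather than $M$ (and invoking $\text{sign-rank}(M)=\text{sign-rank}(M^\top)$) is what prevents the signs and indices from getting tangled.
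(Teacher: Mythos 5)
Your proof is correct, but it takes a genuinely different route from the one in the paper. The paper proves Lemma~\ref{lem:VCone}: by induction on the number of columns, using Claim~\ref{clm:ID} (existence of a column in which exactly one row differs from all the others), it embeds the rows as points on the unit circle in $\R^2$ and the columns as half-planes, which gives sign rank at most $2+1=3$. You instead prove a structural statement: after re-signing columns so that some row is all $+1$, the supports $A_j$ form a laminar family, hence the rows can be linearly ordered so that every $A_j$ is an interval; equivalently every column has at most two sign changes in this order, i.e.\ $SC^*(M)\le 2$, and your moment-curve computation is exactly the standard argument (from \cite{DBLP:conf/focs/AlonFR85}, stated in the paper as Lemma~\ref{l43}) that sign rank is at most $SC^*+1$. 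All the individual steps check out: laminarity uses only that no pair of columns is shattered, with the all-$+1$ row supplying the $(+,+)$ pattern; column re-signing, row permutation, and transposition preserve both VC dimension and sign rank; and the laminar-to-interval ordering is the standard depth-first layout. As for what each approach buys: your argument yields the stronger combinatorial conclusion $SC^*\le 2$ in one global, non-inductive step (the paper also obtains this fact, but via a recursive column-removal argument in Section~\ref{sec:approxproof}, noted there as an alternative proof of Lemma~\ref{lem:VCone}), and it plugs directly into the paper's general sign-changes machinery; the paper's proof, on the other hand, produces an explicit two-dimensional embedding whose extra structure (rows on the unit circle) is precisely what lets its induction go through.
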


We also note that the bound $3$ is tight
(see Section~\ref{sec:VCone} for a proof).
\begin{claim}
\label{clm:Id4}
For $N\geq 4$, the $N \times N$ signed identity matrix
(i.e.\ the matrix 
with $1$ on the diagonal and~$-1$ off the diagonal) 
has VC dimension one and sign rank $3$.
\end{claim}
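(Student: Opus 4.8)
The plan is to establish the two parts separately: that the signed identity $S$ has $VC(S)=1$, and that $\text{sign-rank}(S)=3$. For the VC dimension, I would first check that no two columns can be shattered: if $a\ne b$ are columns, then the restriction of $S$ to $\{a,b\}$ has only the rows $(1,-1)$ (row $a$), $(-1,1)$ (row $b$), and $(-1,-1)$ (all other rows), so the pattern $(1,1)$ is absent. A single column is shattered since it contains a $1$ (on the diagonal) and a $-1$ (as $N\ge 2$), hence $VC(S)=1$. Theorem~\ref{thm:VC1} then gives $\text{sign-rank}(S)\le 3$ immediately, so the only real work is the lower bound $\text{sign-rank}(S)\ge 3$.

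For the lower bound I would argue by contradiction. Suppose $\text{sign-rank}(S)\le 2$ and fix $M$ with $\text{rank}(M)\le 2$ and $\text{sign}(M)=S$. Writing $M=AB$ with $A\in\R^{N\times 2}$, $B\in\R^{2\times N}$, and letting $y_i\in\R^2$ be the $i$-th row of $A$ and $x_j\in\R^2$ the $j$-th column of $B$, we get $\langle y_i,x_j\rangle\ne 0$ with $\text{sign}(\langle y_i,x_j\rangle)=S_{ij}$ for all $i,j$. In particular every $x_j\ne 0$, and since the columns of $S$ are pairwise distinct — indeed no column of $S$ is a positive scalar multiple of another — the normalized points $\hat x_j=x_j/\|x_j\|$ are $N$ distinct points on the unit circle $S^1$. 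The constraint from row $i$ is precisely that the open half-plane $\{z:\langle y_i,z\rangle>0\}$ contains $\hat x_i$ while every $\hat x_j$ with $j\ne i$ lies strictly on the other side of the line $\{z:\langle y_i,z\rangle=0\}$; equivalently, there is an open semicircle containing $\hat x_i$ and having all the other $\hat x_j$'s strictly outside its closure.

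Now pick any four of the column directions; permuting $[N]$ (which keeps $S$ a signed identity) we may assume these are $\hat x_1,\hat x_2,\hat x_3,\hat x_4$ occurring in this cyclic order on $S^1$, with consecutive arc lengths $a_1,a_2,a_3,a_4>0$, so $a_1+a_2+a_3+a_4=2\pi$. The key geometric observation is that isolating $\hat x_1$ by an open semicircle forces $\hat x_2,\hat x_3,\hat x_4$ to lie inside an open arc of length $\pi$; since their minimal enclosing arc is $\hat x_2\to\hat x_3\to\hat x_4$ of length $a_2+a_3$, this gives $a_2+a_3<\pi$. Applying the same reasoning to $\hat x_3$ gives $a_4+a_1<\pi$. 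Adding the two inequalities yields $2\pi=a_1+a_2+a_3+a_4<2\pi$, a contradiction. Hence $\text{sign-rank}(S)\ge 3$, and together with the upper bound we conclude $\text{sign-rank}(S)=3$.

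The step I expect to be the main obstacle is the geometric observation above: converting "$\hat x_i$ is isolated from the other $\hat x_j$'s by an open semicircle whose boundary avoids all of them" into the clean arc inequality $a_2+a_3<\pi$. The care needed is in the open/closed bookkeeping — one must use $\langle y_i,x_j\rangle\ne 0$, i.e.\ that no $\hat x_j$ lies on the boundary line, to rule out the borderline case $a_2+a_3=\pi$ — and in observing that the presence of the remaining $N-4$ column directions can only make isolation harder, so the inequality extracted from four of them remains valid. The factorization $M=AB$, the passage to the unit circle, and the final arithmetic are all routine.
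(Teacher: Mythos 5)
Your proposal is correct, and its lower-bound argument takes a genuinely different (though equally elementary) route from the paper's. Both proofs reduce to showing the $4\times 4$ signed identity is not realizable in rank $2$, and both get the upper bound the same way you do (VC dimension one plus Theorem~\ref{thm:VC1}); the difference is in how the planar contradiction is extracted. The paper works dually: it fixes the four half-planes coming from the rows, observes that the four lines through the origin cut $\R^2$ into eight cones whose sign signatures form a distance-preserving cycle of length eight in the $4$-cube, and notes that the four columns of the signed identity have pairwise Hamming distance two, while no three (let alone four) vertices of such a cycle can be pairwise at distance two. You instead work primally: you place the four column directions on the unit circle and convert each row constraint into the statement that one point is strictly separated from the rest by a line through the origin, yielding the arc inequalities $a_2+a_3<\pi$ and $a_4+a_1<\pi$, which sum to the contradiction $2\pi<2\pi$. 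Your version has the advantage of making the strictness bookkeeping explicit (no $\hat x_j$ on a separating line, extra points only tighten the constraints), and it transparently explains why $N=3$ escapes (three arc inequalities do not force a contradiction), whereas the paper's version packages the same geometry combinatorially via the cell structure of the line arrangement and tacitly assumes the four lines are distinct (which holds because the signed identity has no equal or antipodal rows). The key step you flagged as the main obstacle does go through exactly as you describe: since $\hat x_1$ lies strictly on the positive side, the other three lie in the open opposite semicircle, and because the cyclic order is $\hat x_1,\hat x_2,\hat x_3,\hat x_4$, the arc from $\hat x_2$ through $\hat x_3$ to $\hat x_4$ is contained in that open semicircle, giving the strict bound.
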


Next, we consider the case $d>1$,
starting with lower bounds on $f(N,d)$.
As mentioned above, two lower bounds were previously known: 
\cite{DBLP:conf/compgeom/AlonHW87} showed that
$f(N,2) \geq \Omega(\log N)$.
\cite{DBLP:journals/jmlr/Ben-DavidES02} showed that 
$f(N,d) \geq \omega(N^{1-\frac{2}{d}-\frac{1}{2^{d/2}}})$, for every fixed $d$, 
which provides a nontrivial result only for $d\geq4$. 
We prove the following stronger lower bound.
\begin{theorem}
\label{t42}
The following lower  bounds on $f(N,d)$ hold:
\begin{enumerate}
\item
$f(N,2) \geq \Omega(N^{1/2}/ \log N)$.
\item
$f(N,3) \geq \Omega(N^{8/15}/\log N)$.
\item
$f(N,4) \geq \Omega(N^{2/3}/\log N)$.
\item
For every fixed $d>4$,
$$
f(N,d) \geq \Omega(N^{1-(d^2+5d+2)/(d^3+2d^2+3d)}/ \log N).
$$
\end{enumerate}
\end{theorem}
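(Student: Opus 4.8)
The plan is to obtain all four lower bounds from a single probabilistic construction, combined with Warren's theorem from real algebraic topology (as advertised in the abstract). The high-level strategy is to \emph{fix} the target sign rank $k$ and a candidate dimension count, randomly sample a sign matrix supported on a structured family (so that its VC dimension is automatically bounded by the desired $d$), and then show that with positive probability no rank-$k$ realization exists. The VC dimension control comes from choosing the random matrix inside a ``low-complexity'' family --- e.g.\ rows that are indicator-type functions of ranges cut out by a bounded number of algebraic surfaces, or more combinatorially, rows drawn from a maximum class of VC dimension $d$ --- so the upper bound $VC(S)\le d$ is built in by design. The work is then entirely in the probabilistic lower bound on sign rank.

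For the sign-rank lower bound I would use the Warren-type counting argument: if an $N\times N$ sign matrix $S$ has sign rank at most $k$, then $S$ is the sign pattern of a matrix $M=U V$ with $U\in\R^{N\times k}$, $V\in\R^{k\times N}$, so $S$ is determined by the signs of $Nk+kN$ real parameters evaluated at $N^2$ bilinear forms. Warren's theorem bounds the number of distinct sign patterns of a system of $m$ real polynomials of degree at most $D$ in $n$ variables by $(O(mD/n))^n$; here $m=N^2$, $D=2$, $n=2Nk$, giving at most $2^{O(Nk\log N)}$ sign matrices of sign rank $\le k$. The matching step is to show that the random construction realizes strictly more than $2^{O(Nk\log N)}$ \emph{distinct} sign matrices (or, more carefully, that a random one avoids the low-sign-rank set with positive probability), which forces $k\ge \Omega(\text{(number of random bits)}/(N\log N))$. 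So the number of independent random bits in the construction --- equivalently the log of the number of distinct matrices of VC dimension $\le d$ produced --- directly yields the exponent of $N$ in each case.

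Thus each of the four bounds reduces to a counting estimate: how many sign matrices of VC dimension at most $d$ can one pack into a random family while keeping genuine entropy? For $d=2$ the moment-curve / maximum-class construction gives roughly $N^{3/2}$ bits, for $d=3$ it gives $\approx N^{1+8/15}$ bits, for $d=4$ it gives $\approx N^{5/3}$ bits, and the general-$d$ formula $1-(d^2+5d+2)/(d^3+2d^2+3d)$ comes from optimizing the analogous incidence/packing bound (this is the place where the Zarankiewicz-type estimates from \cite{DBLP:journals/jmlr/Ben-DavidES02} enter, now pushed through the Warren machinery rather than a direct rank argument). I would carry out the steps in the order: (1) state Warren's theorem and derive the $2^{O(Nk\log N)}$ upper bound on the count of matrices of sign rank $\le k$; (2) describe the random family for general $d$ and verify $VC\le d$ deterministically; (3) lower bound the entropy of the family as a function of $d$; (4) compare, conclude $k=\Omega(N^{\alpha(d)}/\log N)$, and specialize to $d=2,3,4$.

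The main obstacle I anticipate is step (3): making the entropy count both \emph{large enough} to beat $2^{O(Nk\log N)}$ and \emph{honest} about VC dimension --- that is, constructing a family of sign matrices that simultaneously (a) has provably bounded VC dimension, and (b) is rich enough to have near-optimal logarithmic size, with the exponents matching the claimed $8/15$ and $2/3$ and the general rational function. Getting the constant VC bound while packing $N^{1+c(d)}$ essentially-free bits into the rows seems to require the moment-curve construction together with a careful choice of which ranges to randomize over, and verifying the exponent $\alpha(d)=1-(d^2+5d+2)/(d^3+2d^2+3d)$ is an optimization that I expect to be the most delicate bookkeeping. A secondary subtlety is the logarithmic loss: Warren gives $2^{O(Nk\log N)}$ rather than $2^{O(Nk)}$, which is exactly why the bounds lose a $\log N$ factor, so I would make sure the entropy lower bound in step (3) is by a genuine polynomial factor (not merely polylogarithmic) so that the comparison in step (4) survives.
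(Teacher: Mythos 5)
Your outer skeleton is the same as the paper's: Warren's theorem bounds the number of $N\times N$ sign matrices of sign rank at most $r$ by $2^{O(rN\log N)}$ (this is exactly Lemma~\ref{l46}), and one then exhibits $2^{\Omega(N^{1+\alpha(d)})}$ distinct matrices of VC dimension at most $d$, so that some such matrix must have sign rank $\Omega(N^{\alpha(d)}/\log N)$. That part of your plan is sound. The genuine gap is your step (3), which you yourself flag as the delicate part but never actually carry out, and the mechanisms you hint at for it would not work. Drawing the $N$ rows from a fixed maximum class of VC dimension $d$ gives at most roughly $\binom{O(N^d)}{N}=2^{O(dN\log N)}$ matrices, which against the $2^{O(rN\log N)}$ Warren count yields only $r\geq\Omega(d)$ --- trivial. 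The same objection kills any ``parametric'' family (ranges cut out by boundedly many algebraic surfaces, moment-curve point sets, etc.): if each matrix in the family is described by $O(N)$ real parameters, then Warren's theorem itself caps the family at $2^{O(N\log N)}$ matrices, so such families can never supply the $N^{1+\Omega(1)}$ bits of entropy you need. (The moment curve is, in fact, the paper's tool for the \emph{upper} bound, not the lower bound.)

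What is actually needed, and what the paper does, is a purely combinatorial source of entropy together with a monotonicity observation. One takes a dense $0/1$ matrix that avoids a small forbidden all-ones (or ``heavy'') configuration, and notes that \emph{every} matrix obtained from it by turning ones into zeros still has VC dimension at most $d$: e.g.\ VC dimension $3$ forces a $J_{2\times 2}$ all-ones submatrix, so the $J_{2\times2}$-free point--line incidence matrix of a projective plane, with $(1+o(1))N^{3/2}$ ones, yields $2^{(1+o(1))N^{3/2}}$ matrices of VC dimension at most $2$; VC dimension $5$ forces a $J_{3\times3}$, so Brown's construction with $\Omega(N^{5/3})$ ones handles $d=4$; and for $d=3$ and $d>4$ one runs a probabilistic deletion argument (random entries with probability $p$ chosen so that the expected number of ``heavy-dominant'' submatrices is dominated by the number of ones, then delete a few ones per bad submatrix), which is precisely where the exponents $23/15$ and $2-(d^2+5d+2)/(d^3+2d^2+3d)$ come from. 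Your proposal asserts these entropy values but provides neither the constructions nor the argument that flipping entries preserves the VC bound, and without that the exponents in parts 1--4 are unsupported; so as written the proof is incomplete at its central step.
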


To understand part 4 better, notice that
$$\frac{d^2+5d+2}{d^3+2d^2+3d}
= \frac{1}{d} + \frac{3d-1}{d^3+2d^2+3d},$$
which is close to $1/d$ for large $d$.
The proofs are described in Section~\ref{sec:VCdim},
where we also discuss the tightness of our arguments.

What about upper bounds on $f(N,d)$?
It is shown in
\cite{DBLP:journals/jmlr/Ben-DavidES02} that for every matrix in a
certain class of $N \times N$ matrices with constant VC dimension,
the sign rank is at most $O(N^{1/2})$. The proof uses the
connection between sign rank and communication complexity.
However, there is no general upper bound for the sign rank of
matrices of VC dimension $d$ in 
\cite{DBLP:journals/jmlr/Ben-DavidES02}, and the authors 
explicitly mention the absence of such a result.

Here we prove the following upper bounds,
using a concrete embedding 
of matrices with low VC dimension in real space.
\begin{theorem}
\label{t41}
For every fixed $d \geq 2$,
$$
f(N,d) \leq O(N^{1-1/d}).
$$
\end{theorem}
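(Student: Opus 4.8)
The plan is to construct, for an arbitrary $N\times N$ sign matrix $S$ of VC dimension $d$, a real matrix $M$ with $\text{sign}(M)=S$ and $\text{rank}(M)=O(N^{1-1/d})$, built from univariate polynomials evaluated on the moment curve. The point that links the combinatorial hypothesis ``VC dimension $d$'' to low algebraic degree is the existence of an ordering of the columns along which every row changes sign only $O(N^{1-1/d})$ times; such an ordering is supplied by a spanning path with low crossing (stabbing) number.

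First I would reformulate $S$ as a range space: take the $N$ columns as the ground set $X$ and each of the $N$ rows as the subset of $X$ on which it equals $+1$, obtaining a set system $(X,\mathcal R)$ of VC dimension $d$. By the Sauer--Shelah lemma its primal shatter function is $O(m^d)$, and then Welzl's theorem on spanning trees of low crossing number --- together with the standard reduction from spanning trees to spanning paths (an Euler tour of the doubled tree, shortcut to a Hamiltonian path, crosses every range at most twice as often as the tree does) --- produces a Hamiltonian path on $X$ whose crossing number with respect to $\mathcal R$ is $t=O(N^{1-1/d})$. Relabel the columns in the order in which they appear on this path. The crossing-number bound says precisely that for every row $r$ the $\pm1$ vector $(S_{r,1},\dots,S_{r,N})$ has at most $t$ indices $i$ with $S_{r,i}\ne S_{r,i+1}$.

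Next, for each row $r$ let $i_1<\dots<i_k$ (with $k\le t$) be these sign-change positions, and set $p_r(x)=\sigma_r\prod_{\ell=1}^{k}\bigl(x-(i_\ell+\tfrac12)\bigr)$ for a global sign $\sigma_r\in\{\pm1\}$ chosen below. Then $\deg p_r\le t$, $p_r$ vanishes only at the half-integers $i_\ell+\tfrac12$, so $p_r(j)\ne 0$ for every integer $j\in[N]$, and since the sign of $p_r$ is constant between consecutive roots and flips at each of them, the $\pm1$ sequence $(\text{sign}(p_r(1)),\dots,\text{sign}(p_r(N)))$ changes sign exactly at $i_1,\dots,i_k$; hence for the appropriate $\sigma_r$ we get $\text{sign}(p_r(j))=S_{r,j}$ for all $j$. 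Now put $v_j=(1,j,j^2,\dots,j^{t})\in\R^{t+1}$, a point on the moment curve, and let $c_r\in\R^{t+1}$ be the coefficient vector of $p_r$, zero-padded to length $t+1$, so that $p_r(j)=\langle c_r,v_j\rangle$. The matrix $M$ with $M_{r,j}=p_r(j)$ then factors as $M=CV^{\mathsf{T}}$, where $C$ has rows $c_r$ and $V$ has rows $v_j$; therefore $\text{rank}(M)\le t+1$. Since $M$ has no zero entry and $\text{sign}(M)=S$, this gives $\text{sign-rank}(S)\le t+1=O(N^{1-1/d})$ (the additive $1$ is absorbed into the $O(\cdot)$ since $N^{1-1/d}\to\infty$ for $d\ge2$).

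The hard part will be the second step: obtaining a spanning path with crossing number $O(N^{1-1/d})$, \emph{with no spurious logarithmic factor}, for an abstract range space whose shatter function is $O(m^d)$. The tree version is Welzl's (and Chazelle--Welzl's) theorem, but one must invoke its sharp form --- via the appropriate hierarchical/simplicial partition argument --- to avoid losing an $O(\log N)$ factor, and then carry out the tree-to-path conversion with a clean accounting of crossing numbers. Everything else is routine: interpolating a $\pm1$ pattern with few alternations by a low-degree polynomial, and the Vandermonde/moment-curve factorization that bounds the rank.
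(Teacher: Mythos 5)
Your proposal is correct and follows essentially the same route as the paper: Sauer--Shelah gives the polynomial primal shatter function, Welzl's low-crossing spanning tree (sharpened via Haussler's packing bound to remove the logarithm) gives an ordering with $O(N^{1-1/d})$ sign changes per line, and the moment-curve/polynomial factorization converts few sign changes into low rank --- the paper packages the last step as Lemma~\ref{l43} and the ordering step as Lemma~\ref{l44}, permuting rows and counting sign changes along columns rather than the transposed version you use, which it notes is equivalent.
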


In particular, this determines 
$f(N,2)$ up to a logarithmic factor:
$$
\Omega(N^{1/2}/ \log N) \leq f(N,2) \leq O(N^{1/2}).
$$

The above results imply existence of sign matrices
with high sign rank. However, their proofs use
counting arguments and hence do not provide a method
of certifying high sign rank for explicit matrices.
In the next section we show how one 
can derive a lower bound for the sign rank of many explicit matrices.

\subsection{Sign rank and spectral gaps}

Spectral properties of boolean matrices are known to be deeply 
related to their combinatorial structure.
Perhaps the best example is Cheeger's inequality
which relates spectral gaps to combinatorial expansion
\cite{zbMATH03808175,AM0,zbMATH03875335,Al1,zbMATH05302790}.
Here, we describe connections between spectral properties
of boolean matrices and the sign rank of their signed versions.

Proving strong lower bounds on the sign rank of sign matrices
turned out to be a difficult task.
Alon, Frankl, and  R{\"{o}}dl~\cite{DBLP:conf/focs/AlonFR85} 
were the first to prove that 
there are sign matrices with high sign rank,
but they have not provided explicit examples.
Later on, a breakthrough of \cite{DBLP:conf/coco/Forster01}
showed how to prove lower bounds on the sign rank of explicit matrices,
proving, specifically, that Hadamard matrices have high sign rank.
\cite{DBLP:conf/focs/RazborovS08} proved that there is a function 
that is computed by a small depth three boolean circuit, 
but with high sign rank.
It is worth mentioning that no explicit matrix whose sign
rank is significantly larger
than $N^{\frac{1}{2}}$ is known.

We focus on the case of regular matrices,
but a similar discussion can be carried more generally.
A boolean matrix is $\Delta$ regular 
if every row and every column in it has exactly
$\Delta$ ones, and a sign matrix is $\Delta$ regular if
its boolean version is $\Delta$ regular.

An $N \times N$ real matrix $M$ has $N$ singular values
$\sigma_1 \geq \sigma_2 \geq \ldots 
\geq \sigma_N \geq 0$.
The largest singular value of $M$
is also called its spectral norm
$\|M\| = \sigma_1 = \max \{ \|M x\| : \|x\| \leq 1\},$
where $\|x\|^2 = \langle x , x\rangle$
with the standard inner product.
If the ratio $\sigma_2(M)/\|M\|$ is bounded away from one,
or small, we say that $M$ has a spectral gap.

We prove that if $B$ has a spectral gap
then the sign rank of $S$ is high. 
\begin{theorem}
\label{thm:SpecVsSignRank}
Let $B$ be a $\Delta$ regular $N \times N$ boolean matrix
with $\Delta \leq N/2$, and let $S$ be its signed version.
Then,
$$\text{sign-rank}(S) \geq \frac{\Delta}{\sigma_2(B)}.$$
\end{theorem}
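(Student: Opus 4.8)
The plan is to follow Forster's argument, adapted to exploit regularity. Recall Forster's theorem: if $M$ is a real $N \times N$ matrix with $\mathrm{sign}(M) = S$ and $r = \mathrm{rank}(M)$, then one can find a matrix $M'$ with the same sign pattern, still of rank $r$, such that the columns of $M'$ are in ``radial isotropic position''; this forces $\sum_j \|M'_{\cdot,j}\|^{-2} (M'_{\cdot,j})(M'_{\cdot,j})^{T} = \frac{N}{r} I_r$ after appropriately scaling columns to unit length. The standard consequence is the quantitative bound
\[
\sum_{i,j} |\langle u_i, v_j\rangle| \;\geq\; \frac{N^2}{\sqrt{r}},
\]
where $v_j$ are the (rescaled, unit) columns and $u_i$ are unit vectors realizing the row sign constraints, i.e.\ $\mathrm{sign}(\langle u_i, v_j \rangle) = S_{i,j}$ for all $i,j$. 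So first I would recall/cite this inequality as the black box coming from Forster's method.

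The second step is to bound $\sum_{i,j}|\langle u_i, v_j\rangle|$ from above using the regularity of $B$. Write $S = 2B - J$. For each fixed row $i$, split the columns into the $\Delta$ columns $j$ with $S_{i,j} = +1$ (call this set $R_i$, with $|R_i| = \Delta$) and the $N - \Delta$ columns with $S_{i,j} = -1$. Then $\langle u_i, v_j \rangle$ is $\geq 0$ on $R_i$ and $\leq 0$ off it, so
\[
\sum_j |\langle u_i, v_j\rangle| \;=\; \sum_{j \in R_i} \langle u_i, v_j \rangle \;-\; \sum_{j \notin R_i} \langle u_i, v_j\rangle \;=\; 2\sum_{j \in R_i}\langle u_i, v_j\rangle \;-\; \Big\langle u_i, \sum_j v_j\Big\rangle.
\]
The term $\langle u_i, \sum_j v_j\rangle$ I would like to discard; if $\sum_j v_j = 0$ it vanishes, and in general one can absorb it — this is a point to handle carefully, perhaps by passing to a slightly larger dimension (append a coordinate) or by noting it only helps/hurts by a controlled amount; I will flag this below. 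Ignoring it, $\sum_j |\langle u_i, v_j\rangle| \leq 2 \sum_{j \in R_i}\langle u_i, v_j\rangle = 2\langle u_i, \sum_{j\in R_i} v_j\rangle \leq 2\|\sum_{j \in R_i} v_j\|$ since $\|u_i\|=1$. Summing over $i$ and applying Cauchy–Schwarz,
\[
\sum_{i,j}|\langle u_i, v_j\rangle| \;\leq\; 2\sum_i \Big\|\sum_{j \in R_i} v_j\Big\| \;\leq\; 2\sqrt{N}\,\Big(\sum_i \Big\|\sum_{j\in R_i} v_j\Big\|^2\Big)^{1/2}.
\]

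The third step is to recognize $\sum_i \|\sum_{j \in R_i} v_j\|^2$ in terms of $B$ and the Gram matrix of the $v_j$'s. Expanding, $\sum_i \|\sum_{j \in R_i} v_j\|^2 = \sum_i \sum_{j, j' \in R_i} \langle v_j, v_{j'}\rangle = \sum_{j,j'} (B^T B)_{j,j'} \langle v_j, v_{j'}\rangle = \mathrm{tr}(B^T B \, G)$ where $G = (\langle v_j, v_{j'}\rangle)$ is the Gram matrix, which is PSD of rank $\leq r$ with all diagonal entries $1$, so $\mathrm{tr}(G) = N$. Since $B$ is $\Delta$-regular, $B^T B$ is PSD with $\|B^T B\| = \Delta^2$ (top eigenvalue, eigenvector all-ones, using $\Delta \le N/2$ so this is genuinely the largest singular value squared — actually we just need $\|B\| = \Delta$, which holds for any $\Delta$-regular nonnegative matrix). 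Hence $\mathrm{tr}(B^T B \, G) \leq \|B^T B\| \cdot \mathrm{tr}(G) = \Delta^2 N$ — but this crude bound only gives $\mathrm{sign\text{-}rank}(S) \geq 1$, which is useless. The fix, and the real heart of the argument, is to subtract off the top (all-ones) direction: split $B = \frac{\Delta}{N} J + E$ where $E = B - \frac{\Delta}{N}J$ has spectral norm $\sigma_2(B)$ (the all-ones vector is a singular vector of $B$ with value $\Delta$ for both $B$ and $B^T$ since $B$ is regular, so removing it leaves second singular value). Correspondingly $B^T B$ restricted to the complement of the all-ones vector has norm $\sigma_2(B)^2$. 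Then one should argue that the contribution of the all-ones direction to $\sum_{j,j'}(B^TB)_{j,j'}\langle v_j,v_{j'}\rangle$ is exactly $\frac{\Delta^2}{N}\|\sum_j v_j\|^2$, which is precisely the term we were trying to kill in step two — so the honest version of the argument keeps the $\langle u_i, \sum_j v_j\rangle$ terms and shows they cancel the rank-one part, leaving $\sum_i\|\sum_{j\in R_i}v_j - \frac{\Delta}{N}\sum_j v_j\|^2 \le \sigma_2(B)^2 \cdot N$. Combining: $\frac{N^2}{\sqrt r} \le \sum_{i,j}|\langle u_i,v_j\rangle| \le 2\sqrt N \cdot \sqrt{\sigma_2(B)^2 N} = 2 N \sigma_2(B)$, giving $\sqrt r \ge \frac{N}{2\sigma_2(B)}$ and hence $r \ge \frac{N^2}{4\sigma_2(B)^2}$ — which is even better than claimed, so I've been sloppy somewhere; more carefully, tracking constants in Forster's inequality and the $+1/-1$ asymmetry (the ``$2$'' and the discarded terms) should yield exactly $\mathrm{sign\text{-}rank}(S) \ge \Delta/\sigma_2(B)$.

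The main obstacle is step two/three: correctly handling the centering so that the rank-one (all-ones / average) component is removed rather than dominating the estimate. Concretely, I expect the clean route is: apply Forster to get vectors in isotropic position, observe that the quantity $\sum_{i,j} S_{i,j}\langle u_i, v_j\rangle = \sum_{i,j}|\langle u_i,v_j\rangle| \ge N^2/\sqrt r$; rewrite $\sum_{i,j} S_{i,j}\langle u_i,v_j\rangle = \langle U, S V^T\rangle_{HS}$ (Hilbert–Schmidt / trace inner product) where $U$ has rows $u_i$, $V$ has rows $v_j$; bound this by $\|U\|_{HS} \cdot \|SV^T\|_{op}$-type inequalities $= \sqrt N \cdot \|S V^T\|_{?}$ and then relate $\|SV^T\|$ to $\|S\|$ or better to the second singular value of $B$ after noting $S = 2B - J$ and that $V^T$ annihilates — or nearly annihilates — the all-ones vector only if we arrange $\sum_j v_j = 0$. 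Arranging $\sum_j v_j \approx 0$ is exactly where care is needed; one standard trick is that Forster's isotropic position already gives $\sum_j \frac{v_j v_j^T}{\|\cdot\|^2}$ proportional to identity but says nothing about $\sum_j v_j$, so instead I would bound $\|SV^T\|_{op} \le \|(2B-J)V^T\|_{op} \le 2\|BV^T\|_{op} + \|JV^T\|_{op}$ and handle $\|B V^T\|$ via $B = \frac\Delta N J + E$: $\|BV^T\| \le \frac\Delta N \|JV^T\| + \sigma_2(B)\|V^T\|_{op}$, with $\|V^T\|_{op} \le \sqrt{\|G\|} \cdot(\ldots)$ and the $J$-terms being rank one hence controlled by $\|\sum_j v_j\|$. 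I would then show these rank-one leftovers are lower order (or exactly cancel against a matching rank-one piece of $S$), isolating $2\sigma_2(B)$ as the effective operator norm and concluding $N^2/\sqrt r \le \sqrt N \cdot 2\sigma_2(B) \cdot \sqrt N$, i.e.\ $\sqrt r \ge N/(2\sigma_2(B))$; reconciling this with the stated bound $\Delta/\sigma_2(B)$ — which is weaker when $\Delta < N/2$ — suggests the intended argument actually produces the stated (possibly not optimal) form after the asymmetry between $+1$ and $-1$ entries is accounted for honestly, and I would follow whichever bookkeeping the authors use.
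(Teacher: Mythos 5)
Your high-level plan --- Forster's argument combined with the spectral decomposition of $B$ that isolates the all-ones direction --- is the right one, and you correctly diagnose that the entire difficulty is the rank-one component. But the argument as sketched does not close, and the step you keep deferring is exactly where it fails. The vector you need to control in row $i$ is $\sum_j S_{i,j}v_j = 2\bigl(\sum_{j\in R_i}v_j - \tfrac12\sum_j v_j\bigr)$: the sign structure of $S=2B-J$ forces a centering at $\tfrac12$, whereas the centering that regularity hands you for free (via $E=B-\tfrac{\Delta}{N}J$, $\|E\|=\sigma_2(B)$) is at $\tfrac{\Delta}{N}$. The discrepancy is a rank-one term of size $(1-\tfrac{2\Delta}{N})\|\sum_j v_j\|$ per row. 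It does not cancel against $\sum_i\langle u_i,\sum_j v_j\rangle$ (that is a different quantity, involving the $u_i$'s rather than the Gram matrix of the $v_j$'s), and it is not lower order: isotropic position controls $\sum_j v_jv_j^{T}$ but says nothing about $\sum_j v_j$, giving only $\|\sum_j v_j\|\le N/\sqrt{k}$, so the leftover contributes up to $(1-\tfrac{2\Delta}{N})N^2/\sqrt{k}$ to your upper bound on $D$, which swamps the lower bound $N^2/k$ whenever $\Delta$ is much smaller than $N/2$. (Separately, the black box should read $\sum_{i,j}|\langle u_i,v_j\rangle|\ge N^2/r$, not $N^2/\sqrt{r}$; that is the source of the ``too good'' bound you noticed.)

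The missing idea is a reweighting you never perform. The paper uses the generalization of Forster's theorem (Theorem~\ref{thm:Forster}): the sign rank is at least $N/\|M\|$ for \emph{any} real matrix $M$ with $M_{i,j}S_{i,j}\ge 1$, not just for $M=S$. One then applies this to $M=\tfrac{N}{\Delta}B-J$ instead of to $S=2B-J$. The hypothesis $\Delta\le N/2$ guarantees $M_{i,j}S_{i,j}\ge 1$; regularity makes the all-ones vector an exact kernel vector of $M$, so the two centerings above coincide by construction, there is no rank-one leftover, and $\|M\|=\tfrac{N}{\Delta}\sigma_2(B)$ falls out in three lines, giving $N/\|M\|=\Delta/\sigma_2(B)$. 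This also explains why the stated bound carries a $\Delta$ rather than an $N/2$: rescaling $B$ by $N/\Delta$ to annihilate the $J$-component costs exactly the factor $\Delta/N$.
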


In many cases a spectral gap for $B$ implies that
it has pseudorandom properties.
This theorem is another manifestation of this phenomenon
since random sign matrices have high sign rank (see~\cite{DBLP:conf/focs/AlonFR85}).

The theorem above provides a non trivial lower bound on the sign rank of $S$.
There is a non trivial upper bound as well.
The sign rank of a $\Delta$ regular sign matrix is at most $2\Delta+1$.
Here is a brief explanation of this upper bound
(see \cite{DBLP:conf/focs/AlonFR85} for a more detailed proof).
Every row $i$ in $S$ has at most $2 \Delta$
sign changes (i.e.\ columns $j$ so that $S_{i,j} \neq S_{i,j+1}$).
This implies that for every $i$,
there is a real univariate polynomial $G_i$ of degree 
at most $2\Delta$
so that $G_i(j) S_{i,j} > 0$ for all $j \in [N] \subset \R$.
To see how this corresponds to sign rank at most $2\Delta+1$, recall that
evaluating a polynomial $G$ of degree $2\Delta$ on a point $x \in \R$
corresponds to an inner product 
over $\R^{2\Delta +1}$ between the vector of coefficients of $G$,
and the vector of powers of $x$.

Our proof of Theorem~\ref{thm:SpecVsSignRank}
and its limitations are discussed in detail
in Section~\ref{sec:LBproof}.

\section{Applications}
\subsection{Learning theory}

\subsubsection*{Universality of linear classifiers}
{Linear classifiers} have been central in the study of machine learning since the introduction of the Perceptron algorithm in the 50's~\cite{perceptron} and Support Vector Machines (SVM) in the 90's~\cite{DBLP:conf/colt/BoserGV92,DBLP:journals/ml/CortesV95}.
The rising of kernel methods in the 90's~\cite{DBLP:conf/colt/BoserGV92,DBLP:journals/neco/ScholkopfSM98}
enabled reducing many learning problems to the framework of halfspaces, making linear classifiers a central algorithmic tool.

These methods use the following two-step approach. First,
embed the hypothesis class\footnote{In this context we use the more common term ``hypothesis class'' instead of ``matrix.''} in halfspaces of an Euclidean space (each point corresponds to a vector and for every hypothesis $h$, the vectors corresponding to $h^{-1}(1)$ and the vectors corresponding to $h^{-1}(-1)$ are separated by a hyperplane). Second, apply a learning algorithm for halfspaces.  

If the embedding is to a low dimensional space then a good generalization rate is implied.
For embeddings to large dimensional spaces, SVM theory offers an alternative parameter, namely the margin\footnote{The margin of the embedding is the minimum over all hypotheses $h$ of the distance between the convex hull of the vectors corresponding to $h^{-1}(1)$ and the convex hull of the vectors corresponding to $h^{-1}(-1)$}. Indeed, a large margin also implies a good generalization rate.
On the other hand, any embedding with a large margin can be projected to a low dimensional space using standard dimension reduction arguments~\cite{JL,DBLP:conf/focs/ArriagaV99,DBLP:journals/jmlr/Ben-DavidES02}.

Ben-David, Eiron, and Simon~\cite{DBLP:journals/jmlr/Ben-DavidES02} utilized it to argue that
``\ldots any universal learning machine, which transforms data to a Euclidean space and then applies linear (or large margin) classification, cannot preserve good generalization bounds in general.'' Formally, they showed that: 
For any fixed $d>1$, most hypothesis classes 
$C\subseteq\{\pm 1\}^N$ of VC dimension $d$ have sign-rank of $N^{\Omega(1)}$.
As discussed in Section~\ref{sec:mainseparation}, 
Theorem~\ref{t42} quantitatively improves over their results.

In practice, linear classifiers are widely used in a variety 
of applications including handwriting recognition, 
image classification, medical science, bioinformatics, and more. 
The practical usefulness of linear classifiers and the argument
of Ben-David, Eiron, and Simon manifest a gap between practice and theory that seems worth studying.
We next discuss how Theorem~\ref{t41}, which provides a non-trivial upper bound on the sign rank,
can be interpreted as a theoretical evidence which supports the practical usefulness of linear classifiers.
Let $C\subseteq\{\pm 1\}^X$ be a hypothesis class, and let $\gamma>0$.
We say that $C$ is {\em $\gamma$-weakly represented by halfspaces}
if for every finite $Y\subseteq X$, the sign rank
of $C|_Y$ is at most $O(|Y|^{1-\gamma})$.
In other words, there exists an embedding of $Y$
in $\mathbb{R}^k$ with $k=O(|Y|^{1-\gamma})$ such that each hypothesis
in $C|_Y$ corresponds to a halfspace in the embedding.
Theorem~\ref{t41} shows that any class $C$ 
is $\gamma$-weakly represented by halfspaces where $\gamma$ depends only on its VC dimension.
{Weak representations can be thought of as providing a compressed
representation of $C|_Y$ using half-spaces in a dimension that is sublinear in $|Y|$.
Such representations imply learnability;}
indeed, every $\gamma$-weakly represented class $C$ is learnable,
as the VC dimension of $C$ is bounded from above by some function of of $\gamma$.
While these quantitative relations between the VC dimension and $\gamma$ may be rather loose, they show that in principle, any learnable class has a weak representation by halfspaces which certifies its learnability. 

\subsubsection*{Maximum classes with large sign rank}
Let $C\subseteq\{\pm 1\}^N$ be a class with VC dimension $d$.
The class $C$ is called maximum if it meets the Sauer-Shelah's bound~\cite{Sa} with equality\footnote{Maximum classes are distinguished from maximal classes: A maximum class has the largest possible size among all classes of VC dimension $d$, and a maximal class is such that for every sign vector $v\notin C$, 
if $v$ is added to $C$ then the VC dimension is increased.}. That is, $|C|=\sum_{i=0}^{d}{N \choose i}$.  Maximum classes were studied in different contexts such as 
machine learning, geometry, and combinatorics (e.g.~\cite{BR95,FW95,Welzl,Dress,ShatNews,KW07,Mor12,RR3,RRB14}).

There are several known examples of maximum classes. A fairly simple one
is the hamming ball of radius $d$, i.e., the class of all vectors with weight at most $d$. Another set of examples relates to the sign rank:
Let $H$ an arrangement of hyperplanes in $\mathbb{R}^d$.
These hyperplanes cut $\mathbb{R}^d$ into cells; the connected components of $\mathbb{R}^d\setminus \left(\bigcup_{h\in H}{h}\right)$. Each cell $c$ is associated with a sign vector
$v_c\in \{\pm 1\}^H$ which describes the location of the cell relative to each of the hyperplanes.
See Figure~\ref{fig:hyper} for a planar arrangement.
The sign rank of such a class is at most $d+1$. 
It is known (see e.g.~\cite{Welzl}) that if the hyperplanes are in general position then the sign vectors of the cells form a maximum class of VC dimension $d$. 

\begin{figure}
\label{fig:hyper}
\begin{center}
\includegraphics[width=.33\textwidth]{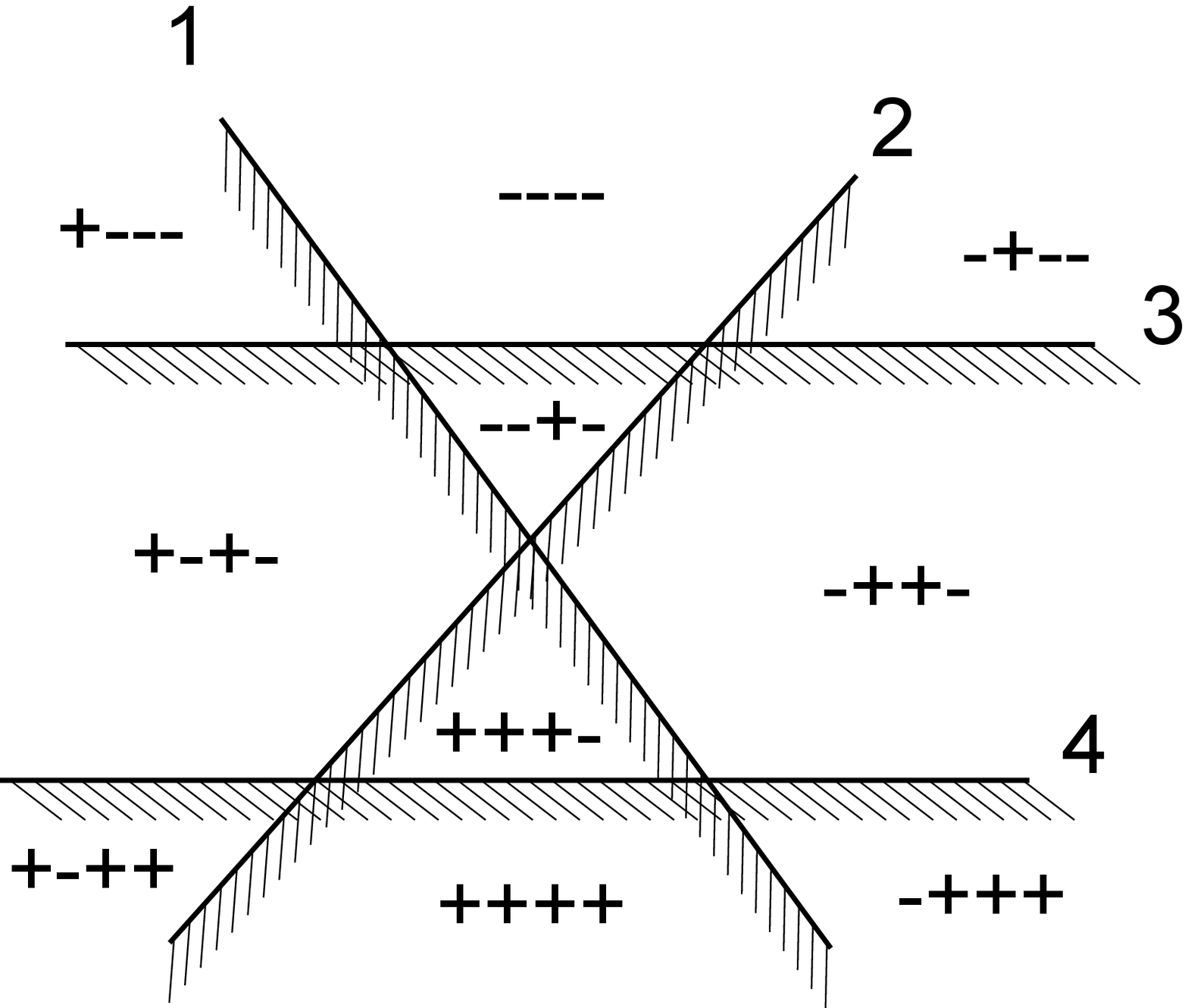}
\end{center}
\caption{An arrangement of lines in the plane and the corresponding cells.}
\end{figure}

G{\"{a}}rtner and Welzl~\cite{Welzl} gave a combinatorial characterization of  maximum classes constructed using generic halfspaces. As an application of their characterization they note that hamming ball of radius $d$ is a maximum class that can not be realized this way. By Lemma~\ref{l43}, however, the hamming ball of radius $d$ has sign rank at most $2d+1$
(it is in fact exactly $2d+1$).
It is therefore natural to ask whether every maximum class 
has sign rank which depends only on $d$. A similar question was also asked by~\cite{DBLP:journals/jmlr/Ben-DavidES02}.  
Theorem~\ref{thm:projint} in Section~\ref{sec:maximum} gives a negative answer to this question, even when $d=2$ (when $d=1$, by Theorem~\ref{thm:VC1} the sign rank is at most $3$).

In machine learning, maximum classes were studied extensively in the context of sample compression schemes.
A partial list of works in this context includes~\cite{FW95,KW07,RR3,RRB14,MW15,DBLP:conf/alt/DoliwaSZ10}.
\cite{RR3} constructed an unlabeled sample compression scheme for maximum classes.
Their scheme uses an approach suggested by~\cite{KW07} and their analysis 
resolved a conjecture from~\cite{KW07}.
A crucial part in their work is establishing the existence of an embedding of any maximum class of VC dimension $d$ in an arrangement of piecewise-linear hyperplanes in $\mathbb{R}^d$. Theorem~\ref{thm:projint} below shows that even for VC dimension $2$, there are maximum classes $C\subseteq\{\pm 1\}^N$ of sign rank $\Omega(N^{1/2}/\log{N})$. Thus, in order to make the piecewise-linear arrangement in $\mathbb{R}^2$ linear the dimension of the space must significantly grow to  $\Omega(N^{1/2}/\log{N})$.

\subsection{Explicit examples}
\label{sec:IntroFinGeo}
The spectral lower bound on sign rank 
gives many explicit examples of
matrices with high sign rank, which come from
known constructions of expander graphs and combinatorial designs. 
A rather simple such family of examples
is finite projective geometries.

Let $d \geq 2$ and $n \geq 3$.
Let $P$ be the set of points in a $d$ dimensional projective
space of order $n$, and let $H$ be the set of hyperplanes in the space.
For $d=2$, this is just a projective plane with points and lines.
It is known (see, e.g., \cite{zbMATH01382771}) that
$$|P|=|H|=N_{n,d} := n^d+n^{d-1}+ \ldots +n+1 = \frac{n^{d+1}-1}{n-1}.$$ 
Let $A\in\{\pm 1\}^{P \times H}$ be the signed point-hyperplane incidence matrix:
$$A_{p,h} = \left\{
\begin{array}{ll}
1 & p  \in h ,\\
-1 & p \not \in h .\\
\end{array} \right.$$

\begin{theorem}
\label{thm:projective}
The matrix $A$ is $N \times N$ 
with $N = N_{n,d}$, its VC dimension is $d$,
and its sign rank is larger than 
$$\frac{n^d -1}{n^{\frac{d-1}{2}}(n-1)} 
\geq N^{\frac{1}{2}-\frac{1}{2d}}.$$
\end{theorem}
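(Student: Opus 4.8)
The statement bundles three claims — the size of $A$, its VC dimension, and the sign rank lower bound — and I would establish them in that order, the last being an instance of Theorem~\ref{thm:SpecVsSignRank}. The size is immediate: by the stated point/hyperplane count, $A$ has $|P|=|H|=N_{n,d}$ rows and columns.

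For the VC dimension I would pass to the linear-algebraic model of $PG(d,n)$: projective points are the $1$-dimensional subspaces $[x]$ with $x\in\F_n^{d+1}\setminus\{0\}$, hyperplanes are the sets $h_a=\{[x]:\langle a,x\rangle=0\}$ with $a\in\F_n^{d+1}\setminus\{0\}$, and incidence is orthogonality. The lower bound $VC(A)\ge d$ comes from exhibiting $d$ shattered columns: the hyperplanes $h_{e_1},\dots,h_{e_d}$ (the $e_i$ being standard basis vectors) are shattered, since for any $T\subseteq[d]$ the point $[x]$ with $x_i=0$ for $i\in T$, $x_i=1$ for $i\in[d]\setminus T$, and $x_{d+1}=1$ realizes exactly the pattern $T$. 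For the upper bound $VC(A)\le d$: if $d+1$ columns $h_{a_1},\dots,h_{a_{d+1}}$ were shattered, then either $a_1,\dots,a_{d+1}$ are linearly independent — in which case they span $\F_n^{d+1}$ and the all-ones pattern, which would require a nonzero vector orthogonal to all of them, cannot be realized — or they are linearly dependent, in which case some $a_j$ lies in the span of the remaining $a_i$, and the pattern $[d+1]\setminus\{j\}$, which would require a vector orthogonal to all $a_i$ with $i\ne j$ but not to $a_j$, cannot be realized. Either way we reach a contradiction, so $VC(A)=d$.

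For the sign rank, let $B$ be the boolean version of $A$. Every point of $PG(d,n)$ lies on exactly $N_{n,d-1}=(n^d-1)/(n-1)$ hyperplanes and every hyperplane contains exactly this many points, so $B$ is $\Delta$-regular with $\Delta=N_{n,d-1}$, and $\Delta\le N/2$ since $N/\Delta=(n^{d+1}-1)/(n^d-1)\ge 2$ for $n\ge 3$. The key structural input is that two distinct points lie on a unique line and that a fixed line is contained in exactly $\mu:=N_{n,d-2}$ hyperplanes; hence $BB^T=(\Delta-\mu)I+\mu J$. Its largest eigenvalue is $\Delta^2$, with the all-ones vector as eigenvector (since $B$ and $B^T$ have all row sums equal to $\Delta$), and each of its remaining eigenvalues equals $\Delta-\mu=N_{n,d-1}-N_{n,d-2}=n^{d-1}$; therefore $\sigma_1(B)=\Delta$ and $\sigma_2(B)=n^{(d-1)/2}$. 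Theorem~\ref{thm:SpecVsSignRank} then gives $\text{sign-rank}(A)\ge\Delta/\sigma_2(B)=(n^d-1)/\bigl(n^{(d-1)/2}(n-1)\bigr)$, and since this ratio is never an integer (a short number-theoretic check) while sign rank is, the inequality is in fact strict. The final comparison $(n^d-1)/\bigl(n^{(d-1)/2}(n-1)\bigr)\ge N^{1/2-1/(2d)}$ is elementary: raising both sides to the power $2d$ and using $n^d-1=(n-1)N_{n,d-1}$, it reduces to $N_{n,d-1}^{2d}\ge n^{d(d-1)}N^{d-1}$, which follows from $N<(n+1)N_{n,d-1}$, $N_{n,d-1}\ge n^{d-2}(n+1)$, and $(n+1)^2\ge n^2$.

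The step I expect to require the most care is the VC-dimension upper bound, because it must handle arrangements of $d+1$ hyperplanes that are not in general position; the linear-algebraic dichotomy between ``$a_1,\dots,a_{d+1}$ form a basis'' and ``$a_1,\dots,a_{d+1}$ are linearly dependent'' is precisely what disposes of all cases simultaneously. The spectral half is essentially a computation once the point-hyperplane incidence structure is recognized as a $2$-design — the only mild points are the inequality $\Delta\le N/2$ needed to invoke Theorem~\ref{thm:SpecVsSignRank} and the integrality remark promoting ``$\ge$'' to ``$>$''.
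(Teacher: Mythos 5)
Your proof is correct and follows essentially the same route as the paper: it establishes the VC dimension directly from the linear algebra of $PG(d,n)$, and obtains the sign-rank bound by computing $BB^T=n^{d-1}I+N_{n,d-2}J$, hence $\Delta=N_{n,d-1}$ and $\sigma_2(B)=n^{(d-1)/2}$, and invoking Theorem~\ref{thm:SpecVsSignRank}. The only differences are that you spell out details the paper leaves implicit (the check $\Delta\leq N/2$, strictness via non-integrality of $\Delta/\sigma_2$, and the final comparison with $N^{\frac12-\frac{1}{2d}}$), all of which are fine.
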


The theorem follows from known properties
of projective spaces (see Section~\ref{sec:PS}).
A slightly weaker (but asymptotically equivalent) lower bound on the sign rank of $A$ was given by~\cite{DBLP:conf/fsttcs/ForsterKLMSS01}.

The sign rank of $A$ is at most $2N_{n,d-1}+1=O(N^{1-\frac{1}{d}})$, due to the observation in \cite{DBLP:conf/focs/AlonFR85} mentioned above. To see this, note that every point in the projective space is incident to $N_{n,d-1}$ hyperplanes. 

Other explicit examples come from spectral graph theory.
Here is a brief description of matrices 
that are even more restricted than having VC dimension $2$
but have high sign rank; 
no $3$ columns in them have more than $6$ distinct projections. 
An $(N,\Delta, \lambda)$-graph is a $\Delta$ regular graph on
$N$ vertices so that the absolute value of every eigenvalue of
the graph besides the top one is at most $\lambda$. There are 
several known constructions of $(N,\Delta,\lambda)$-graphs for which
$\lambda \leq O(\sqrt {\Delta})$, that do not contain short
cycles. Any such graph with $\Delta \geq N^{\Omega(1)}$ provides
an example with sign rank at least $N^{\Omega(1)}$, and if there is
no cycle of length at most $6$ then in the sign matrix we have at
most $6$ distinct projections on any set of $3$ columns.

\subsubsection{Maximum classes}\label{sec:maximum}

Let $P$ be the set of points in a projective plane of order $n$ and 
let $L$ be the set of lines in it. Let $N=N_{n,2}=|P|=|L|$.
For each line $\ell \in L$, fix some linear order on the points in $\ell$. 
A set $T\subset P$ is called an interval if $T\subseteq\ell$ for some line
$\ell\in L$, and $T$ forms an interval with respect to the order we fixed on $\ell$.

\begin{theorem}\label{thm:projint}
The class
$R$ of all intervals is a maximum class of VC dimension $2$.
Moreover, there exists a choice of linear orders for the lines in $L$
such that the resulting $R$ has sign rank $\Omega(N^{1/2}/ \log N)$.
\end{theorem}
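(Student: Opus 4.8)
The first assertion I would prove by a direct count together with a short shattering argument. Every interval of size $\ge 2$ contains two points, which span a unique line, so it is an interval of that line's order and of no other; hence intervals of size $\ge 2$ on distinct lines are distinct, and each of the $N$ lines carries exactly $\binom{n+1}{2}$ of them (one per unordered pair of endpoints). Adding the $N$ singletons and the empty set, and using the identity $N\binom{n+1}{2}=\tfrac12(n^2+n+1)\,n(n+1)=\binom{N}{2}$ (valid since $N-1=n(n+1)$), gives $|R|=1+N+\binom{N}{2}$, which is the Sauer--Shelah bound for VC dimension $2$; so it remains only to check $VC(R)\le 2$. If three points $a,b,c$ were shattered, then the pattern containing all three needs an interval $\supseteq\{a,b,c\}$, forcing $a,b,c$ to be collinear; let $b$ be the one that lies between the other two in the common line's order. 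Any interval of $R$ containing $a$ and $c$ has size $\ge 2$, hence lies on that line, hence is an interval of its order spanning the positions of $a$ and $c$, hence contains $b$; so the pattern $\{a,c\}$ is unrealizable, a contradiction. Thus $VC(R)=2$ and $R$ is maximum.

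For the sign-rank bound the plan is to choose the orders so that a submatrix of the sign matrix of $R$ is governed by the spectral lower bound of Theorem~\ref{thm:SpecVsSignRank}. Since sign rank is monotone under passing to submatrices and is unchanged by deleting, permuting, or duplicating columns, it suffices to exhibit a boolean matrix $B$ that is $\Delta$-regular, whose signed version — after such column operations — is a submatrix of the sign matrix of $R$, and for which $\Delta/\sigma_2(B)$ is as large as possible. The natural candidate: on each line $\ell$ make the chosen order begin with a ``half-line'' $S_\ell$ of $\lfloor(n+1)/2\rfloor$ points, and let the rows of $B$ be the indicator vectors of the $S_\ell$ (these are intervals, hence genuine rows of $R$), the columns being the points of $P$ (optionally each point-column is replaced by the set of flags through it, which only duplicates columns and raises the row degree, leaving the sign rank unchanged). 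Then $B$ is exactly $\Delta$-regular on the rows with $\Delta=\lfloor(n+1)/2\rfloor=\Theta(N^{1/2})$.

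To control $B$ I would take the orders independently and uniformly at random. Then with high probability: (i) each point $p$ lies in $S_\ell$ for $\tfrac{n+1}{2}\pm O(\sqrt{n\log N})$ of the lines $\ell\ni p$ — by a Chernoff bound per point and a union bound over $P$ — so $B$ can be made exactly $\Delta$-regular by altering $O(\sqrt{n\log N})$ incidences per line, each alteration replacing a set by an adjacent interval and hence staying inside $R$; and (ii) $\sigma_2(B)$ is small. For (ii), write $B=\tfrac12A+(B-\tfrac12A)$, where $A$ is the full point--line incidence matrix: the incidence graph of a projective plane of order $n$ is bipartite and $(n+1)$-regular with nontrivial singular values $\sqrt n$, so $\sigma_2(\tfrac12A)\le\tfrac12\sqrt n$, while $B-\tfrac12A$ is a random $\{0,\pm\tfrac12\}$ matrix supported on the $(n+1)$-regular incidence pattern (independent across lines) and hence has spectral norm $O(\sqrt{n\log N})$ with high probability by a standard matrix-concentration estimate. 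Feeding these parameters into Theorem~\ref{thm:SpecVsSignRank} and invoking the monotonicity of sign rank under submatrices and column duplication then yields a lower bound on the sign rank of $R$.

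The delicate point — which I expect to be the main obstacle — is the quantitative strength of the bound. The matrix $B$ above is $\Delta$-regular with $\Delta=\Theta(N^{1/2})$, but any $\Delta$-regular boolean matrix whose degree is at most half its order has $\sigma_2\ge\sqrt{\Delta/2}$ (from $\sum_{i\ge2}\sigma_i^2=\Delta(M-\Delta)$ spread over $M-1$ terms), so $\Delta/\sigma_2$ is capped at $\sqrt{2\Delta}=\Theta(N^{1/4})$ — the same ceiling that already applies to the point--line incidence matrix itself, so a straightforward application of Theorem~\ref{thm:SpecVsSignRank} cannot exceed $\Theta(N^{1/4})$. Reaching sign rank $\Omega(N^{1/2}/\log N)$ therefore requires more than the regular form of the spectral bound: one must use the Forster-type argument underlying Theorem~\ref{thm:SpecVsSignRank} in a sharper, asymmetric form — first splitting off the rank-one ``all-constant'' direction and then applying the spectral estimate to a non-regular structure assembled from a more carefully chosen family of intervals — and tune the choice of orders so that this refined bound indeed attains order $N^{1/2}$. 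Pinning down that structure, and verifying that the high-probability spectral estimates survive with only the claimed logarithmic loss, is where the real work lies.
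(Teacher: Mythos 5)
Your first paragraph is correct and is essentially the paper's argument for the first assertion: the same count $|R|=1+N+\binom{N}{2}$ and the same case analysis (collinear triples miss the pattern with the middle point excluded, non-collinear triples miss the all-ones pattern).

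The second assertion is where the proposal has a genuine gap, and you have in fact diagnosed it yourself: any matrix assembled from one interval per line is (after regularization) a $\Delta$-regular boolean matrix with $\Delta=\Theta(N^{1/2})$, so Theorem~\ref{thm:SpecVsSignRank} can never certify more than $\Delta/\sigma_2(B)=O(\sqrt{\Delta})=O(N^{1/4})$ -- the paper's own Alon--Boppana-style limitation (Lemma~\ref{lem:min*norm} and the trace computation following it) makes this ceiling unavoidable. The hoped-for ``sharper, asymmetric form of Forster's argument'' is not available: the paper explicitly leaves open whether even the full point--line incidence matrix of a projective plane has sign rank exceeding $\Omega(N^{1/4})$, and no Forster-type argument is known to certify sign rank above $N^{1/2}$ for \emph{any} explicit matrix. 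So the spectral route cannot reach $\Omega(N^{1/2}/\log N)$, and the remaining ``real work'' you defer is not a technical refinement but a change of method. The missing idea is to abandon the explicit certificate and count. For \emph{every} choice of one subset $S_\ell\subseteq\ell$ per line, the resulting $N\times N$ matrix has VC dimension at most $2$, because the point--line incidence matrix contains no $J_{2\times 2}$ and hence neither does any matrix obtained from it by turning ones into zeros. There are $2^{(1+o(1))N^{3/2}}$ such choices, while by Lemma~\ref{l46} (Warren's theorem) the number of $N\times N$ sign matrices of sign rank at most $r$ is only $2^{O(rN\log N)}$; hence some (indeed almost every) choice of subsets yields sign rank $\Omega(N^{1/2}/\log N)$. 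Finally, each chosen subset $S_\ell$ can be made an initial segment of the linear order on $\ell$, so it is an interval, and the full class $R$ of all intervals for these orders contains this $N\times N$ matrix as a submatrix; monotonicity of sign rank under taking submatrices finishes the proof. Note this matches the theorem's purely existential phrasing ``there exists a choice of linear orders'' -- the bound is non-constructive by design.
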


The proof of Theorem~\ref{thm:projint} is given in Section~\ref{sec:PS}.
The proof does not follow directly from Theorem~\ref{t42}
since it is not clear that the classes with VC dimension $2$ and large sign rank which
are guaranteed to exist by Theorem~\ref{t42} can be extended to a maximum class.

\subsection{Computing the sign rank}

Linear Programming (LP) is one of the most famous and useful problems in the class P.
As a decision problem, an LP problem concerns determining the satisfiability of a system
$$\ell_i(x) \geq 0,~i=1,\ldots,m$$
where each $\ell_i$ is an affine function defined over $\mathbb{R}^n$ (say with integer coefficients). 
A natural extension of LP is to consider the case in which each $\ell_i$ is a multivariate polynomial. 
Perhaps not surprisingly, this problem is much harder than LP.
In fact, satisfiability of a system of polynomial inequalities is known to be a complete problem for the class $\exists\mathbb{R}$. The class $\exists\mathbb{R}$ is known to lie between PSPACE and NP (see~\cite{Matousek} and references within).

Consider the problem of deciding whether the sign rank of a given $N\times N$ sign matrix is at most $k$.
A simple reduction shows that to solve this problem it is enough to decide whether
a system of real polynomial inequalities is satisfiable. 
Thus, this problem belongs to the class $\exists\mathbb{R}$.
~\cite{BFJK09}\footnote{Interestingly,
their motivation for considering sign rank
comes from image processing.}, and~\cite{BK13} showed that deciding if the sign rank is at most $3$ is NP-hard, and that deciding if the sign rank is at most $2$ is in P.
Both~\cite{BFJK09}, and~\cite{BK13} established the NP-hardness of deciding whether the sign-rank is at most $3$
by a reduction from the problem of determining stretchacility of pseudo-line arrangements.
This problem concerns whether a given combinatorial description of an arrangement of pseudo-lines
can be realized (``stretched'') by an arrangement of lines.
\cite{Matousek}, based on the works of \cite{Mne89}, \cite{Sho91}, and~\cite{RG95}
showed that determining stretchability of pseudo-line arrangements is in fact $\exists\mathbb{R}$-complete.
Therefore, it follows\footnote{\cite{Matousek} considers a different type of combinatorial description than~\cite{BFJK09,BK13}, and therefore considered a different formulation of the stretchability problem. However, it is possible to transform between these descriptions in polynomial time.} that determining whether the sign-rank is at most $3$ is $\exists\mathbb{R}$-complete.


Another related work of~\cite{LeeS09} concerns 
the problem of computing the approximate rank of a
sign matrix, for which they provide an
approximation algorithm. 
They pose the problem of efficiently approximating the
sign rank as an open problem.

Using an idea similar to the one in the proof of Theorem~\ref{t41}
we derive an approximation algorithm for the sign rank
(see Section~\ref{sec:approxproof}).
\begin{theorem}\label{t43}
There exists a polynomial time algorithm that approximates the 
sign rank 
of a given $N$ by $N$ matrix up to a
multiplicative factor of $c \cdot N / \log(N)$
where $c>0$ is a universal constant.
\end{theorem}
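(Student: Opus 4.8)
The plan is to compute, in polynomial time, a single number $\widehat r$ that is at once an \emph{upper} bound on $\text{sign-rank}(S)$ and, via a matching lower bound argument, no more than an $O(N/\log N)$ factor larger than $\text{sign-rank}(S)$; outputting $\widehat r$ then gives the desired (one-sided) multiplicative approximation. Both bounds are read off the same object --- a spanning tree of the columns with small stabbing number --- which is exactly the structure behind the proof of Theorem~\ref{t41}.

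Concretely, first run the constructive version of Welzl's spanning-tree theorem on the set system whose ground set is the set of columns of $S$ and whose ranges are the rows of $S$. This produces, in polynomial time, a spanning tree $T$ on the columns; let $s$ denote its stabbing number, i.e.\ the maximum over rows $i$ of the number of edges $\{j,j'\}$ of $T$ with $S_{i,j}\ne S_{i,j'}$ (computable in polynomial time by inspection). Following the proof of Theorem~\ref{t41} --- traverse $T$ by depth-first search to linearly order the columns, so that each row (which crosses at most $s$ edges of $T$) changes sign $O(s)$ times along this order, then place the ordered columns on the moment curve in $\R^{O(s)}$ so that each row becomes a halfspace --- one obtains a real matrix $M$ with $\text{sign}(M)=S$ and $\text{rank}(M)\le c_1 s+1$ for an absolute constant $c_1$. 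The algorithm outputs $\widehat r:=\text{rank}(M)$.

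For the analysis, one side is immediate: $\widehat r\ge \text{sign-rank}(S)$ since $M$ is a valid sign representation. For the other side, Welzl's theorem --- the same bound used in the proof of Theorem~\ref{t41} --- guarantees that the tree the algorithm finds satisfies $s\le C\cdot N^{\,1-1/VC(S)}$ for an absolute constant $C$; rearranging, $VC(S)\ge \log N/\log(CN/s)$, and since $VC(S)\le \text{sign-rank}(S)$ (as observed after Proposition~\ref{prop:dualsignrank}),
$$\text{sign-rank}(S)\ \ge\ \frac{\log N}{\log(CN/s)}.$$
Hence
$$\frac{\widehat r}{\text{sign-rank}(S)}\ \le\ \frac{(c_1 s+1)\log(CN/s)}{\log N},$$
and writing $s=xN$ with $x\in(0,1]$ and using the elementary inequality $x\log(1/x)\le 1/e$ one gets $s\log(CN/s)=N\bigl(x\log C+x\log(1/x)\bigr)=O(N)$ and $\log(CN/s)=O(\log N)$, so the right-hand side is $O(N/\log N)$. (The case $s=0$ gives $\widehat r=1=\text{sign-rank}(S)$, and small $N$, or the case of $s$ so small that $\widehat r$ is already within a constant factor of $\text{sign-rank}(S)$, are absorbed into the constant $c$.)

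The main obstacle is the lower-bound step: for the ratio to come out as $O(N/\log N)$ rather than, say, $O(N\log\log N/\log N)$, one needs the bound from the proof of Theorem~\ref{t41} in the sharp form $s=O(N^{1-1/VC(S)})$ with the implied constant \emph{independent of} $VC(S)$ (a $\mathrm{poly}(VC(S))$ dependence is paid for with a $\log\log N$ factor), so one must check that the spanning-tree/moment-curve construction retains an absolute constant even when $VC(S)$ is allowed to grow with $N$ (up to $\log_2 N$); alternatively, one supplements the lower bound with Forster's spectral bound $\text{sign-rank}(S)\ge N/\|S\|$, which is strong precisely in the regime where the VC-based estimate is weak. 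One also has to record that Welzl's reweighting procedure runs in polynomial time. Everything else --- the depth-first reordering, the moment-curve rank estimate, and the convexity inequality --- is routine.
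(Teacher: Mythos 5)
Your proposal is correct and follows essentially the same route as the paper: run Welzl's spanning-tree/reweighting algorithm, output the resulting sign-change-based upper bound on the sign rank, and combine the Welzl guarantee $s\le O(N^{1-1/d})$ with $VC\le\text{sign-rank}$ to get the $O(N/\log N)$ ratio (the paper phrases the final optimization as maximizing $N^{1-1/d}/d$ over $d$, which peaks at $d=\Theta(\log N)$ — the same computation as your $s\log(CN/s)/\log N$ bound). The one issue you flag, whether the constant in Welzl's bound is independent of $VC(S)$, is settled affirmatively by the paper's self-contained proof of its Theorem~\ref{t44} via Haussler's packing bound, which yields an absolute constant.
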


\subsection{Communication complexity}\label{sec:communication}
We briefly explain the notions from communication complexity we use.
For formal definitions, background and more details, 
see the textbook \cite{DBLP:books/daglib/0011756}.

For a function $f$ and a distribution $\mu$
on its inputs, 
define $D_\mu(f)$ as the minimum communication complexity of a 
protocol that 
correctly computes $f$ with error $1/3$ over inputs from $\mu$.
Define
$D^\times(f) = \max \{D_\mu(f) : 
\text{$\mu$ is a product distribution}\}.$
Define the unbounded error communication complexity $U(f)$ of $f$
as the minimum communication complexity of a randomized private-coin\footnote{In the public-coin model, every boolean function has
unbounded communication complexity at most two.} protocol that correctly
computes $f$ with probability strictly larger than $1/2$ on every input.

Two works of~\cite{DBLP:journals/cc/Sherstov10,DBLP:conf/coco/Sherstov07}
showed that there are functions with small 
distributional communication complexity
under product distributions,
and large unbounded error communication complexity.
In \cite{DBLP:journals/cc/Sherstov10} the separation
is as strong as possible but it is not for an explicit function,
and the separation in \cite{DBLP:conf/coco/Sherstov07}
is not as strong but the underlying function is explicit.

The matrix $A$ with $d=2$ and $n\geq 3$ in our example 
from Section~\ref{sec:IntroFinGeo}
corresponds to the 
following communication problem:
Alice gets a point $p \in P$,
Bob gets a line $\ell \in L$,
and they wish to decide whether $p \in \ell$ or not.
Let $f: P \times L\rightarrow\{0,1\}$ be the corresponding
function and let $m = \lceil \log_2(N) \rceil$. 
A trivial protocol would
be that Alice sends Bob using $m$ bits the name of her point, 
Bob checks whether it is incident to the line, and outputs accordingly.

Theorem~\ref{thm:projective} implies the following consequences.
Even if we consider protocols that use randomness and are allowed 
to err with probability less than but arbitrarily close to $\frac{1}{2}$,
then still one cannot do 
considerably better than the above trivial protocol.
However, if the input $(p,\ell)\in P\times L$ is distributed according to 
a product distribution then there exists an $O(1)$ protocol that errs
with probability at most $\frac{1}{3}$.

\begin{corollary}
The unbounded error communication complexity of
$f$ is\footnote{By taking larger values of $d$,
the constant $\frac{1}{4}$ may be increased to $\frac{1}{2}-\frac{1}{2d}$.}
$U(f) \geq \frac{m}{4} - O(1)$.
The distributional communication complexity of $f$ under product distributions
is 
$D^\times(f) \leq O(1)$.
\end{corollary}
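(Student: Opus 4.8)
The plan is to obtain both statements directly from Theorem~\ref{thm:projective} with $d=2$, via two classical translations: the one of Paturi and Simon~\cite{DBLP:journals/jcss/PaturiS86} between unbounded error communication complexity and sign rank, and the one of Kremer, Nisan, and Rovich~\cite{DBLP:conf/stoc/KremerNR95} between one-round distributional complexity under product distributions and VC dimension. The communication matrix of $f\colon P\times L\to\{0,1\}$ is precisely the matrix $A$ of Section~\ref{sec:IntroFinGeo} (with $d=2$, $n\geq 3$) in its $\pm 1$ form, so all relevant parameters of $f$ are parameters of $A$.

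For the lower bound on $U(f)$, recall that by~\cite{DBLP:journals/jcss/PaturiS86} the unbounded error communication complexity of a Boolean function equals the base-$2$ logarithm of the sign rank of its $\pm 1$ communication matrix up to an additive constant, so
\[
U(f)\;\geq\;\log_2\!\left(\text{sign-rank}(A)\right)-O(1).
\]
Theorem~\ref{thm:projective} with $d=2$ gives $\text{sign-rank}(A) > N^{1/2-1/(2d)} = N^{1/4}$, hence $U(f)\geq\tfrac14\log_2 N-O(1)$; since $m=\lceil\log_2 N\rceil\leq\log_2 N+1$, this is $\tfrac{m}{4}-O(1)$. The stronger constant $\tfrac12-\tfrac1{2d}$ mentioned in the footnote follows the same way from the sign rank bound $N^{1/2-1/(2d)}$ of Theorem~\ref{thm:projective} for the incidence matrix of the $d$-dimensional projective space.

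For the upper bound on $D^\times(f)$, observe that by Theorem~\ref{thm:projective} the VC dimension of $A$ is $2$, a constant. The theorem of Kremer, Nisan, and Rovich~\cite{DBLP:conf/stoc/KremerNR95} bounds the one-round distributional communication complexity under product distributions, at any fixed constant error (in particular $1/3$), by a function of the VC dimension alone; since an unrestricted protocol is never weaker than a one-round one, this yields $D^\times(f)=O(1)$. Alternatively one may exhibit a direct one-round protocol: fixing a product distribution $\mu_A\times\mu_B$ and $\epsilon=1/3$, an $\epsilon$-approximation argument applied to one of the two dual incidence set systems --- each of VC dimension $2$ by projective duality --- lets the receiving player, after $O_\epsilon(1)$ bits, output the value of $f$ correctly on all but an $\epsilon$-fraction of the inputs. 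Essentially all the content sits in Theorem~\ref{thm:projective}, and the only point requiring care is that the conventions align: that the $\pm 1$ communication matrix of $f$ is exactly $A$ (so the $O(1)$ from~\cite{DBLP:journals/jcss/PaturiS86} and the ceiling in $m$ are the sole losses), and that the VC-dimension parameter of~\cite{DBLP:conf/stoc/KremerNR95} is the VC dimension of $A$ --- or its dual, which for a projective-plane incidence matrix is again $2$. Neither of these is a genuine obstacle.
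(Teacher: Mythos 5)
Your proposal is correct and follows essentially the same route as the paper: the lower bound on $U(f)$ comes from the sign-rank bound of Theorem~\ref{thm:projective} combined with the Paturi--Simon equivalence, and the upper bound on $D^\times(f)$ comes from $\text{VCdim}(A)=2$ combined with the Kremer--Nisan--Ron protocol under product distributions. The extra detail you supply (the explicit one-round $\epsilon$-approximation protocol and the convention check that the communication matrix of $f$ is $A$) is consistent with, and only elaborates on, the paper's argument.
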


These two seemingly contradicting facts are a corollary
of the high sign rank and the low VC dimension of $A$,
using two known results.
The upper bound on $D^\times(f)$ follows from
the fact that $\text{VCdim}(A)=2$, and
the work of~\cite{DBLP:conf/stoc/KremerNR95}
which used the PAC learning algorithm
to construct an efficient (one round) communication protocol for $f$
under product distributions.
The lower bound on $U(f)$ follows from 
that $\text{sign-rank}(A) \geq \Omega(N^{1/4})$, and
the result of~\cite{DBLP:journals/jcss/PaturiS86}
that showed that unbounded error communication complexity
is equivalent to the logarithm of the sign rank.
See~\cite{DBLP:journals/cc/Sherstov10}
for more details.

\subsection{Counting VC classes}
Let $c(N,d)$ denote the number of classes $C\subseteq\{\pm 1\}^N$
with VC dimension $d$. 
We give the following estimate of $c(N,d)$
for constant $d$ and $N$ large enough.
The proof is given in Section~\ref{sec:counting}.

\begin{theorem}\label{thm:VCcount} \
{For every $d > 0$, there is 
 $N_0=N_0(d)$ such that for all $N>N_0$:} 
$$ N^{(\Omega(N/d))^d}
\leq c(N,d)\leq N^{( O( N))^d}.$$
\end{theorem}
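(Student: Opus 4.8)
The plan is to prove the two bounds by unrelated methods.

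For the lower bound I would exhibit many classes via a function-graph construction. Set $n:=\lfloor N/(d+1)\rfloor$ and split a size-$(d+1)n$ subset of $[N]$ into pairwise disjoint blocks $X_1,\dots,X_d,Z$ of size $n$ (ignored coordinates are set to a constant). For $g\colon X_1\times\dots\times X_d\to Z$ let
$$C_g:=\bigl\{\{x_1,\dots,x_d,g(x_1,\dots,x_d)\}:(x_1,\dots,x_d)\in X_1\times\dots\times X_d\bigr\}\subseteq\{\pm1\}^N,$$
a set of coordinates being identified with its indicator vector. First I would check $VC(C_g)\le d$: every member of $C_g$ has size exactly $d+1$, so on a coordinate set $S$ with $|S|=d+1$ the all-ones pattern occurs only if $S\in C_g$, and if $S=\{x_1,\dots,x_d,g(\vec x)\}\in C_g$ then the pattern that is all-ones on $S$ except at the coordinate $g(\vec x)$ cannot occur, because any member of $C_g$ containing $\{x_1,\dots,x_d\}$ must equal $\{x_1,\dots,x_d,g(\vec x)\}$ (each $x_i$ determines the $i$-th coordinate of that member, since the blocks are disjoint). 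Thus no $(d+1)$-set is shattered. Next I would check $g\mapsto C_g$ is injective: if $g(\vec x)=z\ne z'=g'(\vec x)$ then $\{x_1,\dots,x_d,z\}\in C_g$, but the only member of $C_{g'}$ with $X_i$-coordinate $x_i$ for every $i$ is $\{x_1,\dots,x_d,z'\}$. There are $n^{n^d}$ functions $g$, so $c(N,d)\ge n^{n^d}=N^{(\Omega(N/d))^d}$.

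For the upper bound, Sauer--Shelah gives $|C|\le\sum_{i\le d}\binom Ni\le(O(N))^d=:M$. Since the naive estimate (at most $\binom{2^N}{\le M}$ subsets) only yields $2^{O(N^{d+1})}$, the goal is to encode each class of VC dimension $d$ by $O(N^d\log N)$ bits. I would use a multiscale net encoding, in the spirit of the Welzl spanning-tree ingredient behind Theorem~\ref{t41}: greedily build a nested chain $C_1\subseteq C_2\subseteq\dots\subseteq C_L=C$ with $L=\lceil\log_2 N\rceil$, where $C_j$ is a maximal subset of $C$ any two of whose vectors are at Hamming distance more than $\lceil N/2^j\rceil$. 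By Haussler's packing bound $|C_j|\le O_d(2^{jd})$, and by maximality each vector of $C_j$ lies within distance $\lceil N/2^{j-1}\rceil$ of some vector of $C_{j-1}$. Encode $C$ by recording $C_1$ explicitly ($O_d(1)$ vectors) and, for each $j\ge2$ and each $c\in C_j\setminus C_{j-1}$, a pointer into $C_{j-1}$ together with the at most $\lceil N/2^{j-1}\rceil$ coordinates in which $c$ differs from the pointed vector; this rebuilds $C$ from the coarsest scale downward. The total length is
$$\sum_{j=1}^{L}O_d\!\bigl(2^{jd}\bigr)\cdot O\!\Bigl(\frac{Nj}{2^{j}}\Bigr)\;=\;O(N)\sum_{j=1}^{L}j\,2^{j(d-1)}\;=\;O\bigl(N^d\log N\bigr)$$
for $d\ge2$ (the summand being increasing in $j$), so $c(N,d)\le 2^{O(N^d\log N)}=N^{(O(N))^d}$.

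The crux is the upper bound, and within it the per-scale estimate $|C_j|\le O_d(2^{jd})$: this is where the VC-dimension hypothesis is used (via Haussler's packing bound) and what drives the exponent down from $N^{d+1}$ to $N^d\log N$; the rest is bookkeeping. I anticipate two caveats. First, $d=1$: the chaining above loses a logarithmic factor, and is better replaced by Theorem~\ref{thm:VC1} (sign rank at most $3$) together with Warren's theorem, which for $|C|=O(N)$ gives $2^{O(N\log N)}$ at once. Second, the companion bound for \emph{maximum} classes advertised in the abstract would be obtained not from the above but from the rigid cubical / corner-peeling structure of maximum classes; that is the part answering Frankl's question.
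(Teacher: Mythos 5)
Your proposal is correct in substance, but it reaches the two bounds by partly different routes than the paper. For the lower bound the paper does not use a function-graph construction: it takes a Steiner-type design, i.e.\ a family of $\binom{N}{d}/(d+1)$ sets of size $d+1$ covering every $d$-set exactly once (existence and a count of $N^{(1+o(1))\binom{N}{d}/(d+1)}$ such families via Keevash), and builds a class from all sets of size at most $d-1$ plus, for each design block, the block and all but one of its $d$-subsets. That construction buys two things yours does not: a larger count (exponent roughly $N^d/(d!(d+1))$ versus your $ (N/(d+1))^d$, though both are $(\Omega(N/d))^d$, so the stated theorem follows either way), and the fact that the constructed classes are maximum, which is exactly what the paper reuses for the lower bound in Theorem~\ref{thm:MAXcount} (not corner-peeling, as you speculate). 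Your $C_g$ construction, in exchange, is completely elementary and avoids Keevash. For the upper bound you and the paper use the same engine, Haussler's packing theorem (Theorem~\ref{t200}), inside a compression argument; the paper organizes it as a greedy enumeration $c_1,c_2,\dots$ of $C$ by minimum Hamming distance to the already-chosen set, encoding each new vector by a pointer plus its symmetric difference and using packing to bound the number of steps whose jump is at least $2^\ell$ by $O_d((N/2^\ell)^d)$, whereas you organize it as dyadic chaining through nested nets; the bookkeeping is essentially identical and both give $2^{O(N^d\log N)}$.

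Two small repairs are needed in your write-up. In the chaining, requiring pairwise distance \emph{more than} $\lceil N/2^j\rceil$ at the finest scale $j=L$ forces distance at least $2$, so $C_L$ may omit members of $C$ at Hamming distance $1$ from a net point; either demand distance at least $\lceil N/2^j\rceil$ (so $C_L=C$) or append one final level encoding each remaining vector by a pointer into $C_L$ plus one coordinate — the cost is unchanged. Second, $c(N,d)$ counts classes of VC dimension exactly $d$, and you only verify $VC(C_g)\le d$; you should add the (immediate) observation that any transversal $\{x_1,\dots,x_d\}$ with $x_i\in X_i$ is shattered by $C_g$ once $n\ge 2$, so the dimension is exactly $d$. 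Your $d=1$ caveat is fair: the chaining sum indeed picks up an extra logarithm there, and your proposed substitute (Theorem~\ref{thm:VC1} plus the Warren-based count of low sign-rank matrices, Lemma~\ref{l46}) is a legitimate way to get $N^{O(N)}$ for that case.
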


Let $m(N,d)$ denote the number of maximum classes $C\subseteq\{\pm 1\}^N$ of VC dimension $d$. The problem of estimating $m(N,d)$ was proposed by~\cite{FranklOpen}.
We provide the following estimate
(see Section~\ref{sec:counting}).

\begin{theorem}\label{thm:MAXcount} \
{For every $d > 1$, there is $N_0=N_0(d)$ such that for all $N>N_0$:}
$$ N^{(1+o(1))\frac{1}{d+1}{N \choose d}}
\leq m(N,d)\leq N^{(1+o(1))\sum_{i=1}^d{N \choose i}}.$$
\end{theorem}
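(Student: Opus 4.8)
The plan is to prove the two bounds by quite different means: the upper bound by an efficient encoding of maximum classes, and the lower bound by an explicit construction.

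For the upper bound I would encode a maximum class $C$ of VC dimension $d$ through a spanning tree of its one-inclusion graph. Recall that $|C|=s:=\sum_{i=0}^{d}\binom{N}{i}$, and that the one-inclusion graph $G(C)$ --- whose vertices are the elements of $C$, with two elements adjacent when they differ in a single coordinate --- is connected, a standard property of maximum classes. Fix any spanning tree $T$ of $G(C)$, any root $c_0\in C$, and for each of the $s-1$ edges of $T$ record the single coordinate in $[N]$ that flips along it. Then $C$ is recovered from the following data: the isomorphism type of $T$ as an \emph{unlabelled} tree on $s$ vertices; the choice of root; the vector $c_0\in\{\pm1\}^N$; and the labelling of the $s-1$ edges of $T$ by coordinates --- one simply propagates $c_0$ outward along $T$, flipping the recorded coordinate across each edge (the unlabelled tree suffices because the reconstructed vectors play the role of vertex labels). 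Hence $m(N,d)$ is at most the product of the number of unlabelled trees on $s$ vertices (which is $2^{O(s)}$), the number $s$ of root choices, the number $2^N$ of root vectors, and the number $N^{s-1}$ of edge-labellings. Since $s=\Theta(N^d)$, every factor other than $N^{s-1}$ is $N^{o(s)}$, so $m(N,d)\le N^{(1+o(1))(s-1)}=N^{(1+o(1))\sum_{i=1}^{d}\binom{N}{i}}$, as required.

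For the lower bound I would build a large family of maximum classes. The base case $d=1$ is easy: the $N!$ linear orders on $N$ points give $N!$ distinct maximum classes of VC dimension $1$ (their one-inclusion graphs are paths), and $N!\ge N^{(1+o(1))N/2}$. For larger $d$ the natural route is the recursive description of maximum classes: a maximum class $C$ on $[N]$ is equivalent to a triple $\bigl(C|_{[N-1]},\,C^{N},\,\epsilon\bigr)$, where $C|_{[N-1]}$ is a maximum class of VC dimension $d$ on $[N-1]$, $C^{N}\subseteq C|_{[N-1]}$ is a maximum class of VC dimension $d-1$, and $\epsilon$ records, for each $v\in C|_{[N-1]}\setminus C^{N}$, the last coordinate of its unique extension inside $C$. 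Starting from the family of maximum VC-dimension-$d$ classes on $[N-1]$ obtained inductively, I would, for each such base class $D$, fix a suitable maximum $(d-1)$-subclass $E\subseteq D$ and count the assignments $\epsilon\colon D\setminus E\to\{\pm1\}$ that do not create a shattered $(d+1)$-set; each valid $\epsilon$ yields a distinct maximum class on $[N]$, and different base classes yield different classes as well, so $m(N,d)\ge m(N-1,d)\cdot\min_D(\#\text{valid }\epsilon)$. Multiplying these contributions over the $N$ levels of the recursion should give the stated bound (an alternative, more geometric route is to construct many piecewise-linear pseudohyperplane arrangements in $\mathbb{R}^d$, each of which realises a maximum class, exploiting the extra flexibility of the piecewise-linear setting).

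The hard part will be the lower bound --- concretely, showing that at a typical level one can keep $N^{\Omega(N^{d-1})}$ valid extensions $\epsilon$ (equivalently, producing that many piecewise-linear arrangements), so that the product over all levels reaches $N^{(1+o(1))\frac{1}{d+1}\binom{N}{d}}$. Note that this target already exceeds the number $2^{O(\mathrm{poly}(d)\cdot N\log N)}$ of realisable linear arrangements of $N$ hyperplanes in $\mathbb{R}^d$ (a bound one obtains from Warren's theorem), so the construction is forced to use maximum classes that are not realisable; simultaneously controlling validity (no shattered $(d+1)$-set) and distinctness in such a flexible family is the delicate point. The upper bound argument above, by contrast, is essentially a clean encoding and I expect it to go through without difficulty.
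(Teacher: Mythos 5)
Your upper bound is correct and is in substance the paper's argument: the paper also starts from the fact that a maximum class is a connected subgraph of the $N$-cube and then invokes a black-box lemma stating that a graph on $m$ vertices with maximum degree $D$ has at most $m(eD)^k$ connected $k$-vertex subgraphs; with $k=f=\sum_{i\le d}\binom{N}{i}$, $m=2^N$, $D=N$ this gives $N^{(1+o(1))f}$. Your spanning-tree encoding is just a self-contained proof of that lemma. One small point to nail down: an ``edge-labelling of an unlabelled tree'' is only well-defined once you fix a canonical representation of the rooted tree (say the DFS/parenthesis encoding, which also gives the $2^{O(s)}$ count and a canonical order on the edges); with that said, the reconstruction by propagating $c_0$ along $T$ is fine.

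The lower bound, however, has a genuine gap, and you have correctly located it yourself: everything hinges on showing that at a typical level of the recursion one can certify $N^{\Omega(N^{d-1})}$ valid extensions $\epsilon$ (i.e., ones creating no shattered $(d+1)$-set), and no mechanism for doing so is given. Your observation that the target count forces the use of non-realizable maximum classes is correct but only sharpens the difficulty. The paper's route is entirely different and the missing ingredient is concrete: Keevash's theorem on the existence \emph{and number} of Steiner systems $S(d,d+1,N)$. There are $N^{(1+o(1))\frac{1}{d+1}\binom{N}{d}}$ families $F$ of $(d+1)$-subsets of $[N]$ such that every $d$-subset lies in exactly one member of $F$ (for $N$ satisfying the divisibility conditions and $N>N_0(d)$), and each such $F$ yields a class $C_F$ consisting of all sets of size at most $d-1$ together with, for each block $B\in F$, the block $B$ and all but one of its $d$-subsets. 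One checks $|C_F|=\sum_{i=0}^{d}\binom{N}{i}$ and that no $(d+1)$-set is shattered (the only candidate is a block, whose excluded $d$-subset cannot appear as a trace), so each $C_F$ is maximum of VC dimension $d$, and distinct designs give distinct classes. Without an input of this strength --- a counted, explicitly non-realizable combinatorial structure --- your recursion does not reach $N^{(1+o(1))\frac{1}{d+1}\binom{N}{d}}$, so as written the lower half of the theorem is unproved.
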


The gap between our upper and lower bound is roughly
a multiplicative factor of $d+1$ in the exponent.
In the previous bounds given by~\cite{FranklOpen} the gap was a multiplicative factor of $N$
in the exponent. 

\subsection{Counting graphs}

Here we describe an application of 
our method for proving Theorem~\ref{t41}
to counting graphs with a given forbidden substructure.

Let $G = (V,E)$ be a graph (not necessarily bipartite).
The universal graph $U(d)$ is defined as the bipartite graph with
two color classes $A$ and $B=2^A$ where
$|A| = d$, and the edges are defined as $\{a,b\}$ iff $a \in b$.
The graph $G$ is called $U(d)$-free if for all two disjoint sets of vertices
$A,B \subset V$ so that $|A|=d$ and $|B|=2^{d}$, the bipartite
graph consisting of all edges of $G$ between $A$ and $B$ 
is not isomorphic to $U(d)$.
In Theorem 24 of \cite{ABBM}, which improves Theorem 2 there,
it is proved that for $d \geq 2$,
the number of $U(d+1)$-free graphs on $N$ vertices is at most 
$$
2^{O(N^{2-1/d} (\log N)^{d+2})}.
$$
The proof in \cite{ABBM} 
is quite involved, consisting of several technical 
and complicated
steps. Our methods give a different, 
quick proof of an improved estimate,
replacing the $(\log N)^{d+2}$ term by a single $\log N$ term.
\begin{theorem}
\label{thm:Udfree}
For every fixed $d \geq 1$,
the number of $U(d+1)$-free graphs on $N$ vertices is at most
$2^{O(N^{2-1/d} \log N)}$.
\end{theorem}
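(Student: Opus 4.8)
The plan is to associate to each $U(d+1)$-free graph $G$ on vertex set $V$ (with $|V|=N$) the sign matrix $M_G\in\{\pm1\}^{V\times V}$ given by $(M_G)_{u,v}=1$ iff $uv\in E(G)$. Since $G\mapsto M_G$ is injective, it suffices to bound the number of matrices that arise this way. I would do this by showing that $U(d+1)$-freeness forces the range space of neighbourhoods of $G$ to have primal shatter function $O_d(m^d)$, then feeding this into the embedding argument behind Theorem~\ref{t41} to get a linear order in which each row is a union of $O_d(N^{1-1/d})$ intervals, and finally counting, exactly as in the proofs of Theorem~\ref{t41} and Theorem~\ref{thm:VCcount}.

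\textbf{Step 1: a primal shatter bound.} Write $\Gamma(v)$ for the neighbourhood of $v$ in $G$. I would first prove that if $G$ is $U(d+1)$-free then, for every $S\subseteq V$, the number of distinct traces $|\{\Gamma(v)\cap S:v\in V\}|$ is $O_d(|S|^d)$. The idea is to consider first only the traces of \emph{external} vertices, i.e.\ the range space $(S,\{\Gamma(v)\cap S:v\in V\setminus S\})$, and show it has VC dimension at most $d$: if it shattered some $A\subseteq S$ with $|A|=d+1$, then for every $T\subseteq A$ one could pick a vertex $v_T\in V\setminus S\subseteq V\setminus A$ with $\Gamma(v_T)\cap A=T$, and the $2^{d+1}$ distinct vertices $B=\{v_T:T\subseteq A\}$ would be disjoint from $A$ with the bipartite graph of $G$-edges between $A$ and $B$ isomorphic to $U(d+1)$, contradicting $U(d+1)$-freeness. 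Sauer--Shelah then bounds the external trace system by $\sum_{i=0}^{d}\binom{|S|}{i}$, and adding the at most $|S|$ traces of vertices inside $S$ gives $O_d(|S|^d)$. It is precisely here that the disjointness requirement in the definition of $U(d+1)$-freeness must be used, in order to land at VC dimension $d$ and hence at exponent exactly $d$; note that $M_G$ itself need not have VC dimension $\le d$ (only $\le d+1$), so one cannot simply invoke Theorem~\ref{t41} as a black box.

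\textbf{Steps 2--3: embed and count.} The proof of Theorem~\ref{t41} uses the hypothesis ``VC dimension $d$'' only through the resulting primal shatter function bound $O(m^d)$ (Sauer--Shelah, then Welzl's result on spanning trees with low stabbing number, then the moment curve); since the range space $(V,\{\Gamma(v):v\in V\})$ has this same bound by Step~1, that argument applies verbatim and produces a linear order $\prec_G$ of $V$ in which every $\Gamma(v)$ is a union of at most $k=O_d(N^{1-1/d})$ intervals (equivalently, $\text{sign-rank}(M_G)=O_d(N^{1-1/d})$, but it is the interval structure we count with). Now a $U(d+1)$-free $G$ is determined by $\prec_G$, of which there are at most $N!=2^{O(N\log N)}$, together with, for each of the $N$ vertices $v$, the at most $2k$ interval endpoints describing $\Gamma(v)$, giving at most $N^{2k}$ choices per vertex and $N^{2kN}=2^{O(N^{2-1/d}\log N)}$ in total. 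Since $2-1/d\ge1$, this gives $2^{O(N\log N)}\cdot2^{O(N^{2-1/d}\log N)}=2^{O(N^{2-1/d}\log N)}$, as claimed. (Alternatively, one may skip the explicit encoding and instead count the $N\times N$ sign matrices of sign rank at most $r$ via Warren's theorem --- there are $2^{O(rN\log N)}$ of them --- and substitute $r=O(N^{1-1/d})$.)

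\textbf{Main obstacle.} The only step requiring genuine care is Step~1: extracting from $U(d+1)$-freeness a primal shatter function with the \emph{exact} exponent $d$ (bounding only a weaker VC dimension of $M_G$ would degrade the exponent). Everything afterward is a direct reuse of the machinery behind Theorems~\ref{t41} and~\ref{thm:VCcount}, and the improvement over \cite{ABBM} --- a single $\log N$ factor in place of $(\log N)^{d+2}$ --- is automatic, since the spanning-path version of Welzl's bound used in Theorem~\ref{t41} carries no extraneous logarithmic factors; one should only check that the spanning structure can be taken to be a path, which is standard (e.g.\ via an Euler tour of a bounded-degree spanning tree of low crossing number, at the cost of a constant factor in $k$).
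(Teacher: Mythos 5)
Your proposal is correct, but it follows a genuinely different route from the paper's. The paper never analyzes the full adjacency matrix: it splits the vertex set into two equal halves, observes that the bipartite graph between the halves---where the disjointness required by $U(d+1)$-freeness is automatic---has VC dimension at most $d$, bounds the number of such bipartite graphs by $T(N,d)=2^{O(N^{2-1/d}\log N)}$ (via the sign-rank bound of Theorem~\ref{t41} together with the Warren-type count of Lemma~\ref{l46}), and then glues the bipartitions together with Shearer's lemma (Lemma~\ref{l47}), losing only a square. You instead work with the whole adjacency matrix; since its VC dimension need not be at most $d$, you replace the VC-dimension hypothesis by a primal shatter function bound $O_d(t^d)$, obtained by separating external traces (where $U(d+1)$-freeness gives VC dimension at most $d$---this shattering-to-$U(d+1)$ step is exactly the paper's key observation, transplanted from the bipartition to an arbitrary column set) from the at most $|S|$ internal traces. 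That is precisely the hypothesis Lemma~\ref{l44} needs, so the rest goes through with no bipartition and no entropy lemma; either of your two counting finishes (order plus interval endpoints, or sign rank via Lemma~\ref{l43} fed into Lemma~\ref{l46}) is valid. What each approach buys: the paper's decomposition keeps the clean statement ``VC dimension at most $d$'' at the price of invoking Shearer, while yours needs the (easy) internal-trace bookkeeping but gives the bound in one shot and makes explicit that only the shatter function, not the VC dimension, is what matters---a point the paper itself emphasizes in its tightness discussion of Theorem~\ref{t41}. One small caveat: Lemma~\ref{l44} is stated for $d>1$, so for $d=1$ you should fall back on the VC-dimension-one machinery (Claim~\ref{clm:ID}, Theorem~\ref{thm:VC1}), which gives $O(1)$ intervals per row and the claimed $2^{O(N\log N)}$; the paper's proof relies on Theorem~\ref{t41} and has the same edge case.
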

The proof of the theorem is given in Section~\ref{sec:countingGraphs}.

\subsection{Geometry}
Differences and similarities 
between finite geometries and real geometry are well known.
An example of a related problem is 
finding the minimum dimension of Euclidean space in which we can embed
a given finite plane (i.e.\ a collection of points and lines
satisfying certain axioms).
By embed we mean that there are two one-to-one maps
$e_P,e_L$ so that $e_P(p) \in e_L(\ell)$ iff $p \in \ell$
for all $p \in P, \ell \in L$.
The Sylvester-Gallai theorem shows, for example,
that Fano's plane
cannot be embedded in any finite dimensional real space 
if points are mapped to points and lines to lines.

How about a less restrictive meaning of embedding?
One option is to allow embedding using half spaces,
that is, an embedding in which points are mapped to points
but lines are mapped to half spaces.
Such embedding is always possible if the dimension is high enough:
Every plane with point set $P$ and line set $L$ can be embedded in $\R^P$
by choosing $e_P(p)$ as the $p$'th unit vector,
and $e_L(\ell)$ as the half space with positive projection on the vector
with $1$ on points in $\ell$ and $-1$ on points outside $\ell$.
The minimum dimension for which such an embedding exists is captured by
the sign rank of the underlying incidence matrix
(up to a $\pm 1$).

\begin{corollary}
A finite projective plane of order $n \geq 3$  
cannot be embedded in $\R^k$ using half spaces,
unless $k > N^{1/4}-1$ with $N = n^2 + n+1$.
\end{corollary}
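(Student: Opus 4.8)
The plan is to derive the corollary directly from Theorem~\ref{thm:projective} with $d=2$: any embedding of the plane into $\R^k$ with points mapped to points and lines mapped to half spaces produces a real $N\times N$ matrix of rank at most $k+1$ whose sign pattern equals that of the signed incidence matrix $A$, and the lower bound on $\text{sign-rank}(A)$ then forces $k$ to be large.

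First I would fix a projective plane of order $n\ge 3$ with point set $P$, line set $L$, and $|P|=|L|=N=N_{n,2}=n^2+n+1$, and assume towards a contradiction that there are one-to-one maps $e_P\colon P\to\R^k$ and $\ell\mapsto H_\ell$, with $H_\ell$ a half space of $\R^k$, such that $e_P(p)\in H_\ell$ if and only if $p\in\ell$. Writing $H_\ell=\{x\in\R^k:\langle x,v_\ell\rangle\ge c_\ell\}$, the homogenization $\widehat{e}_P(p)=(e_P(p),1)\in\R^{k+1}$, $\widehat{v}_\ell=(v_\ell,-c_\ell)\in\R^{k+1}$ satisfies $\langle\widehat{e}_P(p),\widehat{v}_\ell\rangle\ge 0$ iff $p\in\ell$, so the matrix $M$ with $M_{p,\ell}=\langle\widehat{e}_P(p),\widehat{v}_\ell\rangle$ has rank at most $k+1$ and is nonnegative exactly on incident pairs. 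To turn this into an exact sign representation I would replace each offset $c_\ell$ by $c_\ell-\varepsilon$ for one sufficiently small $\varepsilon>0$ (the point--line configuration is finite): this strictly pushes incident pairs positive and keeps non-incident pairs negative, producing $M'$ with $\text{sign}(M')=A$ and $\text{rank}(M')\le k+1$. Hence $\text{sign-rank}(A)\le k+1$. (If one prefers open half spaces the argument is identical, perturbing the offsets in the opposite direction.)

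Next I would apply Theorem~\ref{thm:projective} with $d=2$, which gives
$$\text{sign-rank}(A)>\frac{n^2-1}{n^{1/2}(n-1)}=\frac{n+1}{\sqrt{n}}\ge N^{1/2-1/4}=N^{1/4},$$
the last inequality being the $d=2$ instance of the bound $N^{\frac12-\frac{1}{2d}}$ in Theorem~\ref{thm:projective} (equivalently $(n+1)^4\ge n^2(n^2+n+1)$). Combining with the previous paragraph, $k+1\ge\text{sign-rank}(A)>N^{1/4}$, that is $k>N^{1/4}-1$, as claimed.

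The only point requiring care — rather than a genuine obstacle — is the exact correspondence between sign rank and half-space embedding dimension: the ``$+1$'' comes precisely from the homogenization step, and the boundary entries of $M$ (points lying on the bounding hyperplane of a line's half space) must be removed by perturbing the offsets in the direction that preserves all incidences, uniformly over the finite configuration. Everything else is an immediate consequence of Theorem~\ref{thm:projective}.
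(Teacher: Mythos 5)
Your proposal is correct and follows exactly the route the paper intends: a half-space embedding in $\R^k$ yields, after homogenization and a small perturbation of the offsets, a rank-$(k+1)$ matrix with sign pattern $A$, and Theorem~\ref{thm:projective} with $d=2$ gives $\text{sign-rank}(A) > N^{1/4}$, hence $k > N^{1/4}-1$. The paper leaves these details implicit (noting only that the embedding dimension is captured by the sign rank ``up to a $\pm 1$''), and your careful treatment of the homogenization and of boundary incidences is a faithful filling-in of that argument.
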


Roughly speaking, the corollary says that
there are no efficient ways to embed finite planes
in real space using half spaces.

\section{Proofs}
\subsection{Duality}
\label{sec:duality}
Here we discuss the connection between VC dimension and dual sign rank.

We start with an equivalent definition of dual sign rank,
that is based on the following notion.
We say that a set of columns $C$ is {\em antipodally shattered} in
a sign matrix $S$ if for each $v\in\{\pm 1\}^C$, either
$v$ or $-v$ appear as a row in the restriction of $S$ to the columns in $C$.
\begin{claim}
The set of columns $C$ is antipodally shattered in $S$ if and only if
in every matrix $M$ with $\text{sign}(M)=S$ the columns in $C$ are 
linearly independent.
\end{claim}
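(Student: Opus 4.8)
The plan is to work with the reformulation that is the logical complement of the stated equivalence: $C$ fails to be antipodally shattered if and only if there is some real matrix $M$ with $\text{sign}(M)=S$ in which the columns of $C$ are linearly dependent. A linear dependence among the columns of $C$ is exactly a nonzero vector $\alpha\in\R^C$ with $\sum_{j\in C}\alpha_j M_{i,j}=0$ for every row $i$, so the whole statement is about the interplay between the fixed sign pattern of each row and the freedom to choose the magnitudes of the entries.

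For the forward direction, suppose $C$ is not antipodally shattered and pick $v\in\{\pm1\}^C$ so that neither $v$ nor $-v$ occurs as a row of $S|_C$. I would take $\alpha=v$ as the intended dependence. For a row $i$ with sign pattern $u=S_{i,C}$, the hypothesis $u\neq v$ produces a coordinate where $v_ju_j=-1$ and the hypothesis $u\neq -v$ produces a coordinate where $v_ju_j=+1$; hence the vector with coordinates $v_ju_j$ has entries of both signs, and therefore one can choose positive reals $t^{(i)}_j$ with $\sum_{j\in C}v_ju_jt^{(i)}_j=0$, since a sum $\sum_j\beta_jt_j$ over $t_j>0$ attains every real value once $\beta$ has coordinates of both signs. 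Setting $M_{i,j}=u_jt^{(i)}_j$ for $j\in C$ (and $M_{i,j}$ any real of sign $S_{i,j}$ for $j\notin C$) yields a matrix with $\text{sign}(M)=S$ satisfying $\sum_{j\in C}v_jM_{i,j}=0$ for all $i$, so the columns of $C$ are dependent.

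For the reverse direction, suppose $C$ is antipodally shattered and that some $M$ with $\text{sign}(M)=S$ satisfies $\sum_{j\in C}\alpha_jM_{i,j}=0$ for all $i$ with $\alpha\neq0$. Let $C'\subseteq C$ be the support of $\alpha$ and define $v\in\{\pm1\}^C$ by $v_j=\text{sign}(\alpha_j)$ on $C'$ and $v_j=1$ elsewhere. By antipodal shattering, some row $i_0$ has $S_{i_0,C}=v$ or $S_{i_0,C}=-v$. In the first case every term $\alpha_jM_{i_0,j}$ with $j\in C'$ is strictly positive and the remaining terms vanish, so the sum is strictly positive; in the second case it is strictly negative. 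Either way it is nonzero, contradicting the assumed dependence.

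The argument is short, and the only points that need care are the bookkeeping around zero coordinates of $\alpha$ (handled cleanly by passing to its support $C'$) and the elementary fact about which values $\sum_j\beta_jt_j$ takes over positive $t_j$; neither is a genuine obstacle, so I do not anticipate any serious difficulty.
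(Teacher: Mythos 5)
Your proof is correct and follows essentially the same route as the paper's: the antipodally-shattered direction uses the sign vector $v_j=\mathrm{sign}(\alpha_j)$ (with the zero coordinates handled the same way) to contradict a dependence, and the other direction builds a matrix whose rows restricted to $C$ all lie in the hyperplane $\sum_j v_jx_j=0$, which is exactly the paper's subspace-$U$ construction written out entrywise.
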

\begin{proof}
First, assume $C$ is such that there exists some
$M$ with $\text{sign}(M)=S$ in which the columns in $C$ are linearly dependent.
For a column $j \in C$, denote by $M(j)$ the $j$'th column in $M$.
Let $\{\alpha_j : j \in C\}$ be a set of real numbers so that
$\sum_{j \in C}\alpha_j M(j) =0$ and not all $\alpha_j$'s are zero. 
Consider the vector $v\in\{\pm 1\}^C$
such that $v_j = 1$ if $\alpha_j\geq 0$ and $v_j= -1$ if $\alpha_j < 0$.
The restriction of $S$ to $C$ does not contain $v$ nor $-v$
as a row, which certifies that $C$ is not antipodally shattered by $S$.

Second, let $C$ be a set of columns which is not antipodally shattered in $S$. 
Let $v\in\{\pm 1\}^C$ be such that both $v,-v$ do not appear as a row in the restriction
of $S$ to $C$. Consider the subspace $U =\{u \in \R^C : \sum_{j \in C} u_j v_j = 0 \}$.
For each sign vector $s \in\{\pm 1\}^C$ so that $s \neq \pm v$, 
the space $U$ contains some vector $u_s$ such that $\text{sign}(u_s)=s$. 
Let $M$ be so that $\text{sign}(M)=S$ 
and in addition for each row in $S$ that has pattern $s \in \{\pm\}^C$
in $S$ restricted to $C$, the corresponding row in $M$ restricted to $C$ is $u_s \in U$.
All rows in $M$ restricted to $C$ are in $U$, and therefore  
the set $\{M(j) : j \in C\}$ is linearly dependent.
\end{proof}

\begin{corollary}
The dual sign rank of $S$
is the maximum size of a set of columns that are antipodally shattered in $S$. 
\end{corollary}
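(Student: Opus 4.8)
The plan is to observe that this Corollary is just a restatement of the preceding Claim in the language of the definition of dual sign rank, so essentially no new work is required. First I would unwind the definition: by construction, $\text{dual-sign-rank}(S)$ is the largest $k$ for which there exist $k$ (distinct) columns $j_1,\ldots,j_k$ such that in \emph{every} real matrix $M$ with $\text{sign}(M)=S$ the columns $j_1,\ldots,j_k$ are linearly independent. Since specifying ``$k$ distinct columns'' is the same as specifying a column subset of size $k$, this is exactly
$$\text{dual-sign-rank}(S) = \max\bigl\{\, |C| \ : \ C \text{ is a set of columns satisfying } (\star) \,\bigr\},$$
where $(\star)$ is the property ``for every $M$ with $\text{sign}(M)=S$, the columns in $C$ are linearly independent in $M$.''

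Then I would simply invoke the Claim proved immediately above, which asserts that $(\star)$ holds for a column set $C$ if and only if $C$ is antipodally shattered in $S$. Substituting this equivalence into the displayed formula gives $\text{dual-sign-rank}(S) = \max\{\,|C| : C \text{ is antipodally shattered in } S\,\}$, which is precisely the statement of the Corollary. There is no genuine obstacle here — all the content lives in the Claim — and the only point requiring any care is the cosmetic mismatch between the quantifier phrasing in the definition (``the maximum $k$ such that there exist $k$ columns $j_1,\ldots,j_k$ \ldots'') and the set-theoretic phrasing in the Corollary (``the maximum size of a set of columns \ldots''); these coincide because the columns $j_1,\ldots,j_k$ are taken distinct, so they form a set of size exactly $k$, and conversely any antipodally shattered set of size $k$ supplies such a tuple.
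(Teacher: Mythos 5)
Your proposal is correct and matches the paper exactly: the corollary is an immediate consequence of the preceding Claim combined with the definition of dual sign rank, and the paper treats it as such without further argument. The one point you flag (tuples of distinct columns versus sets) is handled correctly.
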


Now, we prove Proposition~\ref{prop:dualsignrank}:
$$ VC(S)\leq \text{dual-sign-rank}(S) \leq 2VC(S)+1.$$

The left inequality:
The VC dimension of $S$
is at most the maximum size of a set of columns
that is antipodally shattered in $S$,
which by the above claim equals the dual sign rank of $S$.
%

The right inequality:
Let $C$ be a largest set of columns that is antipodally shattered in $S$.
By the claim above, the dual sign rank of $S$ is $|C|$.
Let $A\subseteq C$ such that $|A| = \lfloor |C|/2 \rfloor$. If $A$ is
shattered in $S$ then we are done. Otherwise, there exists some $v\in\{\pm 1\}^A$
that does not appear in $S$ restricted to $A$. Since $C$ is antipodally
shattered by $S$, this implies that $S$ contains all patterns in $\{\pm 1\}^C$
whose restriction to $A$ is $-v$. In particular, $S$ shatters
$C\setminus A$ which is of size at least $\lfloor |C|/2\rfloor$.

\subsection{Sign rank versus VC dimension}
\label{sec:VCdim}
In this section we study the maximum possible sign rank of
$N \times N$ matrices with VC dimension $d$, presenting the proofs
of Proposition~\ref{prop:dualsignrank} and Theorems~\ref{t41} and~\ref{t42}. We also show that the arguments supply a new, short
proof and an improved estimate for a problem in
asymptotic enumeration of graphs studied by \cite{ABBM}. 

\subsubsection{VC dimension one}
\label{sec:VCone}
Our goal in this section is to show that sign matrices with VC dimension
one have sign rank at most $3$, and that $3$ is tight.
Before reading this section,
it may be a nice exercise to 
prove that the sign rank of the $N \times N$
signed identity matrix is exactly three (for $N \geq 4$).

Let us start by recalling a geometric interpretation of sign rank.
Let $M$ by an $R \times C$ sign matrix.
A $d$-dimensional embedding of $M$ using half spaces
consists of two maps $e_R,e_C$ so that
for every row $r \in [R]$ and column $c \in [C]$,
we have that $e_R(r) \in \R^d$,
$e_C(c)$ is a half space in $\R^d$,
and $M_{r,c} = 1$ iff $e_R(r) \in e_C(c)$.
The important property for us is that if $M$ has a $d$-dimensional
embedding using half spaces then its sign rank is at most $d+1$.
The $+1$ comes from the fact that the hyperplanes defining the half spaces
do not necessarily pass through the origin.

Our goal in this section is to embed $M$ with VC dimension
one in the plane using half spaces.
The embedding is constructive and uses the following 
known claim
(see, e.g., Theorem 11 in~\cite{DBLP:conf/alt/DoliwaFSZ15}).

\begin{claim}[\cite{DBLP:conf/alt/DoliwaFSZ15}]
\label{clm:ID}
Let $M$ be an $R \times C$ sign matrix with VC dimension one
so that no row appears twice in it, and every column $c$ is shattered
(i.e.\ the two values $\pm 1$ appear in it).
Then, there is a column $c_0 \in [C]$ and a row $r_0 \in [R]$
so that $M_{r_0,c_0} \neq M_{r,c_0}$ for all $r \neq r_0$ in $[R]$.
\end{claim}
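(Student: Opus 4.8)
Claim~\ref{clm:ID} is quoted from \cite{DBLP:conf/alt/DoliwaFSZ15}, so one option is simply to cite it; here is a short self-contained argument. The plan is to prove the slightly stronger statement obtained by \emph{dropping} the hypothesis that every column is shattered --- this weakening is exactly what makes an induction close, since restricting to a submatrix can turn columns constant. \emph{Statement.} If $M$ is an $R\times C$ sign matrix with $VC(M)\le 1$ and no repeated rows, then there are a column $c_0$ and a row $r_0$ with $M_{r_0,c_0}\ne M_{r,c_0}$ for every $r\ne r_0$; equivalently, some column has a value occurring in exactly one row. Claim~\ref{clm:ID} is the special case in which, in addition, all columns are shattered. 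I would prove this statement by induction on $R$, the case $R\le 1$ being vacuous.

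Assume $R\ge 2$. Since the rows are distinct, some column is non-constant; pick a non-constant column $c_0$ for which the smaller of the two classes into which $c_0$ partitions the rows is as small as possible, call this class $S$, and set $s:=|S|$. If $s=1$ we are done, with $r_0$ the single element of $S$. So assume $s\ge 2$; then $R\ge 2s\ge 4$ and $2\le s\le R/2<R$. The submatrix $M|_S$ (rows $S$, all columns) has distinct rows and VC dimension $\le 1$, so by the inductive hypothesis there are a column $c_1$ and a row $r_0\in S$ such that all rows of $S\setminus\{r_0\}$ share one value $v$ in column $c_1$ while $r_0$ has the other value $\bar v$ there. Note $c_1\ne c_0$, since $c_0$ is constant on $S$.

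Now consider the restriction of the rows of $M$ to the two columns $\{c_0,c_1\}$. All rows of $S$ share a common $c_0$-value, and within $S$ the patterns realized are exactly $(\cdot,v)$ (by $S\setminus\{r_0\}$, which is nonempty) and $(\cdot,\bar v)$ (by $r_0$). All rows outside $S$ carry the \emph{other} $c_0$-value; if two of them disagreed in column $c_1$, then all four patterns on $\{c_0,c_1\}$ would appear, i.e.\ $\{c_0,c_1\}$ would be shattered, contradicting $VC(M)\le 1$. Hence all rows outside $S$ share a single $c_1$-value. If it is $v$, then $r_0$ is the only row of $M$ with $c_1$-value $\bar v$, so $c_1$ is non-constant and the smaller of its two row-classes has size $1<s$, contradicting the choice of $c_0$. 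If it is $\bar v$, then the rows with $c_1$-value $v$ are exactly $S\setminus\{r_0\}$, so $c_1$ is non-constant and its smaller class has size $s-1<s$ (here $s\le R/2$ is used), again a contradiction. Therefore $s=1$, which completes the induction and hence proves Claim~\ref{clm:ID}.

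The only real obstacle is arranging the induction correctly: one must (i) prove the version without ``all columns shattered'', so that the hypothesis survives the passage to $S$, and (ii) observe that the contradiction needs nothing beyond the definition of VC dimension --- ``all four patterns occur on columns $c_0,c_1$'' is literally ``$\{c_0,c_1\}$ is shattered''. (A more structural alternative is to first show that the one-inclusion graph of a class of VC dimension $\le 1$ is a forest --- a shortest cycle would exhibit two coordinates realizing all four patterns --- and then read off $c_0,r_0$ near a leaf; but dealing with disconnectedness of that graph makes this route longer than the inductive one above.)
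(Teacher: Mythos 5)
Your proof is correct. The key engine is the same as the paper's: choose a non-constant column $c_0$ whose minority row-class $S$ is as small as possible, and observe that any column which is non-constant on $S$ must be constant on the complement of $S$ (otherwise $\{c_0,c_1\}$ would be shattered), which forces that column to have a strictly smaller minority class --- contradiction. Where you differ is in how you produce the column $c_1$ that is non-constant on $S$: you prove a strengthened statement (dropping the ``every column is shattered'' hypothesis) and recurse on $M|_S$ to get a column isolating a single row of $S$. The paper gets by without any induction: since the rows of $S$ are distinct and $|S|\ge 2$, \emph{some} column already distinguishes two rows of $S$, and that is all the contradiction needs ($m_{c_1}\le |S|-1$ follows whether or not $c_1$ isolates a single row, by the same computation you do in your second case). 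So your induction, while correctly set up --- you rightly note that the shattering hypothesis must be dropped for the hypothesis to survive restriction to $S$ --- is superfluous; the extremal choice of $c_0$ plus row-distinctness closes the argument in one step. On the other hand, your version does yield the marginally more general statement without the shattering assumption, and makes explicit that the only use of $VC\le 1$ is ``all four patterns on two columns cannot occur,'' which matches the paper's reasoning exactly.
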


\begin{proof}
For every column $c$, denote by
$\text{\it ones}_c$ the number of rows $r \in [R]$ so that $M_{r,c} = 1$,
and let $m_c = \min\{\text{\it ones}_c , R-\text{\it ones}_c\}$.
Assume without loss of generality that
$m_1 \leq m_c$ for all $c$, and that $m_1 = \text{\it ones}_1$.
Since all columns are shattered, $m_1 \geq 1$.
To prove the claim, it suffices to show that $m_1 \leq 1$.

Assume towards a contradiction that $m_1 \geq 2$.
For $b \in \{ 1,-1\}$,
denote by $M^{(b)}$ the submatrix of $M$ consisting of all rows
$r$ so that $M_{r,1} = b$.
The matrix $M^{(1)}$ has at least two rows.
Since all rows are different,
there is a column $c \neq 1$ so that two rows in $M^{(1)}$ differ in $c$.
Specifically, column $c$ is shattered in $M^{(1)}$.
Since $\text{VCdim}(M) = 1$,
it follows that $c$ is not shattered in $M^{(-1)}$,
which means that the value in column $c$
is the same for all rows of the matrix $M^{(-1)}$.
Therefore, $m_{c} < m_{1}$, which is a contradiction.
\end{proof}

The embedding we construct has an extra structure
which allows the induction to go through:
The rows are mapped to points on the unit circle
(i.e.\ set of points $x\in \R^2$ so that $\|x\|=1$).

\begin{lemma}
\label{lem:VCone}
Let $M$ be an $R \times C$ sign matrix of VC dimension one
so that no row appears twice in it.
Then, $M$ can be embedded in $\R^2$ using half spaces,
where each row is mapped to a point on
the unit circle.
\end{lemma}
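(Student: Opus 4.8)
The plan is to prove Lemma~\ref{lem:VCone} by induction on the number of rows $R$, using Claim~\ref{clm:ID} to peel off one row at a time. First I would reduce to the clean situation of the claim: if a column $c$ is \emph{not} shattered, then it takes a constant value on all rows, so the corresponding half space can be taken to be either all of $\R^2$ or empty (equivalently, a half space whose boundary hyperplane is pushed far away), and we may delete such columns and add them back at the end without affecting the embedding of the rows. Thus we may assume every column is shattered. The base case $R\le 2$ is trivial: place the (at most two) distinct rows as any two points on the circle and realize each column, which induces an arbitrary subset of a two–point set, by a half space.

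For the inductive step, assume every column is shattered and $R\ge 3$. By Claim~\ref{clm:ID} there is a column $c_0$ and a row $r_0$ with $M_{r_0,c_0}\neq M_{r,c_0}$ for every $r\neq r_0$; that is, $r_0$ is the unique row with a given sign in column $c_0$. Delete row $r_0$ to get $M'$ with $R-1$ rows. The matrix $M'$ still has VC dimension at most one and still has no repeated rows, so by induction it has an embedding in $\R^2$ by half spaces with all $R-1$ row–points on the unit circle. The key step is then to insert $r_0$: since the points $e_R(r)$, $r\neq r_0$, all lie on the unit circle, I can choose a short open arc of the circle that contains none of them and place $e_R(r_0)$ there. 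Because this point is ``isolated'' on the circle, there is a half space $h_{c_0}$ whose boundary line cuts off exactly the chosen arc — so $h_{c_0}$ contains $e_R(r_0)$ and none of the old points — and this realizes column $c_0$ correctly for the new matrix. For every other column $c\neq c_0$ I must extend the old half space $e_C(c)$ so that it additionally gives the correct sign on $e_R(r_0)$; this is where one uses that $e_R(r_0)$ sits on a free arc. Concretely, I would take the old boundary line for $c$ and perturb/rotate it slightly so that the side on which $e_R(r_0)$ falls flips if and only if needed, while no old row–point changes sides — possible because the old points are at positive distance from the old line, and the new point can be separated from all of them by an arbitrarily small movement of the line near the free arc.

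The main obstacle I anticipate is precisely the simultaneous re-routing of \emph{all} the other half spaces around the newly inserted point: I need to argue that for each $c\neq c_0$ there is a line that separates $e_R(r_0)$ from the old row–points in whichever way column $c$ dictates, without disturbing the classification of the old points. The cleanest way to handle this is to choose the arc for $e_R(r_0)$ small enough (depending on the finite configuration of old points and old lines) that for every direction there is a line through that arc region with all old points strictly on one fixed side; then for each $c$ we either keep the old line (if $e_R(r_0)$ already lands correctly) or replace it by such a separating line oriented to put $e_R(r_0)$ on the required side. A compactness/finiteness argument — there are finitely many columns and finitely many points — guarantees a single arc that works for all of them simultaneously. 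Once this insertion step is justified, adding back the non-shattered columns as trivial (full or empty) half spaces completes the embedding, and the induction — hence the lemma — follows.

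Finally, I would note for the record how this yields Theorem~\ref{thm:VC1}: given any sign matrix $M$ of VC dimension one, delete duplicate rows (this changes neither the VC dimension nor the sign rank), apply Lemma~\ref{lem:VCone} to embed it in $\R^2$ by half spaces, and invoke the general fact recalled in this section that a $d$-dimensional embedding by half spaces gives sign rank at most $d+1$, here $2+1=3$.
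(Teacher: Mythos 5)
There is a genuine gap in the insertion step of your induction. You induct on the number of rows, delete the special row $r_0$ from Claim~\ref{clm:ID}, embed the remaining matrix on the unit circle by the inductive hypothesis, and then try to insert a new point $p=e_R(r_0)$ into an arbitrary free arc while ``perturbing'' every other half space so that it classifies $p$ correctly without disturbing the old points. This last step can be impossible, and no small perturbation of the boundary lines can fix it. The obstruction is topological rather than metric: a half plane meets the unit circle in a single arc, so for every column $c$ the set of row--points lying inside $e_C(c)$ must be \emph{contiguous} in the circular order of the points. Suppose the old points adjacent to the gap you chose are $a$ and $b$, column $c$ requires $M_{r_0,c}=+1$, and the old points with $+1$ in column $c$ form an arc $I_c$ containing neither $a$ nor $b$ (but nonempty). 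Then the required set $I_c\cup\{p\}$ is separated from $I_c$ by $a$ on one side and $b$ on the other, hence is not an arc, and \emph{no} half plane realizes it. The dual failure occurs when $M_{r_0,c}=-1$ but both $a,b\in I_c$ and $I_c$ is not all of the old points. So the correctness of your construction depends on \emph{which} gap you insert $p$ into, and the inductive hypothesis gives you no control over the circular order of the old points; you have not shown that a valid gap exists, and your proposed remedy (shrinking the arc, rotating lines slightly) does not address this at all.

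The paper sidesteps exactly this difficulty by inducting on the number of \emph{columns} instead of rows. After invoking Claim~\ref{clm:ID} it deletes the distinguished column $c_0$ (and, if deleting $c_0$ creates a duplicate of row $1$, also that duplicate row $r_0$), embeds the smaller matrix, and then has to add only \emph{one} new half space --- namely one whose boundary chord cuts off a tiny arc around the single point $x=e_R(1)$, which is always possible --- rather than modify all existing half spaces. The possible new point $x_0$ for $r_0$ agrees with row $1$ on every remaining column, so it may be placed in the same cell as $x$ on the opposite side of the new chord. If you want to keep your row-induction, you would need to strengthen the inductive hypothesis (e.g., to assert that a suitable circular order realizing every column as an arc can be chosen compatibly with the row you removed), which essentially amounts to redoing the column induction; as written, the argument does not go through.
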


The lemma immediately implies
Threorem~\ref{thm:VC1} due to the connection
to sign rank discussed above.

\begin{proof}
The proof follows by induction on $C$.
If $C=1$, the claim trivially holds.

The inductive step:
If there is a column that is not shattered,
then we can remove it, apply induction,
and then add a half space
that either contains or does not contain all points,
as necessary.
So, we can assume all columns are shattered. 
By Claim~\ref{clm:ID},
we can assume without loss of generality that
$M_{1,1} =1$ but $M_{r,1} = -1$ for all $r \neq 1$. 

Denote by $r_0$ the row of $M$ so that
$M_{r_0,c} = M_{1,c}$ for all $c \neq 1$, if such a row exists.
Let $M'$ be the matrix obtained from $M$ by deleting the first column,
and row $r_0$ if it exists, so that no row in $M'$ appears twice.
By induction, there is an appropriate embedding of $M'$ in $\R^2$.

The following is illustrated in Figure~1.
Let $x \in \R^2$ be the point on the unit circle to which the first row in $M'$
was mapped to
(this row corresponds to the first row of $M$ as well).
The half spaces in the embedding of $M'$ are defined by lines,
which mark the borders of the half spaces.
The unit circle intersects these lines in finitely many points.
Let $y,z$ be the two closest points to $x$ among
all these intersection points.
Let $y'$ be the point on the circle in the middle between $x,y$,
and let $z'$ be the point on the circle in the middle between $x,z$.
Add to the configuration one more half space 
which is defined by the line passing through $y',z'$.
If in addition row $r_0$ exists,
then map $r_0$ to the point $x_0$ on the circle which is right in the middle between $y,y'$.

\begin{figure}[h]
\centering
\begin{tikzpicture}[scale=1]
\begin{scope}[xshift=-9cm,yshift = 1cm]
  \coordinate (O) at (1,2);
  \def\radius{4cm}
  \path (O) ++(0.75*180:\radius) coordinate (Y);
  \path (O) ++(0.6*180:\radius) coordinate (YY);
  \path (O) ++(0.45*180:\radius) coordinate (X);
  \path (O) ++(0.675*180:\radius) coordinate (XX);
  \path (O) ++(0.4*180:\radius) coordinate (ZZ);
  \path (O) ++(0.35*180:\radius) coordinate (Z);
  \path (ZZ) ++(5,0) coordinate (ZZZ);
  \path (YY) ++(-5,0) coordinate (YYY);
  \path (ZZZ)++(0,2) coordinate (ZZU);
  \fill [black!10] (YYY) rectangle (ZZU);
  \draw[thick] ([shift=(20:\radius)] O) arc (20:160:\radius);
  \fill (Y) circle[radius=2pt] ++(0.8*180:1em) node {$y$};
  \fill (YY) circle[radius=2pt] ++(0.65*180:1em) node {$y'$};
  \fill (X) circle[radius=2pt] ++(0.5*180:1em) node {$x$};
  \fill (ZZ) circle[radius=2pt] ++(0.35*180:1em) node {$z'$};
  \fill (Z) circle[radius=2pt] ++(0.2*180:1em) node {$z$};
  \fill (XX) circle[radius=2pt] ++(0.9*180:1em) node {$x_0$};
\draw [thick] (YYY) -- (ZZZ);
%
%
\end{scope}
\end{tikzpicture}
\caption{An example of a neighbourhood of $x$.
All other points in embedding of $M'$ are to left of $y$
and right of $z$ on the circle.
The half space defined by the line through $y',z'$ is coloured light gray.}
\end{figure}

This is the construction.  
Its correctness follows by induction, 
by the choice of the last added half space
which separates $x$ from all other points,
and since if $x_0$ exists it 
belongs to the same cell as $x$ in the embedding of $M'$.
\end{proof} 

We conclude the section
by showing that the bound $3$ above cannot be improved.

\begin{proof}[Proof of Claim~\ref{clm:Id4}]
One may deduce the claim from Forster's argument,
but we provide a more elementary argument.
It suffices to consider the case $N=4$.
Consider an arrangement of four half planes in $\R^2$.
These four half planes partition $\R^2$ to eight cones
with different sign signatures,
as illustrated in Figure~2.
Let $M$ be the $8\times 4$ sign matrix 
whose rows are these sign signatures.
The rows of $M$ form a distance preserving 
cycle (i.e.\ the distance along cycle is hamming distance) of length eight in the discrete cube 
of dimension four\footnote{The graph with vertex set $\{\pm 1\}^4$
where every two vectors of hamming distance one are connected by an edge.}.

Finally, the signed identity matrix
is not a submatrix of $M$.
To see this, note that the four rows of the signed identity matrix have
pairwise hamming distance two,
but there are no such four points (not even three points) on this cycle
of length eight. 

\begin{figure}[h]
\centering
\begin{tikzpicture}[scale=1]
\begin{scope}[xshift=-9cm,yshift = 10cm]
 \coordinate (O) at (1,2);
  \def\radius{2.5cm}
  \path (O) ++(0.0*180:\radius) coordinate (A);
  \path (O) ++(1.0*180:\radius) coordinate (AA);
  \path (O) ++(0.1*180:\radius) coordinate (B);
  \path (O) ++(1.1*180:\radius) coordinate (BB);
  \path (O) ++(0.4*180:\radius) coordinate (C);
  \path (O) ++(1.4*180:\radius) coordinate (CC);
  \path (O) ++(0.6*180:\radius) coordinate (D);
  \path (O) ++(1.6*180:\radius) coordinate (DD);
  \path (O) ++(0.0*180:1.1*\radius) coordinate (A1);
  \path (O) ++(1.0*180:1.1*\radius) coordinate (AA1);
  \path (O) ++(0.1*180:1.1*\radius) coordinate (B1);
  \path (O) ++(1.1*180:1.1*\radius) coordinate (BB1);
  \path (O) ++(0.5*180:1.1*\radius) coordinate (C1);
  \path (O) ++(1.5*180:1.1*\radius) coordinate (CC1);
  \path (O) ++(0.7*180:1.1*\radius) coordinate (D1);
  \path (O) ++(1.7*180:1.1*\radius) coordinate (DD1);
  \path (O) ++(0.05*180:\radius) coordinate (A2);
  \path (O) ++(1.05*180:\radius) coordinate (AA2);
  \path (O) ++(0.3*180:\radius) coordinate (B2);
  \path (O) ++(1.3*180:\radius) coordinate (BB2);
  \path (O) ++(0.6*180:\radius) coordinate (C2);
  \path (O) ++(1.6*180:\radius) coordinate (CC2);
  \path (O) ++(0.85*180:\radius) coordinate (D2);
  \path (O) ++(1.85*180:\radius) coordinate (DD2);
  \draw  (A1) -- (AA1);
  \draw  (B1) -- (BB1);
  \draw  (C1) -- (CC1);
  \draw  (D1) -- (DD1);
  \fill (O) circle[radius=2pt] node[below left] {};
\fill (A2) circle[radius=2pt] (A2)++(0.9,0) node {$++++$};
\fill (B2) circle[radius=2pt] (B2)++(0.9,0) node {$+++-$};
\fill (C2) circle[radius=2pt] (C2)++(0,0.4) node {$++--$};
\fill (D2) circle[radius=2pt] (D2)++(-0.9,0) node {$+---$};
\fill (AA2) circle[radius=2pt] (AA2)++(-0.9,0) node {$----$};
\fill (BB2) circle[radius=2pt] (BB2)++(-0.9,0) node {$---+$};
\fill (CC2) circle[radius=2pt] (CC2)++(0,-0.4) node {$--++$};
\fill (DD2) circle[radius=2pt] (DD2)++(0.9,0) node {$-+++$};
\end{scope}
\end{tikzpicture}
\caption{Four lines defining four half planes,
and the corresponding eight sign signatures.}
\end{figure}
\end{proof}

\subsubsection{The upper bound}
In this subsection we prove Theorem \ref{t41}. The proof is short,
but requires several ingredients.
The first one has been mentioned already, and appears in 
\cite{DBLP:conf/focs/AlonFR85}.  For a sign matrix $S$, let
$SC(S)$ denote the maximum number of sign changes (SC) along a column of
$S$. Define $SC^*(S)=\min SC(M)$ where the minimum is taken over all
matrices $M$ obtained from $S$ by a permutation of the rows.
\begin{lemma}[\cite{DBLP:conf/focs/AlonFR85}]
\label{l43}
For any sign matrix $S$,
$\text{sign-rank}(S) \leq SC^*(S)+1$.
\end{lemma}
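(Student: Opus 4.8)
\textbf{Proof plan for Lemma~\ref{l43}.}

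The plan is to directly turn a low-sign-change representation into a low-rank sign matrix using the moment curve. First I would fix a permutation $\pi$ of the rows achieving $SC(M) = SC^*(S)$ for the permuted matrix $M$, and reduce to proving $\text{sign-rank}(M) \leq SC(M) + 1$ (sign rank is invariant under permuting rows and columns, since it only depends on the underlying set system). Write $k = SC(M)$ and let the rows be indexed $1, \ldots, R$ in the order that witnesses this bound; think of each column as a function from $[R] \subset \R$ to $\{\pm 1\}$.

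The key step: for a fixed column $j$, the sequence $M_{1,j}, M_{2,j}, \ldots, M_{R,j}$ changes sign at most $k$ times, so there are at most $k$ ``break points'' $t_1 < t_2 < \cdots < t_\ell$ with $\ell \leq k$ such that the sign is constant on each interval between consecutive break points. Then I would construct a real univariate polynomial $G_j$ of degree at most $k$ whose sign on the integer point $i$ equals $M_{i,j}$: take $G_j(x) = \pm \prod_{s=1}^{\ell} (x - a_s)$ where each $a_s$ is chosen strictly between the last row-index on one side of the $s$-th sign change and the first row-index on the other side, and the global sign $\pm$ is picked to match the sign on the first interval. Since $G_j$ is a product of at most $k$ linear factors, each of which changes sign exactly at $a_s$, the sign of $G_j(i)$ flips exactly at the prescribed places and hence agrees with $M_{i,j}$ for every $i \in [R]$.

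Now I would encode polynomial evaluation as an inner product. Let $\phi : \R \to \R^{k+1}$ be the moment map $\phi(x) = (1, x, x^2, \ldots, x^k)$, and for each column $j$ let $w_j \in \R^{k+1}$ be the coefficient vector of $G_j$. Then $\langle \phi(i), w_j \rangle = G_j(i)$, whose sign is $M_{i,j}$. Define the real matrix $N$ by $N_{i,j} = \langle \phi(i), w_j \rangle$; then $\text{sign}(N) = M$ and $\text{rank}(N) \leq k+1$ because $N = \Phi W$ where $\Phi$ is the $R \times (k+1)$ matrix with rows $\phi(i)$ and $W$ is the $(k+1) \times C$ matrix with columns $w_j$. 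Hence $\text{sign-rank}(M) \leq k + 1 = SC^*(S) + 1$, as required.

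I do not anticipate a serious obstacle here; the argument is essentially the sketch already given in the text preceding Theorem~\ref{thm:projective}, just made precise. The only point needing minor care is the degenerate column (constant sign, no sign changes): there $\ell = 0$ and $G_j$ is the constant polynomial $\pm 1$, which still lies in the span of $\{1, x, \ldots, x^k\}$, so the rank bound is unaffected. One should also note that if some columns are identical or some rows repeat, nothing changes, since we only used that each column, read in the chosen row order, has at most $SC^*(S)$ sign changes.
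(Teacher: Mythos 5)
Your proof is correct and is essentially the paper's own argument: the paper cites \cite{DBLP:conf/focs/AlonFR85} and sketches exactly this moment-curve construction (a univariate polynomial per line with one root between each consecutive pair of indices where the sign flips, then the factorization into a Vandermonde-type matrix times a coefficient matrix). Your handling of the constant-sign column and of the reduction to a fixed row permutation are the right minor points to note, and nothing further is needed.
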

Of course we can replace here rows by columns, but for our purpose 
the above version will do.
The second result we need is a theorem of~\cite{We} (see also
\cite{CW}). As observed, for example, in \cite{MWW}, plugging
in its proof a result of~\cite{Ha} improves it by a
logarithmic factor, yielding the result we describe next.
For a function $g$ mapping positive integers to positive integers,
we say that a sign matrix $S$ satisfies a primal shatter function $g$
if for any integer $t$ and any set $I$ of $m$ columns of $S$, the number of
distinct projections of the rows of $S$ on $I$ is at most $g(t)$. 
The result of Welzl (after its optimization following
\cite{Ha}) can be stated as follows\footnote{The statement in
\cite{We} and the subsequent papers is formulated in terms of 
somewhat different
notions, but it is not difficult to check that it is equivalent to
the statement below.}.

\begin{lemma}[\cite{We}, see also \cite{CW, MWW}]
\label{l44}
Let $S$ be a sign matrix with $N$ rows that satisfies the primal
shatter function $g(t)=c  t^d$ for some constants 
$c \geq 0$ and $d >1$. Then $SC^*(S) \leq O(N^{1-1/d})$.
\end{lemma}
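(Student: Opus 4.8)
The plan is to reduce the bound on $SC^*(S)$ to the existence of a spanning tree (or spanning path) on the rows of $S$ with low \emph{stabbing number}, and then bound the stabbing number via the primal shatter function. First I would identify the rows of $S$ with the ground set of a set system: each column $c$ of $S$ induces a subset $R_c$ of the rows (those $r$ with $S_{r,c}=1$). The hypothesis that $S$ satisfies the primal shatter function $g(t)=ct^d$ says exactly that this set system has primal shatter function $ct^d$; in particular its VC dimension is at most $d$, and, more importantly, for every $m$ the number of distinct traces on any $m$ rows is $O(m^d)$. Welzl's theorem (optimized using the result of \cite{Ha} on $\eps$-approximations / low-crossing partitions) then guarantees a spanning tree $T$ on the rows such that every set $R_c$ — equivalently, the boundary of every half-determined column — is ``stabbed'' by at most $O(N^{1-1/d})$ edges of $T$, where an edge $\{r,r'\}$ is stabbed by column $c$ if exactly one of $r,r'$ lies in $R_c$. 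I would then pass from a spanning tree to a spanning \emph{path}: a depth-first traversal of $T$ visits each edge twice, so it yields a Hamiltonian path (an ordering of the rows) in which each column is stabbed at most twice the tree's stabbing number, i.e.\ still $O(N^{1-1/d})$ times.

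Next I would observe that this ordering of the rows is precisely what controls $SC^*(S)$: reorder the rows of $S$ according to the path. Along any fixed column $c$, a sign change between consecutive rows $r,r'$ in the path occurs exactly when the edge $\{r,r'\}$ is stabbed by $c$. Hence the number of sign changes in column $c$ under this reordering equals the number of path-edges stabbed by $c$, which is $O(N^{1-1/d})$. Taking the maximum over all columns gives $SC(M)\le O(N^{1-1/d})$ for this reordered matrix $M$, and therefore $SC^*(S)\le O(N^{1-1/d})$, as claimed.

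The step I expect to be the main obstacle — and the step that is really the heart of the matter — is establishing the low-stabbing-number spanning tree itself, i.e.\ the content of Welzl's theorem together with the $\log$-factor improvement from \cite{Ha}. Since the lemma is quoted from \cite{We, CW, MWW} and is allowed to be cited, in the write-up I would treat that as a black box and focus the argument on the (routine but worth-spelling-out) translation between the combinatorial language of sign matrices and sign changes on one hand, and the geometric language of set systems, stabbing number, and spanning trees on the other. The only genuine subtlety is matching conventions: checking that ``primal shatter function $g(t)=ct^d$ for the set system of columns'' is the correct hypothesis under which Welzl's bound reads $O(N^{1-1/d})$ (with the implied constant depending on $c$ and $d$), and that the tree-to-path doubling loses only a constant factor; neither of these presents a real difficulty.
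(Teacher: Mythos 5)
Your proposal is correct and follows essentially the same route as the paper: the paper likewise treats the low-stabbing-number spanning tree of Welzl (with Haussler's packing bound removing the log factor) as the cited black box, converts the tree to a row ordering by an Eulerian traversal of the doubled tree (losing a factor of $2$, with the deletion of repeated rows only decreasing sign changes), and identifies sign changes along a column with stabbed path edges. The one convention issue you flag — that the hypothesis is a primal shatter function $ct^d$ rather than VC dimension $d$ — is handled the same way in the paper, whose detailed analysis of Welzl's algorithm rests on Haussler's packing theorem, which applies under the shatter-function hypothesis.
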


\begin{proof}[Proof of Theorem \ref{t41}]
Let $S$ be an $N \times N$ sign matrix of VC dimension $d>1$. By
Sauer's lemma \cite{Sa}, 
it satisfies the primal shatter function $g(t) = t^d$.
Hence, by Lemma \ref{l44}, $SC^*(S) \leq O(N^{1-1/d})$. Therefore, by
Lemma~\ref{l43}, $\text{sign-rank}(S) \leq O(N^{1-1/d})$.
\end{proof}

\paragraph{On the tightness of the argument.}
\label{sec:tightness}
The proof of Theorem \ref{t41} works, with essentially no change, 
for a larger  class of sign matrices than the ones with VC dimension
$d$. Indeed, the proof shows that the sign rank of any $N \times N$
matrix with primal shatter function at most $ct^d$ for some fixed $c$
and $d>1$ is at most $O(N^{1-1/d}).$ In this statement the estimate
is sharp for all integers $d$, up to a logarithmic factor. This
follows from the construction in \cite{ARS}, which supplies 
$N \times N$ boolean matrices so that the number of $1$ entries
in them is at least $\Omega(N^{2-1/d})$, and
they contain no $d$ by $D=(d-1)!+1$ submatrices of $1$'s. 
These matrices satisfy the primal shatter function
$g(t)=D {t \choose d}
+\sum_{i=0}^{d-1} {t \choose i} $ (with room to spare). Indeed, if
we have more than that many  distinct projections  on a set of $t$
columns, we can omit all projections of weight at most $d-1$. Each
additional projection contains $1$'s in at least one set of size
$d$, and the same $d$-set cannot 
be covered  more than $D$ times.
Plugging this matrix in the counting argument that gives a lower
bound for the  sign rank using Lemma~\ref{l46} 
proven below supplies
an $\Omega(N^{1-1/d}/\log N)$ lower bound for the sign rank of
many $N \times N$ matrices with primal shatter function
$O(t^d)$.

We have seen in Lemma~\ref{l43}
that sign rank is at most of order $SC^*$. 
Moreover, for a fixed $r$, many of the $N\times N$ sign
matrices with sign rank at most $r$
also have $SC^*$ at most $r$: Indeed,
a simple counting argument 
shows that the number of $N\times N$ sign matrices $M$
with $SC(M) < r$ is
$$\left(2\cdot\sum_{i=0}^{r-1}{N-1 \choose i}\right)^N=2^{\Omega(rN\log N)},$$
so, the set of $N\times N$ sign matrices with $SC^*(M) < r$ 
is a subset of size $2^{\Omega(rN\log N)}$ 
of all $N\times N$ sign matrices with sign rank at most $r$.

How many $N\times N$ matrices of sign rank at most $r$
are there? by Lemma~\ref{l46} proved in the next section,
this number is at most $2^{O(rN \log N )}$. 
So, the set of matrices with $SC^*<r$ is a rather large
subset of the set of matrices with sign rank at most $r$.

It is reasonable, therefore, to wonder whether an inequality in the other direction holds.
Namely, whether all matrices of sign rank $r$ have $SC^*$ order of $r$.
We now describe an example which shows that this is far
from being true, and also 
demonstrates the tightness of Lemma~\ref{l44}.
Namely, for every constant $d>1$, there are $N \times N$ matrices $S$,
which satisfy the primal shatter function $g(t) = c t^d$ for a constant $c$,
and on the other hand $SC^*(S) \geq \Omega(N^{1-1/d})$.
Consider the grid of points $P = [n]^d$ as a subset of $\R^d$.
Denote by $e_1,\ldots,e_d$ the standard unit vectors in $\R^d$.
For $i \in [n-1]$ and $j \in [d]$, define the hyperplane
$h_{i,j} = \{x : \langle x , e_j \rangle > i + (1/2)\}$.
Denote by $H$ the set of these $d(n-1)$ axis parallel hyperplanes.
Let $S$ be the $P \times H$ sign matrix defined by $P$ and $H$.
That is, $S_{p,h} = 1$ iff $p \in h$.
First, the matrix $S$ satisfies the primal shatter
function $c t^d$, since every family of $t$ hyperplanes
partition $\R^d$ to at most $c t^d$ cells.
Second, we show that 
$$SC^*(S) \geq \frac{n^{d}-1}{d(n-1)} \geq \frac{|P|^{1-1/d}}{d}.$$
Indeed, fix some order on the rows of $S$,
that is, order the points $P = \{p_1,\ldots,p_N\}$ with $N = |P|$.
The key point is that one of the hyperplanes $h_0 \in H$
is so that the number of $i \in [N-1]$ for which
$S_{p_i,h_0} \neq S_{p_{i+1},h_0}$ is at least $(n^{d}-1)/(d(n-1))$:
For each $i$ there is at least one hyperplane $h$
that separates $p_i$ and $p_{i+1}$, that is, for which $S_{p_{i},h} \neq S_{p_{i+1},h}$.
The number of such pairs of points is $n^d-1$,
and the number of hyperplanes is just $d(n-1)$.

\subsubsection{The lower bound}
In this subsection we prove Theorem \ref{t42}. Our approach follows
the one of \cite{DBLP:conf/focs/AlonFR85}, which is based on
known bounds for the number of sign patterns of real polynomials.
A similar approach has been subsequently used by 
\cite{DBLP:journals/jmlr/Ben-DavidES02} to derive lower bounds
for $f(N,d)$ for $d \geq 4$, but here we do it in a slightly more
sophisticated way and get better bounds. 

Although we can 
use the estimate in \cite{DBLP:conf/focs/AlonFR85} for the
number of sign matrices with a given sign rank, we prefer to
describe the argument
by directly applying a result of~\cite{Wa}, described next.

Let $P=(P_1, P_2, \ldots ,P_m)$ be a list of $m$
real polynomials, each in $\ell$ variables.
Define the semi-variety
$$V = V(P) =\{x \in \R^\ell :P_i(x )\neq 0~ \mbox{for all}~  1\le i\le m\}.$$
For $x \in V$, the sign pattern of $P$ at $x$ is the vector
$$(sign(P_1(x)), sign(P_2(x)), \ldots ,sign(P_m(x))) \in \{-1,1\}^m.$$
Let $s(P)$ be the total number
of sign patterns of $P$ as $x$ ranges over all of $V$.
This number is bounded from above by the number of connected components of $V$.

\begin{theorem}[\cite{Wa}]
\label{t45}
Let $P=(P_1, P_2, \ldots ,P_m)$ be a list of 
real polynomials, each in $\ell$ variables and of degree at most $k$.
If $m\geq\ell$
then the number of  connected components of
$V(P)$ 
(and hence also $s(P)$)
is at most  $(4 ekm/\ell )^\ell$. 
\end{theorem}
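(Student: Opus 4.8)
The excerpt already reduces the claim to an upper bound on the number $b_0$ of connected components of $V=V(P)$, since $s(P)\le b_0$. The plan is to run Warren's classical Morse‑theoretic and B\'ezout argument on $V$. First I would pass to a general‑position situation: replace each $P_i$ by $P_i(x)-\epsilon_i$ for generic small $\epsilon_i$, and intersect everything with a large ball (formally, append $R^2-\|x\|^2$ to the list of polynomials). For $\epsilon$ small and $R$ large this neither destroys nor merges any connected component of the original $V$, and one may then assume that each zero set $Z_i=\{P_i=\epsilon_i\}$ is a smooth hypersurface of degree at most $k$, that any $\ell+1$ of the $Z_i$ have empty common intersection, and that the whole arrangement lies in a fixed ball. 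So it suffices to bound the number of cells, i.e.\ open connected components of the complement of $\bigcup_i Z_i$ inside that ball.

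Next comes the counting. Fix a generic linear functional $\xi(x)=\langle \xi,x\rangle$. For each cell $U$, the restriction of $\xi$ to the compact set $\overline U$ attains its minimum at some $x_U$, which lies on the stratum $Z_S:=\bigcap_{i\in S}Z_i$ for the set $S\subseteq[m]$ of constraints active at $x_U$; general position forces $|S|\le\ell$, and $x_U$ is a critical point of $\xi$ restricted to $Z_S$. For generic $\xi$, distinct cells yield distinct pairs $(S,x_U)$ (up to a bounded multiplicity absorbed into the constant), so
\[
b_0(V)\ \le\ \sum_{j=0}^{\ell}\binom{m}{j}\,T(j),
\]
where $T(j)$ is the maximum number of critical points of a generic linear functional on a transverse intersection of $j$ hypersurfaces of degree at most $k$ in $\R^\ell$. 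Writing out the Lagrange conditions as a polynomial system and applying a B\'ezout (Milnor--Thom) estimate bounds $T(j)\le(2k)^\ell$ uniformly in $j$. A short computation using $m\ge\ell$ gives $\sum_{j\le\ell}\binom{m}{j}\le 2(em/\ell)^\ell$, and hence $b_0(V)\le 2(2k)^\ell(em/\ell)^\ell\le(4ekm/\ell)^\ell$; tracking the constants as in Warren's paper recovers the stated bound exactly.

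The main obstacles are the bookkeeping in the first step --- one must check carefully that shrinking $\epsilon$ and enlarging $R$ cannot delete or merge components (this rests on finiteness of the relevant critical values of $\xi$ and of the $P_i$) and that a generic $\xi$ separates cells by their minimizers up to bounded multiplicity --- together with the estimate $T(j)=O(k)^\ell$ for the number of critical points of a linear functional on an intersection of hypersurfaces. For this last point it is crucial \emph{not} to collapse the active hypersurfaces into the single product $g=\prod_i(P_i-\epsilon_i)$ of degree up to $km$: the one‑polynomial route (for instance Milnor's bound $\tfrac12 e(2e-1)^{\ell-1}$ on the number of components of $\{g=0\}$) would yield only $(km)^{O(\ell)}$ and lose the decisive $\ell^{-\ell}$ factor, which in the correct argument arises precisely from the binomial coefficient $\binom{m}{\le\ell}$.
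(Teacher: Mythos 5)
The paper offers no proof of Theorem~\ref{t45}: it is quoted from Warren's 1968 paper and used as a black box, so what is at issue is only whether your reconstruction of Warren's argument is sound. The architecture is right (perturb to general position, charge each cell to a minimizer of a generic linear functional lying on a stratum cut out by at most $\ell$ hypersurfaces, bound the critical points by a B\'ezout/Milnor count, sum $\binom{m}{j}$ over strata). But your first step is false as stated: a \emph{one-sided} generic perturbation $P_i\mapsto P_i-\epsilon_i$ can merge components of $V$. Already for $\ell=m=1$ and $P_1(x)=x^2$, the set $V=\{x:x^2\neq 0\}$ has two components while $\{x:x^2\neq\epsilon\}$ has one whenever $\epsilon<0$; for $P_1(x)=x^2(x^2-1)$ neither sign of $\epsilon$ avoids a merger. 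The correct move (Warren's, and Milnor--Thom's) is the \emph{two-sided} perturbation: replace each zero set by $\{P_i^2=\epsilon_i\}$ with $\epsilon_i>0$, i.e.\ by the pair of hypersurfaces $\{P_i=\pm\sqrt{\epsilon_i}\}$. Then $\{x: P_i(x)^2>\epsilon_i\ \forall i\}$ is an open subset of $V$ that meets every component of $V$, and each of its components lies in a single component of $V$; this is what actually gives $b_0(V)\le b_0\bigl(\R^\ell\setminus\bigcup_i\{P_i^2=\epsilon_i\}\bigr)$.

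This is not mere bookkeeping, because the fix doubles the number of hypersurfaces from $m$ to $2m$ (equivalently the degree from $k$ to $2k$), and that doubling is precisely where the constant $4=2\cdot 2$ in $(4ekm/\ell)^\ell$ comes from. Your tally stops at $2(2ekm/\ell)^\ell$, leaving one spare factor of $2^\ell$, and that single factor cannot simultaneously absorb (i) the doubling $m\to 2m$, (ii) the multiplicity with which a critical point on a $j$-fold transverse intersection can serve as the minimizer of up to $2^{j}$ adjacent cells, and (iii) the slack needed for $T(j)\le(2k)^\ell$, which already requires the multihomogeneous B\'ezout count $k^{j}(k-1)^{\ell-j}\binom{\ell}{j}$ for the Lagrange system rather than the naive bound $k^{\ell+j}$ (the latter can exceed $(2k)^\ell$). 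The pieces do fit in Warren's paper, but the fit is tight, so the closing appeal to ``tracking the constants'' does not stand on its own until the perturbation step is replaced by the two-sided one.
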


An $N \times N$ matrix $M$ is of rank at most $r$ iff it can be written as
a product $M=M_1 \cdot M_2$ of an $N \times r$ matrix $M_1$ by
an $r \times N$ matrix $M_2$. Therefore, each entry of $M$ is a quadratic
polynomial in the $2Nr$ variables describing the entries of
$M_1$ and $M_2$. We thus deduce the following from Warren's Theorem
stated above. A similar argument has been used by~\cite{DBLP:journals/ml/Ben-DavidL98}.

\begin{lemma}
\label{l46}
Let $r\leq N/2$. Then,
the number of $N \times N$ sign matrices of sign rank at most
$r$ does not exceed
$(O(N/r))^{2Nr}  \leq 2^{O(rN \log N )}$.  
\end{lemma}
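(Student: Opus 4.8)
The plan is to apply Warren's theorem (Theorem~\ref{t45}) to a polynomial system whose sign patterns on the relevant semi-variety enumerate exactly the $N \times N$ sign matrices of sign rank at most $r$. First I would recall the factorization characterization of low rank: a real $N \times N$ matrix $M$ satisfies $\text{rank}(M) \leq r$ if and only if $M = M_1 M_2$ for some $N \times r$ matrix $M_1$ and some $r \times N$ matrix $M_2$. Collect the $2Nr$ entries of $M_1$ and $M_2$ into a single vector of variables $x \in \R^{2Nr}$, so $\ell = 2Nr$. For each pair $(i,j)\in[N]\times[N]$, the $(i,j)$ entry of $M_1 M_2$ is the bilinear (hence degree-$2$) polynomial $P_{i,j}(x) = \sum_{k=1}^{r} (M_1)_{i,k}(M_2)_{k,j}$. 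Let $P$ be the list of these $m = N^2$ polynomials, each of degree at most $k = 2$, and let $V = V(P)$ be the semi-variety on which none of them vanishes.

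Next I would observe that every sign matrix $S$ of sign rank at most $r$ arises as a sign pattern of $P$ on $V$. Indeed, by definition of sign rank there is a real matrix $M$ with $\text{sign}(M) = S$ and $\text{rank}(M) \leq r$; factoring $M = M_1 M_2$ and letting $x$ be the corresponding point of $\R^{2Nr}$, we get $P_{i,j}(x) = M_{i,j} \neq 0$ for all $i,j$, so $x \in V$, and the sign pattern of $P$ at $x$, read as an element of $\{-1,1\}^{N^2}$, is exactly $S$. Distinct sign matrices thus give distinct sign patterns, so the number of $N \times N$ sign matrices of sign rank at most $r$ is at most $s(P)$.

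Finally I would invoke Theorem~\ref{t45}. Its hypothesis $m \geq \ell$ reads $N^2 \geq 2Nr$, i.e.\ $r \leq N/2$, which is precisely the assumption of the lemma; hence $s(P) \leq (4ekm/\ell)^\ell = \bigl(4e\cdot 2\cdot N^2/(2Nr)\bigr)^{2Nr} = (4eN/r)^{2Nr} = (O(N/r))^{2Nr}$. Taking base-$2$ logarithms, $2Nr\log_2(O(N/r)) \leq 2Nr\cdot O(\log N) = O(rN\log N)$, using $N/r \leq N$, which yields the stated bound $2^{O(rN\log N)}$.

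There is no real obstacle in this argument; it is essentially bookkeeping once Warren's theorem is available. The only points that warrant a moment of care are (i) expressing the rank-$\leq r$ condition via an explicit factorization so that the matrix entries become polynomials of bounded degree in a bounded number of variables, and (ii) checking that the hypothesis $m \geq \ell$ of Theorem~\ref{t45} coincides with the assumption $r \leq N/2$, so that the estimate applies.
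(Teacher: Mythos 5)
Your proposal is correct and follows essentially the same route as the paper: factor a rank-$\le r$ matrix as $M_1M_2$, view the $N^2$ entries as degree-$2$ polynomials in the $2Nr$ entries of $M_1,M_2$, and apply Warren's theorem (Theorem~\ref{t45}), with $r\le N/2$ giving the hypothesis $m\ge\ell$. The bookkeeping, including the bound $(4eN/r)^{2Nr}=(O(N/r))^{2Nr}\le 2^{O(rN\log N)}$, matches the paper's argument.
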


For a fixed $r$, this bound
for the logarithm of the above quantity
is tight up to a constant factor: 
As argued in Subsection~\ref{sec:tightness}, there are at least
some $2^{\Omega(rN\log N)}$ matrices of sign rank $r$.

In order to derive the statement of Theorem \ref{t42} from the last
lemma it suffices to show that the number of $N \times N$
sign matrices of VC dimension $d$ is sufficiently 
large. We proceed to do so. 
It is more convenient to discuss boolean matrices in what follows
(instead of their signed versions).
\begin{proof}[Proof of Theorem~\ref{t42}]
There are $4$ parts as follows.

\noindent
1. The case $d=2$: 
Consider the $N \times N$ incidence matrix $A$ of the projective plane with
$N$ points and $N$ lines, considered in the previous sections.
The number of $1$ entries in $A$ is $(1+o(1)) N^{3/2}$,
and it does not contain $J_{2 \times 2}$ 
(the $2 \times 2$ all $1$ matrix) as a submatrix, 
since there is only one line  passing through any two given points. 
Therefore, any matrix obtained  from
it by replacing ones by zeros has VC dimension at most $2$, since 
every matrix of VC dimension $3$ must contain $J_{2 \times 2}$
as a submatrix.
This gives us
$2^{(1+o(1))N^{3/2}}$ distinct $N \times N$ sign matrices of
VC dimension at most $2$. Lemma~\ref{l46} therefore establishes the
assertion of Theorem~\ref{t42}, part 1.
\vspace{0.1cm}

\noindent
2. The case $d=3$:
Call a $5 \times 4$ binary matrix heavy if its rows are the all
$1$ row and the $4$ rows with Hamming weight $3$. Call a $5 \times 4$
boolean matrix heavy-dominating if there is a heavy matrix which 
is smaller or equal to it in every entry.

We claim that there is a boolean $N \times N$ matrix
$B$ so that the number of $1$ entries in it
is at least $\Omega(N^{23/15})$, and it does not
contain any heavy-dominating $5 \times 4$ submatrix. 
Given such a matrix $B$,
any matrix obtained
from $B$ by replacing some of the ones by zeros
have VC dimension at most $3$.
This 
implies part 2 of Theorem~\ref{t42}, using
Lemma \ref{l46} as before.

The existence of $B$ is proved by a
probabilistic argument. Let $C$ be a random binary matrix  in which
each entry, randomly and  independently, is $1$ with probability
$p=\frac{1}{2N^{7/15}}$. Let $X$ be the random variable counting the
number of $1$ entries of $C$ minus twice the
number of $5 \times 4$ heavy-dominant submatrices $C$ contains. 
By linearity of expectation, 
$$
\E(X) \geq N^2p-2 N^{4+5}p^{1 \cdot 4+4 \cdot 3} 
=\Omega(N^{23/15}).
$$
Fix a matrix $C$ for which the value of $X$ is at least its
expectation. Replace at most two $1$ entries by $0$
in each heavy-dominant $5 \times 4$ submatrix in $C$
to get the required matrix $B$.
\vspace{0.1cm}

\noindent
3. The case $d=4$:
The basic idea is as before, but here there is an explicit
construction that beats the probabilistic one. Indeed,~\cite{Br} constructed an $N \times N$ boolean matrix $B$ 
so that the number of $1$ entries in $B$ is at least
$\Omega(N^{5/3})$ 
and it does not contain $J_{3 \times 3}$ as a submatrix
(see also \cite{ARS} for another construction). 
No set of $5$ rows in every matrix obtained from this one by
replacing $1$'s by $0$'s can be shattered, implying the desired
result as before.
\vspace{0.1cm}

\noindent
4. The case $d>4$:
The proof here is similar to the one in part 2. We prove by a
probabilistic argument that there is an $N \times N$ binary matrix $B$ 
so that the number of $1$ entries in it is at least 
$$\Omega(N^{2-(d^2+5d+2)/(d^3+2d^2+3d)})$$
and it contains no heavy-dominant submatrix.
Here, heavy-dominant means a $1+(d+1)+{{d+1} \choose 2}$ by
$d+1$ matrix that is bigger or equal in each entry
than the matrix whose rows are all the distinct vectors of length $d+1$
and Hamming weight at least $d-1$. 
Any matrix obtained by replacing $1$'s by $0$'s in $B$
cannot have VC dimension exceeding $d$. The result follows,
again, from Lemma \ref{l46}. 

We start as before with a random matrix $C$ in
which each entry, randomly and independently, is chosen to be $1$
with probability 
\begin{align*}
p 
& = \frac{1}{2} \cdot 
N^{\frac{2-1-(d+1)-{{d+1} \choose 2}-(d+1)}{1 \cdot (d+1)+
(d+1) \cdot d +{{d+1} \choose 2} \cdot (d-1) - 1}} 
= \frac{1}{2N^{(d^2+5d+2)/(d^3+2d^2+3d)}} .
\end{align*}
Let $X$ be the random variable counting the
number of $1$ entries of $C$ minus three times the
number of heavy-dominant submatrices $C$ contains. 
As before, $\E(X) \geq \Omega(N^2 p)$,
and by deleting some of the $1$'s in $C$ we get $B$.
\end{proof}

\subsection{Sign rank and spectral gaps}
\label{sec:LBproof}

The lower bound on the sign rank uses Forster's argument 
\cite{DBLP:conf/coco/Forster01}, who showed how to 
relate sign rank to spectral norm.
He proved that if $S$ is an $N \times N$ sign matrix then
$$\text{sign-rank}(S) \geq  \frac{N}{\|S\|}.$$
We would like to apply Forster's theorem to the matrix $S$
in our explicit examples.
The spectral norm of $S$, however, is too large to be useful: 
If $S$ is  
$\Delta \leq N/3$ regular and $x$ is the all $1$ vector then
$Sx = (2\Delta-N) x$ and
so $\|S\| \geq N/3$.
Applying Forster's theorem to $S$ yields that its sign rank is $\Omega(1)$,
which is not informative.

Our solution is based on the observation that Forster's argument actually proves a stronger statement.
His proof works as long as the entries of the matrix are not too close to zero,
as was already noticed in \cite{DBLP:conf/fsttcs/ForsterKLMSS01}.
We therefore use a variant of the spectral norm of a sign matrix $S$ 
which we call star norm and denote by\footnote{
The minimizer belongs to a closed subset of the bounded 
set $\{M: \|M\|\leq \|S\| \}$.}
$$\sn{S} = \min \{ \| M \| : M_{i,j} S_{i,j} \geq 1
\text{ for all $i,j$} \}.$$
Three comments seem in place.
(i) We do not think of the star norm as a norm. 
(ii) It is always at most the spectral norm, $\sn{S} \leq \|S\|$.
(iii) Every $M$ in the above minimum
satisfies $\text{sign-rank}(M) = \text{sign-rank}(S)$.
\begin{theorem}[\cite{DBLP:conf/fsttcs/ForsterKLMSS01}]
\label{thm:Forster}
Let $S$ be an $N \times N$ sign matrix.
Then,
$$\text{sign-rank}(S) \geq  \frac{N}{\sn{S}}.$$
\end{theorem}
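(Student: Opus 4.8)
The plan is to follow Forster's argument from \cite{DBLP:conf/coco/Forster01}, changing only the matrix against which we pair in the concluding double-counting step: where Forster proves $\text{sign-rank}(S)\ge N/\|S\|$ by pairing with $S$ itself, I would pair with a matrix $M$ attaining the star norm, i.e.\ with $M_{i,j}S_{i,j}\ge 1$ for all $i,j$ and $\|M\|=\sn{S}$. Two inputs are needed. The elementary one: an $N\times N$ matrix of rank $d$ can be written as $M'_{i,j}=\langle a_i,b_j\rangle$ with $a_i,b_j\in\R^d$ and the $b_j$ spanning $\R^d$. The substantive one is Forster's radial isotropic position lemma: any $b_1,\dots,b_N\in\R^d$ in general position can be mapped by an invertible linear $T$ so that the unit vectors $v_j:=Tb_j/\|Tb_j\|$ satisfy $\sum_{j}v_jv_j^\top=\tfrac{N}{d}I_d$.

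Concretely, first I would set $d:=\text{sign-rank}(S)$, fix a rank-$d$ matrix $M'$ with $\text{sign}(M')=S$, and write $M'_{i,j}=\langle a_i,b_j\rangle$ as above. A harmless perturbation of the $a_i,b_j$ inside $\R^d$ puts the $b_j$ in general position without raising the rank and, being small, without changing any sign. Applying Forster's lemma to $\{b_j\}$ produces $T$; set $\hat c_i:=c_i/\|c_i\|$ with $c_i:=(T^{-1})^\top a_i$ (nonzero, since no row of $M'$ vanishes). Then $\langle\hat c_i,v_j\rangle$ has the sign of $\langle c_i,Tb_j\rangle=\langle a_i,b_j\rangle=M'_{i,j}$, so $\text{sign}\langle\hat c_i,v_j\rangle=S_{i,j}$, while $|\langle\hat c_i,v_j\rangle|\le 1$.

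Now the double counting. For the lower bound, since $\text{sign}(M_{i,j})=S_{i,j}=\text{sign}\langle\hat c_i,v_j\rangle$ and $|M_{i,j}|\ge 1\ge|\langle\hat c_i,v_j\rangle|$, each term obeys $M_{i,j}\langle\hat c_i,v_j\rangle=|M_{i,j}|\,|\langle\hat c_i,v_j\rangle|\ge\langle\hat c_i,v_j\rangle^2$, whence, using isotropy,
\[
\sum_{i,j}M_{i,j}\langle\hat c_i,v_j\rangle\ \ge\ \sum_i\hat c_i^\top\Big(\sum_j v_jv_j^\top\Big)\hat c_i\ =\ \frac{N}{d}\sum_i\|\hat c_i\|^2\ =\ \frac{N^2}{d}.
\]
For the upper bound, let $P,Q$ be the $N\times d$ matrices with rows $\hat c_i$ and $v_j$; then $\sum_{i,j}M_{i,j}\langle\hat c_i,v_j\rangle=\langle MQ,P\rangle_F\le\|MQ\|_F\|P\|_F\le\|M\|\,\|Q\|_F\|P\|_F=\sn{S}\cdot N$, since $\|P\|_F^2=\|Q\|_F^2=N$. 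Combining, $N^2/d\le\sn{S}\cdot N$, i.e.\ $\text{sign-rank}(S)=d\ge N/\sn{S}$.

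The one genuinely nontrivial step, and what I expect to be the main obstacle to write out carefully, is invoking Forster's radial isotropic position lemma (from \cite{DBLP:conf/coco/Forster01,DBLP:conf/fsttcs/ForsterKLMSS01}), whose proof is a compactness/fixed-point argument and whose hypotheses are exactly what forces the general-position reduction above. Everything else is routine trace manipulation plus Cauchy--Schwarz; the only new observation relative to Forster's original is that the bound $|M_{i,j}|\ge 1$ --- precisely ``the entries are not too close to zero'' --- lets the identical computation go through with $\sn{S}$ replacing $\|S\|$.
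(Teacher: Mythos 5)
Your proposal is correct and follows essentially the same route as the paper: take a matrix $M$ attaining $\sn{S}$, use Forster's isotropic-position lemma (after a perturbation to general position) on one side of a rank-$d$ sign realization, and double-count $\sum_{i,j}M_{i,j}\langle \hat c_i,v_j\rangle$, getting $N^2/d$ from isotropy together with $|M_{i,j}|\ge 1$ and $\sn{S}\cdot N$ from Cauchy--Schwarz. Your Frobenius-norm packaging of the upper bound is just a repackaging of the paper's coordinate-wise Cauchy--Schwarz, so there is no substantive difference.
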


For completeness,
in Section~\ref{sec:ForsterProof} we provide a short proof
of this theorem
(which uses the main lemma from \cite{DBLP:conf/coco/Forster01}
as a black box).
To get any improvement using this theorem,
we must have $\sn{S} \ll \|S\|$.
It is not a priori obvious that there is a matrix $S$ for which
this holds.
The following lemma shows that spectral gaps
yield such examples.

\begin{theorem}
\label{thm:normV}
Let $S$ be a $\Delta$ regular $N \times N$ sign matrix
with $\Delta \leq N/2$,
and $B$ its boolean version.
Then,
$$\sn{S} \leq \frac{N \cdot \sigma_2(B)}{\Delta}.$$
\end{theorem}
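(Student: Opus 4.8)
The plan is to exhibit an explicit near-optimal competitor in the minimization defining $\sn{S}$. Write $B$ for the boolean version of $S$, so that $S = 2B - J$ with $J$ the all-ones matrix. Since $B$ is $\Delta$-regular, $B\mathbf{1} = \Delta\mathbf{1}$ and $B^{T}\mathbf{1} = \Delta\mathbf{1}$, where $\mathbf{1}$ is the all-ones vector. The candidate matrix will be a suitable scaling of the ``centered adjacency matrix'' $B - \frac{\Delta}{N}J$.

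First I would check the sign-and-magnitude constraint. Because $B$ is a $0/1$ matrix, the entries of $B - \frac{\Delta}{N}J$ equal $1 - \frac{\Delta}{N}$ wherever $S_{i,j}=+1$ and $-\frac{\Delta}{N}$ wherever $S_{i,j}=-1$; in both cases the product with $S_{i,j}$ is positive, and since $\Delta \leq N/2$ one has $\min\{1-\frac{\Delta}{N},\frac{\Delta}{N}\} = \frac{\Delta}{N}$, so every entry has product at least $\frac{\Delta}{N}$ with the corresponding sign. Hence $M := \frac{N}{\Delta}\bigl(B - \frac{\Delta}{N}J\bigr)$ satisfies $M_{i,j}S_{i,j} \geq 1$ for all $i,j$, so $M$ is admissible in the minimum defining $\sn{S}$ and $\sn{S} \leq \|M\| = \frac{N}{\Delta}\,\bigl\|B - \frac{\Delta}{N}J\bigr\|$.

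Next I would evaluate $\bigl\|B - \frac{\Delta}{N}J\bigr\|$ and show it equals $\sigma_2(B)$. As $B$ is nonnegative with all row and column sums equal to $\Delta$, we get $\|B\|_{1\to 1}=\|B\|_{\infty\to\infty}=\Delta$, hence $\|B\| \leq \Delta$, and together with $B\mathbf{1}=\Delta\mathbf{1}$ this gives $\sigma_1(B)=\Delta$ with $\mathbf{1}/\sqrt{N}$ a top singular vector (both on the left and on the right). Now $J$ is $N$ times the rank-one orthogonal projection onto $\mathbf{1}$, so $\bigl(B - \frac{\Delta}{N}J\bigr)\mathbf{1} = \Delta\mathbf{1} - \Delta\mathbf{1} = 0$ and likewise for the transpose, while on the invariant subspace $\mathbf{1}^{\perp}$ (invariant under $B$ by regularity) the term $\frac{\Delta}{N}J$ vanishes and $B - \frac{\Delta}{N}J$ acts exactly as $B$. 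Thus $B - \frac{\Delta}{N}J$ has the block form $0 \oplus (B\restriction_{\mathbf{1}^{\perp}})$, so its operator norm is the operator norm of $B$ restricted to $\mathbf{1}^{\perp}$, which is precisely $\sigma_2(B)$. Substituting gives $\sn{S} \leq \frac{N}{\Delta}\,\sigma_2(B)$, as claimed.

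The only delicate point I anticipate is the linear-algebraic bookkeeping of singular values versus eigenvalues: one must be sure that $\sigma_1(B)=\Delta$ (so that the remaining spectrum genuinely starts at $\sigma_2(B)$) and that $\mathbf{1}$ being simultaneously a singular vector of $B$ and the range of $J$ is exactly what allows the clean block decomposition above. All of this is forced by $\Delta$-regularity together with the Perron--Frobenius-type norm bound $\|B\|_{1\to1}=\|B\|_{\infty\to\infty}=\Delta$; the combinatorial hypotheses used elsewhere in the paper play no role here, so once the setup is in place the argument is routine.
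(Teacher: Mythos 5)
Your proposal is correct and follows essentially the same route as the paper: the candidate matrix $\frac{N}{\Delta}\bigl(B-\frac{\Delta}{N}J\bigr)$ is literally the paper's $M=\frac{N}{\Delta}B-J$, the entrywise check is identical, and the norm computation via the block decomposition on $\mathrm{span}(\mathbf{1})\oplus\mathbf{1}^{\perp}$ matches the paper's orthogonal-projection argument. The only cosmetic difference is that you bound $\|B\|\leq\Delta$ by interpolating the $1\to1$ and $\infty\to\infty$ norms, whereas the paper decomposes $B$ into a sum of $\Delta$ permutation matrices.
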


In other words,
every regular sign matrix whose boolean version has a spectral gap
has a small star norm.
Theorem~\ref{thm:Forster} and Theorem~\ref{thm:normV}
immediately imply Theorem~\ref{thm:SpecVsSignRank}.
In Section~\ref{sec:IntroFinGeo},
we provided concrete examples of matrices with a spectral gap,
that have applications in communication complexity,
learning theory and geometry.

\begin{proof}[Proof of Theorem~\ref{thm:normV}]
Define the matrix
$$M = \frac{N}{\Delta} B - J.$$
Observe that since $N \geq 2 \Delta$ it follows that
$M_{i,j} S_{i,j} \geq 1$ for all $i,j$.
So, 
$$\sn{S} \leq \|M\|.$$
Since $B$ is regular, the all $1$ vector $y$
is a right singular vector of $B$ with singular value $\Delta$.
Specifically, $M y=0$.
For every $x$, write $x=x_1+x_2$ where $x_1$ is the projection
of $x$ on $y$ and $x_2$ is orthogonal to $y$.
Thus,
$$\langle M x , M x\rangle
= \langle M x_2 , M x_2 \rangle
= \frac{N^2}{\Delta^2} \langle B x_2 , B x_2 \rangle.$$
Note that
$\|B\| \leq \Delta$ (and hence $\| B \|=\Delta$). Indeed, since
$B$ is regular,
there are $\Delta$ permutation matrices $B^{(1)},\ldots,B^{(\Delta)}$
so that $B$ is their sum.
The spectral norm of each $B^{(i)}$ is one.
The desired bound follows by the triangle inequality.

Finally, since $x_2$ is orthogonal to $y$,
$$\|B x_2\| \leq \sigma_2(B) \cdot \|x_2\| \leq \sigma_2(B) \cdot \|x\|.$$
So,
$$\|M\| \leq \frac{N \cdot \sigma_2(B)}{\Delta}.$$
\end{proof}

\subsubsection{Limitations}
It is interesting to understand whether
the approach above can give a better lower bound on sign rank.
There are two parts to the argument: 
Forster's argument, and the upper bound on $\sn{S}$.
We can try to separately improve each of the two parts.

Any improvement over Forster's argument
would be very interesting,
but as mentioned there is no significant improvement
over it even without the restriction induced by VC dimension,
so we do not discuss it further.

To improve the second part,
we would like to find examples with the biggest spectral gap possible.
The Alon-Boppana theorem \cite{zbMATH00031582} 
optimally describes limitations on spectral gaps.
The second eigenvalue $\sigma$
of a $\Delta$ regular graph is not too small,
$$\sigma \geq 2 \sqrt{\Delta-1} - o(1),$$
where the $o(1)$ term vanishes when $N$ tends to infinity
(a similar statement holds when the diameter is 
large \cite{zbMATH00031582}).
Specifically, the best lower bound on sign rank 
this approach can yield is roughly $\sqrt{\Delta}/2$,
at least when $\Delta \leq N^{o(1)}$.

But what about general lower bounds on $\sn{S}$?
It is well known that any $N \times N$
sign matrix $S$ satisfies $\|S\| \geq \sqrt{N}$.
We prove a generalization of this statement.

\begin{lemma}
\label{lem:min*norm}
Let $S$ be an $N \times N$ sign matrix.
For $i \in [N]$, let $\gamma_i$ be the minimum between the number of $1$'s and the number of $-1$'s in the i'th row.
Let $\gamma = \gamma(S) = \max \{\gamma_i : i \in [N]\}$.
Then,
$$\sn{S} \geq \frac{  N-\gamma}{\sqrt{\gamma}+1}.$$
\end{lemma}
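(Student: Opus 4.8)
**Proof proposal for Lemma~\ref{lem:min*norm}.**

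The plan is to lower bound $\sn{S}$ by exhibiting a single test vector $x$ on which any valid $M$ (that is, $M_{i,j}S_{i,j}\geq 1$ for all $i,j$) must have $\|Mx\|$ large relative to $\|x\|$. The natural choice is the all-ones vector $x = \mathbf{1}$, since $\|x\| = \sqrt{N}$. Fix any $M$ with $M_{i,j}S_{i,j}\geq 1$. For a fixed row $i$, split the columns into $P_i = \{j : S_{i,j}=1\}$ and $Q_i = \{j : S_{i,j}=-1\}$; by definition $\min\{|P_i|,|Q_i|\}\leq \gamma$. On $P_i$ we have $M_{i,j}\geq 1$ and on $Q_i$ we have $M_{i,j}\leq -1$, so $(Mx)_i = \sum_j M_{i,j}$ is a sum of $|P_i|$ terms each $\geq 1$ and $|Q_i|$ terms each $\leq -1$; hence $|(Mx)_i| \geq \bigl| \sum_{j\in P_i} M_{i,j} + \sum_{j\in Q_i} M_{i,j} \bigr|$, but this alone does not immediately give a good bound because the large-cardinality side could be nearly cancelled. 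The fix is to also use the \emph{other} coordinates of $M$ more cleverly, or to instead pick the test vector adaptively.

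The cleaner route I would take: rather than $\mathbf 1$, use Forster's normalization idea implicitly, but more directly, bound $\|M\|^2 \geq \|M y\|^2/\|y\|^2$ for $y$ chosen per-row is not possible since $y$ is global. So I would instead argue via the Frobenius-type / row-norm bound: $\|M\|^2 \geq \frac{1}{N}\|M\|_F^2 = \frac{1}{N}\sum_i \|(\text{row } i \text{ of } M)\|^2$. For a row $i$ with (say) $|Q_i|\leq\gamma$, the entries on $P_i$ contribute at least $|P_i| \geq N-\gamma$ to $\sum_j M_{i,j}^2$, giving $\|M\|_F^2 \geq N(N-\gamma)$ and so $\sn{S}\geq \sqrt{N-\gamma}$ — but that is weaker than the claimed $\frac{N-\gamma}{\sqrt\gamma+1}$ when $\gamma$ is small. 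So the Frobenius bound is not enough; I expect the real argument must use the test vector $\mathbf 1$ together with the constraint that the few "minus" entries in each row cannot individually be too large, which they \emph{can} be — unless one also controls $\|M\|$ from the column side.

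Therefore the argument I would actually write goes: pick $x = \mathbf 1$ and estimate $\langle \mathbf 1, M^T M \mathbf 1\rangle = \|M\mathbf 1\|^2 = \sum_i (Mx)_i^2$, while noting that for row $i$, writing $a = |P_i|$ minus the (bounded) negative contribution: split off each small side. Concretely, if $|Q_i|\leq\gamma$, then $(Mx)_i = \sum_{j\in P_i}M_{i,j} - \sum_{j\in Q_i}|M_{i,j}| \geq |P_i| - \sum_{j\in Q_i}|M_{i,j}|$. Now $\sum_{j\in Q_i}|M_{i,j}| \leq \sqrt{|Q_i|}\,\bigl(\sum_{j\in Q_i}M_{i,j}^2\bigr)^{1/2} \leq \sqrt\gamma\,\|(\text{row }i)\|\leq \sqrt\gamma\,\|M\|_{2\to 2}$-ish; more usefully, summing the columns, $\sum_i \sum_{j\in Q_i}|M_{i,j}| \leq \|M\|\cdot(\text{something})$. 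This is where the main obstacle lies: balancing the "mass on the big side is $\geq N-\gamma$ per row" against "the small side can cancel it, but only using total $\ell_2$-budget $\|M\|\sqrt{N}$". I would close it by Cauchy–Schwarz: $\|M\mathbf 1\| \geq \|(\,\text{big-side contributions}\,)\| - \|(\,\text{small-side contributions}\,)\| \geq \sqrt{N}(N-\gamma) \cdot \frac{1}{\sqrt N} - \sqrt\gamma\|M\|$, i.e. $\|M\|\sqrt N \geq (N-\gamma)\sqrt N \cdot \tfrac{1}{\sqrt N}- \sqrt\gamma\|M\|$, rearranged to $\sn{S} = \|M\| \geq \frac{N-\gamma}{\sqrt\gamma+1}$. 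The hard part is making the small-side estimate $\|(\text{small-side contributions vector})\| \leq \sqrt\gamma\,\|M\|$ rigorous: it should follow because that vector is $D S' M \mathbf 1$-like for a $0/1$ mask picking $\leq\gamma$ entries per row, hence has operator-to-$\ell_2$ norm at most $\sqrt\gamma$ times $\|M\|$ acting on $\mathbf 1$ — essentially because a matrix with at most $\gamma$ nonzeros per row, each of absolute value at most the corresponding entry of $M$, composed appropriately, has norm at most $\sqrt\gamma\,\|M\|$.
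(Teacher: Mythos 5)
Your overall strategy --- test $M$ against the all-ones vector, split $M\mathbf 1$ into a ``majority-side'' vector $u$ and a ``minority-side'' vector $v$, and beat the cancellation with a norm bound on $v$ --- is genuinely different from the paper's proof and can be made to work, but as written your final chain of inequalities does not produce the stated bound, and the one estimate you yourself flag as ``the hard part'' is asserted rather than proved. Concretely: after flipping rows so that the minority entries of each row of $S$ are the $-1$'s (this is harmless, since multiplying rows of $M$ by $-1$ is multiplication by an orthogonal matrix), set $u_i=\sum_{j\in P_i}M_{i,j}\ge N-\gamma$ and $v_i=\sum_{j\in Q_i}|M_{i,j}|$, so $M\mathbf 1=u-v$. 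The correct normalizations are $\|u\|\ge\sqrt N\,(N-\gamma)$, and, by Cauchy--Schwarz applied \emph{row by row}, $v_i^2\le|Q_i|\sum_{j\in Q_i}M_{i,j}^2\le\gamma\sum_jM_{i,j}^2$, whence $\|v\|^2\le\gamma\|M\|_F^2\le\gamma N\|M\|^2$, i.e.\ $\|v\|\le\sqrt{\gamma N}\,\|M\|$ --- \emph{not} $\sqrt\gamma\,\|M\|$. (Your proposed justification, that a mask of $M$ with at most $\gamma$ nonzeros per row has operator norm at most $\sqrt\gamma\|M\|$, is false in general; what is true is the bound on its action on $\mathbf 1$ via the Frobenius norm, as above.) With the consistent factors, $\sqrt N\|M\|\ge\|M\mathbf 1\|\ge\|u\|-\|v\|\ge\sqrt N(N-\gamma)-\sqrt{\gamma N}\|M\|$, and dividing by $\sqrt N$ gives $\|M\|(1+\sqrt\gamma)\ge N-\gamma$, as desired. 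By contrast, the chain you actually wrote, $\|M\|\sqrt N\ge(N-\gamma)-\sqrt\gamma\|M\|$, rearranges only to $\|M\|\ge(N-\gamma)/(\sqrt N+\sqrt\gamma)$, which is essentially the trivial $\Omega(\sqrt N)$ bound.

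For comparison, the paper's proof avoids the triangle-inequality bookkeeping entirely by a case split on the single quantity $t=(N-\gamma)/(\sqrt\gamma+1)$: either every row sum of $M$ is at least $t$, in which case the all-ones test vector immediately gives $\|M\|\ge t$; or some row has sum below $t$, in which case the (at most $\gamma$) negative entries of that row must absorb at least $N-\gamma-t$ in absolute value, so by convexity that row has squared $\ell_2$ norm at least $(N-\gamma-t)^2/\gamma=t^2$, and testing $M^T$ against the corresponding standard basis vector gives $\|M\|\ge t$. Your repaired argument is a perfectly good alternative (arguably more uniform, since it uses one global test vector plus a Frobenius bound), but you should either adopt the paper's case analysis or fix the $\sqrt N$ bookkeeping and prove the bound $\|v\|\le\sqrt{\gamma N}\|M\|$ as above.
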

This lemma provides limitations on the bound from
Theorem~\ref{thm:normV}.
Indeed, 
$\gamma(S)\leq\frac{N}{2}$
and $\frac{N-\gamma}{\sqrt{\gamma}+1}$ is a monotone
decreasing function of $\gamma$,
which implies 
$\sn{S} 
\geq \Omega(\sqrt{N})$.
Interestingly, Lemma~\ref{lem:min*norm} 
and Theorem~\ref{thm:normV} provide 
a quantitively weaker but a more general statement
than the Alon-Boppana theorem:
If $B$ is a $\Delta$ regular $N \times N$ 
boolean matrix with $\Delta \leq N/2$,
then
$$\frac{N \cdot \sigma_2(B) }{\Delta} 
\geq \frac{N-\Delta}{\sqrt{\Delta}+1}
\ \ \Rightarrow \ \
\sigma_2(B) \geq
\left( 1 - \frac{\Delta}{N} \right) \left(\sqrt{\Delta}-1\right).$$
This bound is off by roughly a factor 
of two when the diameter of the graph is large.
When the diameter is small, like in the case of the projective plane
which we discuss in more detail below, this bound is actually
almost tight:
The second largest singular value of the boolean 
point-line incidence matrix
of a projective plane of order $n$ is 
$\sqrt{n}$ while this matrix is $n+1$ regular
(c.f., e.g.,~\cite{Al2}).

It is perhaps worth noting that in fact here there is a simple
argument that gives a slightly stronger result
for boolean regular matrices. The sum of
squares of the singular values of $B$ is the trace of 
$B^tB$, which is $N \Delta$. As the spectral norm is $\Delta$, the
sum of squares of the other singular values is $N \Delta -\Delta^2=
\Delta(N-\Delta)$, implying that 
$$
\sigma_2(B) \geq \sqrt {\frac{\Delta(N-\Delta)}{N-1}} ,$$
which is (slightly) larger than the bound above.

\begin{proof}[Proof of Lemma~\ref{lem:min*norm}]
Let $M$ be a matrix so that $\|M\|= \sn{S}$ and
$M_{i,j} S_{i,j} \geq 1$ for all $i,j$.
Assume without loss of generality\footnote{Multiplying a row by $-1$ does not affect $\sn{S}$.} that $\gamma_i$
is the number of $-1$'s in the $i$'th row of $S$.
If $\gamma =0$, then $S$ has only positive entries
which implies $\|M\| \geq N$ as claimed.
So, we may assume $\gamma \geq 1$.
Let $t$ be the largest real so that 
\begin{align}
\label{eqn:tis}
t^2 = \frac{(N - \gamma - t)^2}{\gamma}.
\end{align}
That is,
if $\gamma = 1$ then
$t = \frac{N-\gamma}{2}$
and if $\gamma > 1$ then
\begin{align*}
t &=
\frac{ -(N-\gamma) + \sqrt{(N-\gamma)^2 + (\gamma-1)(N-\gamma)^2}}{\gamma-1} .
\end{align*}
In both cases,
\begin{align*}
t = 
\frac{ N-\gamma}{\sqrt{\gamma}+1} .
\end{align*}

We shall prove that
$$\|M\| \geq t.$$
There are two cases to consider.
One is that for all $i \in [N]$ we have
$\sum_j M_{i,j} \geq t$.
In this case, if $x$ is the all $1$ vector then
$$\|M\| \geq \frac{\|M x\|}{\|x\|} \geq  t.$$
The second case is that
there is $i \in [N]$ so that $\sum_j M_{i,j} < t$.
Assume without loss of generality that $i=1$.
Denote by $C$ the subset of the columns $j$ so that $M_{1,j} < 0$.
Thus,
\begin{align*}
\sum_{j \in C} |M_{1,j}| &> \sum_{j \not \in C} M_{1,j} - t\\
			     &\geq |[N] \setminus C| - 
                               t \tag{$|M_{i,j}|\geq 1$ 
                               for all $i,j$}\\
		                &\geq N-\gamma-t. \tag{$|C|\leq
                                \gamma$}
\end{align*}
Convexity of $x \mapsto x^2$ implies that
$$\left(\sum_{j \in C} |M_{1,j}| \right)^2
\leq |C| \sum_{j \in C} {M}_{1,j}^2,$$
so by \eqref{eqn:tis}
$$\sum_j {M}_{1,j}^2 \geq \frac{(N - \gamma - t)^2}{\gamma} = t^2.$$
In this case, if $x$ is the vector with $1$ in the first entry
and $0$ in all other entries then
$$\|(M)^T x\| = \sqrt{\sum_j {M}_{1,j}^2}  \geq t = t\|x\|.$$
Since $\|(M)^T\|=\|M\|$, it follows that
$\|M\| \geq t$.
\end{proof}

\subsubsection{Forster's theorem}
\label{sec:ForsterProof}

Here we provide a proof of Forster's theorem,
that is based on the following key lemma,
which he proved.
\begin{lemma}[\cite{DBLP:conf/coco/Forster01}]
\label{lem:Forster}
Let $X \subset \R^k$ be a finite set in general position,
i.e., every $k$ vectors in it are linearly independent.
Then, there exists an invertible matrix $B$ so that
$$\sum_{x \in X} \frac{1}{\|B x\|^2}
Bx \otimes Bx = \frac{|X|}{k} I,$$
where $I$ is the identity matrix,
and $Bx \otimes Bx$ is the rank one matrix
with $(i,j)$ entry $(Bx)_i (Bx)_j$.
\end{lemma}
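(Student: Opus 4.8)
The plan is to produce $B$ as the minimizer of an optimization problem, which is essentially how Forster obtains it. Since $X$ is in general position it contains no zero vector and spans $\R^k$, so
$$\Psi(A)\;=\;\sum_{x\in X}\ln\|Ax\|^2$$
is a well-defined smooth function on the invertible matrices, and I would minimize it over $\Sigma=\{A:\det A=1\}$ (we may restrict to $\det A=1$ since $\|RAx\|=\|Ax\|$ for orthogonal $R$). Note $|X|\ge k$ is forced for the statement to be possible at all, as the left side of the claimed identity has rank at most $|X|$ while $\frac{|X|}{k}I$ has rank $k$; the borderline case $|X|=k$ is elementary, with $B=(YY^T)^{-1/2}$ where $Y$ is the matrix whose columns are the points. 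As a preliminary I would record that $\Psi$ is bounded below on $\Sigma$: for any $k$ points of $X$, collected as columns of a matrix $Y$, Hadamard's inequality for the positive semidefinite matrix $Y^TA^TAY$ gives $\sum_{x}\ln\|Ax\|^2\ge\ln\det(Y^TA^TAY)=2\ln|\det Y|$ over those points, and averaging over all $k$-subsets bounds $\Psi$ below by $\tfrac{|X|}{k}\min_Y 2\ln|\det Y|>-\infty$ (finite by general position).

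The hard part — and the step I expect to be the main obstacle — is showing this infimum is attained, i.e.\ that $\Psi$ is coercive on $\Sigma$ (its sublevel sets are compact). These sets are closed, so the point is boundedness, and here general position is essential. Suppose $A_n\in\Sigma$ escapes every compact set, so the top singular value $\sigma_1(A_n)\to\infty$; since $\det A_n=1$, the least singular value satisfies $\sigma_k(A_n)\ge\sigma_1(A_n)^{-(k-1)}$. Pass to a subsequence so that the top right singular vector converges, $v^{(n)}\to v^{\ast}$. Every $x\in X$ with $\langle x,v^{\ast}\rangle\ne 0$ then has $\|A_nx\|^2\ge\sigma_1(A_n)^2\langle x,v^{(n)}\rangle^2=\Omega(\sigma_1(A_n)^2)$; by general position at most $k-1$ points of $X$ lie on the hyperplane $(v^{\ast})^{\perp}$, and each of those still obeys the crude bound $\|A_nx\|^2\ge\sigma_k(A_n)^2\min_x\|x\|^2=\Omega(\sigma_1(A_n)^{-2(k-1)})$. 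Summing the logarithms, $\Psi(A_n)$ is bounded below by a positive multiple of $\ln\sigma_1(A_n)$, hence $\to\infty$. This crude accounting needs $|X|$ somewhat larger than $k$ — exactly the regime in which Theorem~\ref{thm:Forster} is informative, since its bound forces $k\le O(\sqrt{|X|})$ — while a more careful version, tracking all the blowing-up and all the vanishing singular directions and invoking general position for each coordinate subspace, gives coercivity for every $|X|\ge k$. Thus $\Psi$ attains its minimum at some $B\in\Sigma$, which is in particular invertible, with $\|Bx\|>0$ for all $x\in X$.

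Finally I would read the identity off the first-order optimality conditions at $B$. Using $\nabla_A\ln\|Ax\|^2=\frac{2}{\|Ax\|^2}(Ax)x^T$ and $\nabla_A\ln\det A=A^{-T}$, and that the constraint $\ln\det A=0$ is a submersion (its gradient $A^{-T}$ never vanishes), the Lagrange condition at the minimizer reads
$$2\sum_{x\in X}\frac{1}{\|Bx\|^2}(Bx)x^T\;=\;\nu\,B^{-T}$$
for some scalar $\nu$. Multiplying on the right by $B^T$ turns the left side into $2\sum_{x}\frac{1}{\|Bx\|^2}Bx\otimes Bx$ and the right side into $\nu I$; taking traces gives $2|X|=\nu k$, so $\nu=2|X|/k$, and hence
$$\sum_{x\in X}\frac{1}{\|Bx\|^2}Bx\otimes Bx\;=\;\frac{|X|}{k}\,I,$$
as required. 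Equivalently, since $A\mapsto\Psi(A)-\frac{2|X|}{k}\ln|\det A|$ is scale-invariant and agrees with $\Psi$ on $\Sigma$, its gradient vanishes at $B$, which yields the same equation without Lagrange multipliers.
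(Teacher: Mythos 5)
The paper itself does not prove Lemma~\ref{lem:Forster}: it quotes it from Forster and uses it as a black box, offering only a one-sentence sketch of an iterative ``make $B_{i-1}X$ more equidistributed'' argument. Your variational route --- minimize $\Psi(A)=\sum_{x\in X}\ln\|Ax\|^2$ over $\{\det A=1\}$ and read the identity off the first-order conditions --- is the standard alternative proof of this ``radial isotropic position'' statement, and the parts you actually carry out are correct: the Hadamard-plus-averaging lower bound $\Psi(A)\ge\frac{|X|}{k}\min_Y 2\ln|\det Y|$, the case $|X|=k$, the Lagrange computation with $\nabla_A\ln\|Ax\|^2=\frac{2}{\|Ax\|^2}(Ax)x^T$ and $\nabla_A\ln\det A=A^{-T}$, and the trace identification $\nu=2|X|/k$ all check out. (The parenthetical justification for restricting to $\det A=1$ via left-orthogonal invariance is beside the point --- no reduction is needed, you simply minimize over that slice and verify the minimizer works --- but it is harmless.)

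The genuine gap is the coercivity step, which you flag as the main obstacle and then do not close. Your accounting keeps only the top singular direction and bounds each of the at most $h\le k-1$ points on the hyperplane $(v^{\ast})^{\perp}$ by $\sigma_k^2\ge\sigma_1^{-2(k-1)}$, so the coefficient of $\ln\sigma_1(A_n)$ in your lower bound is $2\bigl(|X|-kh\bigr)$, which is positive only when $|X|>k(k-1)$. The lemma, however, is asserted for every finite $X$ in general position (so every $|X|\ge k$), and the missing range $k<|X|\le k(k-1)$ is precisely where the real work in Forster's argument lies: one has to track the full flag of singular subspaces, use that a $j$-dimensional subspace contains at most $j$ points of $X$ (hence strictly fewer than $\frac{|X|}{k}\,j$ once $|X|>k$), and run a weighted count showing the expanding directions outweigh the collapsing ones. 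Deferring this to ``a more careful version'' is deferring the heart of the proof. Nor does the appeal to the regime where Theorem~\ref{thm:Forster} is informative rescue the lemma as stated; even for that application the constants do not align (non-triviality of the theorem only forces roughly $k\le\sqrt{2N}$, while your argument needs $N>k(k-1)$). So: right framework and a correct endgame, but the key compactness/coercivity claim is unproved in the stated generality, and that is the step that makes the lemma nontrivial.
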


The lemma shows that every $X$ in general position
can be linearly mapped to $BX$ that is, in some sense, equidistributed.
In a nutshell, the proof of the lemma is by finding
$B_1,B_2,\ldots$ so that each $B_i$ makes $B_{i-1}X$ closer
to being equidistributed,
and finally using that the underlying object is compact,
so that this process reaches its goal.

\begin{proof}[Proof of Theorem~\ref{thm:Forster}]
Let $M$ be a matrix so that $\|M\| = \sn{S}$
and $M_{i,j} S_{i,j} \geq 1$ for all $i,j$.
Clearly, $\text{sign-rank}(S) = \text{sign-rank}(M)$.
Let $X ,Y$ be two subsets of size $N$ of unit vectors in
$\R^k$ with $k = \text{sign-rank}(M)$
so that $\langle x ,y\rangle M_{x,y} > 0$ for all $x,y$.
Lemma~\ref{lem:Forster} says that we can assume
\begin{align}
\label{eqn:xox}
\sum_{x \in X} x \otimes x = \frac{N}{k} I;
\end{align}
If necessary replace $X$ by $BX$ and $Y$ by $(B^T)^{-1}Y$,
and then normalize
(the assumption required in the lemma that
$X$ is in general position may be obtained by a slight perturbation of its vectors).

The proof continues by bounding 
$D =  \sum_{x \in X,y\in Y} M_{x,y} \langle x, y \rangle$
in two different ways.

\noindent
First, bound $D$ from above:
Observe that for every two vectors $u,v$,
Cauchy-Schwartz inequality implies
\begin{align}
\label{eqn:normUV}
\langle M u , v \rangle 
\leq \|M u\| \|v\|
\leq \|M\| \|u\| \|v\|.
\end{align}
Thus,
\begin{align*}
D 
& = 
\sum_{i=1}^k \sum_{x \in X} \sum_{y \in Y}  M_{x,y} x_i y_i\\  
& \leq 
 \sum_{i=1}^k \|M\| \sqrt{\sum_{x \in X} x_i^2}
\sqrt{\sum_{y \in Y} y_i^2} \tag{\eqref{eqn:normUV}} \\
& \leq 
 \|M\| \sqrt{\sum_{i=1}^k \sum_{x \in X} x_i^2}
\sqrt{\sum_{i=1}^k \sum_{y \in Y} y_i^2} \tag{Cauchy-Schwartz}  = \|M\| N.
\end{align*}

\noindent
Second, bound $D$ from below:
Since $|M_{x,y}|\geq 1$ and $|\langle x, y \rangle| \leq 1$
for all $x,y$, using \eqref{eqn:xox},
\begin{align*}
D
& = \sum_{x \in X} \sum_{y \in Y} M_{x,y} \langle x , y \rangle  \geq \sum_{x \in X} \sum_{y \in Y} ( \langle x , y \rangle )^2 
= \sum_{y \in Y} \sum_{x \in X} \langle y, (x \otimes x) y\rangle  = \frac{N}{k} \sum_{y \in Y}  \langle y, y\rangle
  = \frac{N^2}{k}. \end{align*}
\end{proof}

\subsection{Applications}
\subsubsection{Explicit examples}
\label{sec:PS}
Here we prove Theorem~\ref{thm:projective} and Theorem~\ref{thm:projint}.
\begin{proof}[Proof of Theorem~\ref{thm:projective}]
It is well known that the VC dimension of $A$ is $d$,
but we provide a brief explanation.
The VC dimension is at least $d$ 
by considering any set of $d$ independent points
(i.e.\ so that no strict subset of it spans it).
The VC dimension is at most $d$ since every
set of $d+1$ points is dependent in a $d$ dimensional space.

The lower bound on the sign rank 
follows immediately from Theorem~\ref{thm:SpecVsSignRank},
and the following known bound on the spectral 
gap of these matrices.
\begin{lemma}
\label{lem:normVold}
If $B$ is the boolean version of $A$ then
$$\frac{\sigma_2(B)}{\Delta} = 
\frac{n^{\frac{d-1}{2}}(n-1)}{n^d -1} 
\leq N_{n,d}^{-\frac{1}{2}+\frac{1}{2d}}.$$
\end{lemma}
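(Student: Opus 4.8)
The statement packages together an exact identity for $\sigma_2(B)/\Delta$ and the upper bound $N_{n,d}^{-1/2+1/(2d)}$ on it. The plan is to first pin down the exact value from the incidence combinatorics of the $d$-dimensional projective space of order $n$, and then bound the resulting rational expression by elementary estimates on the numbers $N_{n,k}$. Throughout, write $\Delta=N_{n,d-1}$ for the number of hyperplanes through a point (equivalently, by projective duality, the number of points on a hyperplane), and $\mu=N_{n,d-2}$ for the number of hyperplanes through two distinct points: two distinct points span a projective line $\ell$, and the hyperplanes containing $\ell$ are in bijection with the hyperplanes of the quotient space, which is a projective space of dimension $d-2$ and order $n$, so there are $N_{n,d-2}$ of them. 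Hence the $(p,p')$ entry of $BB^T$ equals $\Delta$ when $p=p'$ and $\mu$ otherwise, i.e.
$$BB^T=(\Delta-\mu)I+\mu J,$$
where $J$ is the all $1$ matrix.

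Next I would read off the singular values of $B$ from this. Since $B$ is $\Delta$-regular (both its row sums and its column sums equal $\Delta$), the all-ones vector is an eigenvector of $BB^T$ with eigenvalue $\Delta^2$; on the other side, this matches $\Delta-\mu+\mu N_{n,d}=\Delta^2$, which follows from the identities $N_{n,d}=n\Delta+1$ and $\Delta-1=n\mu$ (both immediate from $N_{n,k}=1+n+\dots+n^k$). Every vector orthogonal to the all-ones vector is an eigenvector of $BB^T$ with eigenvalue $\Delta-\mu=N_{n,d-1}-N_{n,d-2}=n^{d-1}$, and since $n^{d-1}<\Delta^2$ this is the second largest eigenvalue of $BB^T$. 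Therefore $\sigma_2(B)=\sqrt{\Delta-\mu}=n^{(d-1)/2}$, and dividing by $\Delta=\frac{n^d-1}{n-1}$ gives the claimed identity $\sigma_2(B)/\Delta=\frac{n^{(d-1)/2}(n-1)}{n^d-1}$.

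For the inequality it suffices to show $n^{(d-1)/2}/\Delta\le N_{n,d}^{-(d-1)/(2d)}$. Since $n^{d-1}<N_{n,d-1}=\Delta$ we have $\sigma_2(B)=n^{(d-1)/2}<\sqrt{\Delta}$, hence $\sigma_2(B)/\Delta<1/\sqrt{\Delta}$, so it is enough to prove $\Delta\ge N_{n,d}^{1-1/d}$, i.e.\ $\Delta^d\ge N_{n,d}^{d-1}$. I would deduce this from the concavity of the sequence $k\mapsto\log N_{n,k}$: the inequality $N_{n,k}^2\ge N_{n,k-1}N_{n,k+1}$ becomes, after clearing the common denominator $(n-1)^2$, the inequality $(n^{k+1}-1)^2\ge(n^k-1)(n^{k+2}-1)$, which simplifies to $2n^{k+1}\le n^k+n^{k+2}$, i.e.\ to $0\le(n-1)^2$. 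A concave sequence lies above each of its chords, so taking the chord from $k=0$ to $k=d$ and evaluating at $k=d-1=\frac{1}{d}\cdot 0+\frac{d-1}{d}\cdot d$ gives $\log N_{n,d-1}\ge\frac{1}{d}\log N_{n,0}+\frac{d-1}{d}\log N_{n,d}=\frac{d-1}{d}\log N_{n,d}$, using $N_{n,0}=1$; exponentiating yields $\Delta^d\ge N_{n,d}^{d-1}$, as needed.

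The only places where care is required are the bookkeeping of the incidence numbers of the projective space (including the degenerate case $d=2$, where $\mu=N_{n,0}=1$ just says that two points determine a line) and the identification of the second largest eigenvalue of $(\Delta-\mu)I+\mu J$; everything else is elementary algebra. As a sanity check I would run the whole chain for $d=2$: there $\sigma_2(B)/\Delta=\sqrt n/(n+1)<1/\sqrt{n+1}\le(n^2+n+1)^{-1/4}$, the last step being $(n+1)^2\ge n^2+n+1$.
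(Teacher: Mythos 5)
Your proof is correct and follows essentially the same route as the paper: compute $BB^T=(N_{n,d-1}-N_{n,d-2})I+N_{n,d-2}J=n^{d-1}I+N_{n,d-2}J$ from the standard incidence counts and read off that every singular value other than the top one equals $n^{(d-1)/2}$. The only difference is that you also spell out the final numerical inequality (via $\sigma_2/\Delta<\Delta^{-1/2}$ and the log-concavity bound $N_{n,d-1}^{d}\geq N_{n,d}^{d-1}$), a step the paper leaves as an implicit elementary computation; your verification of it is sound.
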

The proof is so short that we include it here.
\begin{proof}
We use the following two known properties 
(see, e.g.,\ \cite{zbMATH01382771}) of projective spaces.
Both the number of distinct hyperplanes through a point
and the number of distinct points on a hyperplane are $N_{n,d-1}$.
The number of hyperplanes through two distinct points is
$N_{n,d-2}$.

The first property implies that
$A$ is $\Delta = N_{n,d-1}$ regular.
These properties also imply
$$B B^T = \left(N_{n,d-1}-N_{n,d-2} \right) I + N_{n,d-2} J = 
 n^{d-1} I + N_{n,d-2} J,$$
 where $J$ is the all $1$ matrix.
Therefore, all singular values except the maximum one
are $n^{\frac{d-1}{2}}$.
\end{proof}
\end{proof}

{
\begin{proof}[Proof of Theorem~\ref{thm:projint}]
We first show that $R$ is indeed a maximum class of VC dimension $2$.
The VC dimension of $R$ is $2$: It is at least $2$ because $R$ contains the set of lines
whose VC dimension is $2$. It is at most $2$ because no three points $p_1,p_2,p_3$ are shattered. Indeed if they all belong to a line $\ell$ then without loss of generality according to the  order of $\ell$ we have $p_1<p_2<p_3$ which implies that the pattern $101$ is missing. Otherwise, they are not co-linear and the pattern $111$ is missing.

To see that $R$ is a maximum class, note
that there are exactly $N+1$ intervals of size at most one (one empty interval
and $N$ singletons).
For each line $\ell\in L$, the number of intervals of size at least two which are subsets of $\ell$
is exactly ${|\ell| \choose 2} = {n+1 \choose 2}$. Since every two distinct lines intersect in exactly one point, it follows that each interval of size at least two is a subset of exactly one line.
It follows that the number of intervals is
\[
1 + N + N\cdot{n+1 \choose 2} = 1 + N + {N \choose 2}.
\]
Thus, $R$ is indeed a maximum class of VC dimension $2$.

Next we show that there exists a choice of a linear order for each line such that the resulting $R$ has sign rank $\Omega(N^{\frac{1}{2}}/\log N)$.
By the proof of Theorem~\ref{t42}, case $d=2$, there is a choice of a subset for each line such that the resulting $N$ subsets form a class of sign rank $\Omega(N^{\frac{1}{2}}/\log N)$.
We can therefore pick the linear orders in such a way that each of these $N$ subsets forms an interval, and the resulting maximum class 
(of all possible intervals with respect to these orders)
has sign rank at least as large as $\Omega(N^{\frac{1}{2}}/\log N)$.
\end{proof}
}

\subsubsection{Computing the sign rank}
\label{sec:approxproof}
In this section
we describe an efficient algorithm that approximates the sign rank
(Theorem~\ref{t43}).

The algorithm uses the following notion.
Let $V$ be a set. 
A pair $v,u \in V$ is {\em crossed} by a vector $c \in\{\pm 1\}^V$ if $c(v) \neq c(u)$. 
Let $T$ be a tree with vertex set $V = [N]$ and edge set $E$.
Let $S$ be a $V \times [N]$ sign matrix.
The {\em stabbing number} of $T$ in $S$
is the largest number of edges in $T$ that are crossed by the same column of $S$.
For example, if $T$ is a path then $T$ defines a linear order (permutation) on $V$
and the stabbing number is the largest number
of sign changes among all columns with respect to this order.

Welzl~\cite{We} gave an efficient algorithm for computing
a path $T$ with a low stabbing number for matrices $S$ 
with VC dimension $d$. 
The analysis of the algorithm can be improved by a logarithmic
factor using
a result of~\cite{Ha}.
\begin{theorem}[\cite{We,Ha}]\label{t44}
There exists a polynomial time algorithm such 
that given a $V\times [N]$ sign matrix
$S$ with $|V|=N$, outputs a path on $V$ 
with stabbing number at most $200N^{1-1/d}$
where $d = VC(S)$.
\end{theorem}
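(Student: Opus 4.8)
\textbf{Proof proposal for Theorem~\ref{t44}.}

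The plan is to realise algorithmically the (by now standard) reweighting construction of a spanning path with low stabbing number, due to Welzl, whose analysis is tightened by Haussler's packing lemma to avoid a logarithmic loss. Recall that a path on $V$ is the same as a linear order on $V$, and its stabbing number in $S$ is exactly the maximum over columns of the number of sign changes along that order; thus Theorem~\ref{t44} is the effective version of Lemma~\ref{l44}. Throughout we work with the (at most $N$) distinct columns of $S$, viewed as ranges over the ground set $V$; by Sauer's lemma the restriction of this range system to any $m$ points has at most $\sum_{i\le d}\binom{m}{i}\le m^{d}$ distinct ranges, and in particular VC dimension at most $d$, so Haussler's packing bound with exponent $d$ is available on every subset.

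I would build the path bottom-up over $\lceil\log_2 N\rceil$ rounds. Enter round $j$ with a family of vertex-disjoint subpaths partitioning $V$, say $n_j\approx N/2^{j-1}$ of them; the round concatenates them in pairs along a near-perfect matching on their endpoints, leaving $\approx n_j/2$ longer subpaths for round $j+1$, so that after all rounds a single Hamiltonian path remains. The matching inside round $j$ is found greedily by multiplicative weights: initialise weight $1$ on each distinct column; repeatedly, among all pairs of endpoints of distinct still-unmatched subpaths, pick a pair $\{u,v\}$ minimising the total weight of columns crossing $\{u,v\}$, add the edge $uv$, and double the weight of every column that crosses it. Each step scans $O(n_j^{2})$ pairs against $O(n_j)$ weighted columns, so the algorithm runs in time polynomial in $N$ (and independently of $d$, since there are only $N$ columns).

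The analysis rests on a \emph{selection lemma}: at any moment inside a round, with $m$ active endpoints and total column weight $W$, some pair of active endpoints is crossed by columns of total weight at most $c\,W/m^{1/d}$ for an absolute constant $c$. Granting it, a potential argument closes everything. Within round $j$ the weight only multiplies by the factors $1+c/m^{1/d}$ over the decreasing values $m=n_j,n_j-1,\dots$, so the final total weight is at most $n_j\cdot\exp\!\big(c\sum_{m\le n_j}m^{-1/d}\big)=n_j\cdot 2^{O(n_j^{1-1/d})}$, the exponent being the geometric-type sum $\sum_{m\le n_j}m^{-1/d}=O(n_j^{1-1/d})$ valid for $d\ge 2$; on the other hand a column crossed by $k$ matching edges of round $j$ has been doubled $k$ times, so $2^{k}$ is at most that final weight, giving $k\le\log_2 n_j+O(n_j^{1-1/d})=O(n_j^{1-1/d})$. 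Summing over $j$ and using that $\sum_j (N/2^{j})^{1-1/d}$ is a convergent geometric series, the stabbing number of the final path is $O(N^{1-1/d})$; tracking the absolute constants through Sauer's lemma, Haussler's packing lemma and the two geometric sums yields the stated $200\,N^{1-1/d}$ (and the bound is anyway vacuous once $d$ exceeds roughly $\log_{200}N$, so only moderate $d$ require the careful accounting).

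The main obstacle is the selection lemma, and this is precisely where Haussler's packing lemma enters. The idea is that a random sample of about $m^{1/d}$ columns, drawn with probability proportional to their weights, partitions the $m$ active points into few cells, so a typical point shares its cell with another point; such a pair is crossed by none of the sampled columns, and a column missed by the sample is unlikely to cross the pair, which bounds the expected crossing weight of the pair. Carrying this out with the crude Sauer bound $O(s^{d})$ on the number of cells of an $s$-element sample produces the estimate $O(W\log m/m^{1/d})$ — the source of the logarithm in the original analysis. Haussler's packing lemma supplies the sharp bound on the number of cells (equivalently, a $m^{-1/d}$-approximation of the dual range system of near-optimal size), which removes the logarithm and gives the clean $O(W/m^{1/d})$. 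Once the selection lemma holds with an absolute constant, the remaining work is the bookkeeping described above.
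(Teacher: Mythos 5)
Your construction is sound and rests on the same two pillars as the paper's proof --- multiplicative weight doubling on the columns and Haussler's packing theorem (Theorem~\ref{t200}) to certify, at every step, a pair of rows whose crossing weight is $O(W/m^{1/d})$ --- but the way you assemble the path is genuinely different. The paper runs a single greedy phase that grows a spanning \emph{tree}: at step $i$ it maintains one evolving distribution $p_i$ on columns, picks the cheapest edge joining two components of the current forest (Haussler applied to one representative row per component gives $x_i\le 4e^2(N-i)^{-1/d}$), and only at the end converts the tree into a path via an Eulerian tour of the doubled tree, paying a factor $2$ (hence $200$ rather than $100$). You instead build the path directly through $\lceil\log_2 N\rceil$ rounds of greedy matchings on subpath endpoints, resetting the weights each round and summing a geometric series over rounds. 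Your route avoids the Euler-tour doubling but pays an additive $O(\log^2 N)$ (one $\log$ per round, from the initial total weight, which incidentally is at most the number of distinct columns $\le N$, not $n_j$ as you wrote) and requires the extra bookkeeping across rounds; the paper's route is a single potential argument with one additive $\log N$. Both are legitimate effective versions of Lemma~\ref{l44}.

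Three points need repair. First, your selection lemma must produce a pair of endpoints of \emph{distinct} unmatched subpaths; as stated, Haussler could hand you the two endpoints of the same subpath. The fix is the same device the paper uses for forest components: apply Theorem~\ref{t200} to one representative endpoint per unmatched subpath, which still gives crossing weight at most $4e^2(m/2)^{-1/d}=O(W\,m^{-1/d})$. Second, your sketched proof of the selection lemma via random sampling and counting cells is both unnecessary and, as described, not rigorous --- Haussler's packing lemma is a bound on the size of an $\eps$-separated set of rows, not on the number of cells of a sample; the direct contrapositive (``if every pair of the $m$ representatives had crossing weight $>\eps$ then $m\le(4e^2/\eps)^d$'') is all you need and is exactly how the paper uses it. Third, your analysis only works for $d\ge 2$ (you note the sum $\sum_m m^{-1/d}$ diverges logarithmically at $d=1$), so the case $d=1$, where the claimed bound is the constant $200$, is not covered; the paper handles it separately via Claim~\ref{clm:ID}, which yields a path with stabbing number $2$, and you would need to do the same. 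Finally, the constant $200$ is asserted rather than tracked; given the reset-per-round losses this deserves at least a line of accounting, though nothing structural hinges on it.
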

For completeness, and since  to the best of our knowledge
no explicit proof of this theorem appears in print,
we provide a description and analysis of the algorithm.
We assume without loss of generality that
the rows of $S$ are pairwise distinct.

We start by handling the case\footnote{This analysis
also provides an alternative proof for Lemma~\ref{lem:VCone}.} $d=1$.
In this case, we directly output a tree that is a path (i.e., a linear order on $V$).
If $d=1$, then Claim~\ref{clm:ID} implies
that there is a column with at most $2$ sign changes with respect to any order
on $V$.
The algorithm first finds by recursion a path $T$ for the matrix obtained from $S$ by
removing this column, and outputs the same path $T$ for the matrix $S$ as well.
By induction, the resulting path has stabbing number at most $2$
(when there is a single column the stabbing number can be made $1$).

For $d>1$, 
the algorithm constructs a sequence of $N$ forests
$F_0,F_1,\ldots,F_{N-1}$ over the same vertex set $V$.
The forest $F_i$ has exactly $i$ edges,
and is defined by greedily adding an edge $e_i$ to $F_{i-1}$.
As we prove below, the tree $F_{N-1}$ has a stabbing number
at most $100N^{1-1/d}$.
The tree $F_{N-1}$ is transformed to a path $T$ as follows.
Let $v_1,v_2,\ldots,v_{2N-1}$ be an eulerian path in the graph obtained
by doubling every edge in $F_{N-1}$.
This path traverses each edge of $F_{N-1}$ exactly twice.
Let $S'$ be the matrix with $2N-1$ rows and $N$ columns
obtained from $S$ be putting row $v_i$ in $S$ as row $i$, for $i \in [2N-1]$.
The number of sign changes in
each column in $S'$ is at most $2 \cdot 100N^{1-1/d}$.
Finally, let $T$ be the path obtained from the eulerian path
by leaving a single copy of each row of $S$.
Since deleting rows from $S'$ cannot increase the number of sign changes,
the path $T$ is as stated.

The edge $e_i$ is chosen as follows.
The algorithm maintains a probability distribution $p_i$ on $[N]$.
The weight $w_i(e)$ of the pair $e = \{v,u\}$ is the probability mass of the columns 
$e$ crosses, that is, $w_i(e) = p_i(\{ j \in [N] : S_{u,j} \neq S_{v,j} \})$.
The algorithm chooses $e_i$ as an edge with minimum $w_i$-weight among all edges
that are not in $F_{i-1}$ and do not close a cycle in $F_{i-1}$.

The distributions $p_1,\ldots,p_{N}$ are chosen iteratively as follows.
The first distribution $p_1$ is the uniform distribution on $[N]$.
The distribution $p_{i+1}$ is 
obtained from $p_{i}$ by doubling the relative mass of each column
that is crossed by $e_{i}$.
That is, let $x_{i} = w_{i}(e_{i})$,
and for every column $j$ that is crossed by $e_{i}$ define
$p_{i+1}(j)=\frac{2p_{i}(j)}{1+x_{i}}$, and for every 
other column $j$ define $p_{i+1}(j)=\frac{p_{i}(j)}{1+x_{i}}$.

This algorithm clearly produces a tree on $V$,
and the running time is indeed polynomial in $N$.
It remains to prove correctness.
We claim that each column is crossed by at most $O(N^{1-1/d})$ edges in $T$.
To see this, let $j$ be a column in $S$, and let $k$
be the number of edges crossing $j$.
It follows that
$$p_{N}(j) = \frac{1}{N} \cdot 2^k \cdot \frac{1}{(1+x_1)(1+x_2)\ldots(1+x_{N-1})}.$$
To upper bound $k$, we use the following claim.
\begin{claim}
\label{clm:Has}
For every $i$ we have $x_i\leq 4e^2(N-i)^{-1/d}$.
\end{claim}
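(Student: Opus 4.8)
The goal is to bound $x_i = w_i(e_i)$, the minimum weight edge available at step $i$. The plan is to exhibit a subset of candidate edges whose average $w_i$-weight is small, which then bounds the minimum.

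\medskip

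\noindent\textbf{Plan.} First I would fix the step $i$ and set $n' = N - i$, noting that the forest $F_{i-1}$ has $i$ edges and hence at least $n'$ connected components on the vertex set $V$. Pick one representative vertex from each component; this gives a set $U \subseteq V$ with $|U| \geq n'$ such that \emph{every} pair from $U$ is an admissible edge (it cannot close a cycle in $F_{i-1}$, since the endpoints lie in distinct components). So $x_i$ is at most the average of $w_i(\{u,v\})$ over all pairs $u,v \in U$. Now I would bring in the probabilistic/combinatorial input: the matrix $S$ has VC dimension $d$, so by Sauer's lemma its restriction to $U$ has at most $c|U|^d$ distinct rows; equivalently, the set system on the columns (restricted to $U$) has primal shatter function $O(t^d)$. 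The key geometric fact — this is exactly the content of the ``spanning tree with low stabbing number'' / $\varepsilon$-net type argument of Welzl, sharpened via Haussler's packing lemma \cite{Ha} — is that for a set system of VC dimension $d$ on a ground set of size $n'$, if we sample a random pair of points according to the current distribution $p_i$ on the columns... wait, let me restate: we want to average $w_i(\{u,v\})$ over pairs $u,v\in U$, where $w_i(\{u,v\}) = p_i(\{j : S_{u,j}\neq S_{v,j}\})$. Swapping the order of summation, this average equals $\sum_j p_i(j) \cdot \Pr_{u,v}[S_{u,j}\neq S_{v,j}]$, i.e. the $p_i$-expectation of the probability that a random pair from $U$ is separated by column $j$.

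\medskip

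\noindent\textbf{Main step.} The heart of the matter is therefore to show that for \emph{every} distribution on the columns, one can choose a large subset $U$ of the rows (here forced to be the component representatives, so really: for the specific $U$ at hand, or by a further averaging, for a random large $U$) such that the average separation probability is $O(|U|^{-1/d})$. This is where Haussler's packing lemma enters: in a VC dimension $d$ set system, one cannot have too many rows (viewed as subsets of the column set) that are pairwise far apart in the $p_i$-weighted Hamming metric. Concretely, the number of rows with pairwise $p_i$-distance exceeding $\delta$ is at most $O((1/\delta)^d)$. Applying this with $\delta$ of order $n'^{-1/d}$ shows that among any $n'$ rows, most pairs are within $p_i$-distance $O(n'^{-1/d})$ of each other, giving the bound on the average, hence on $x_i = w_i(e_i) \le 4e^2 (N-i)^{-1/d}$. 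The constant $4e^2$ comes from tracking the constants in the packing lemma together with a dyadic summation over distance scales.

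\medskip

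\noindent\textbf{Expected obstacle.} The main technical obstacle is the packing-lemma estimate itself and its interaction with the fact that $U$ is not an arbitrary large set but the (adversarially determined) set of component representatives of $F_{i-1}$. I would handle this by making the argument work for \emph{any} subset of size $\geq N-i$: bound the average of $w_i$ over all pairs inside an arbitrary such $U$, using that the restricted set system still has VC dimension $\le d$. A secondary point to be careful about is that the distributions $p_i$ change over time (mass is doubled on crossed columns), but for the purpose of bounding $x_i$ only the \emph{current} $p_i$ matters, so no telescoping over time is needed here — that telescoping is used separately, in the surrounding proof, to convert the bound $x_i \le 4e^2(N-i)^{-1/d}$ into the stabbing-number bound via the identity for $p_N(j)$ already displayed. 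I would also double-check the edge case where $F_{i-1}$ has more than $N-i$ components only transiently; since each $e_\ell$ reduces the component count by exactly one (it never closes a cycle), $F_{i-1}$ has exactly $N-i$ components, so $|U| = N-i$ exactly, which is what the claimed exponent needs.
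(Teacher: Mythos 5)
Your setup is right and matches the paper's: the component representatives of the current forest form a set $U$ of at least $N-i$ pairwise-distinct rows, every pair of which is an admissible candidate for $e_i$, so it suffices to exhibit one pair in $U$ of small $p_i$-distance. But your ``main step'' replaces this by the claim that the \emph{average} pairwise $p_i$-distance over $U$ is $O(|U|^{-1/d})$, and that claim is false. Counterexample with $d=1$: let the rows be the $\pm1$ indicators of initial segments $\{1,\dots,j\}\subseteq[N]$ (VC dimension $1$), let $p_i$ be uniform on the columns, and let $U$ consist of the $n'$ rows with $j=kN/n'$, $k=1,\dots,n'$. The distance between rows $j,j'$ is $|j-j'|/N$, so the average over pairs in $U$ is $\Theta(1)$, not $O(n'^{-1})$. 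The packing lemma only forbids many points that are \emph{pairwise} far apart; it is perfectly consistent with most pairs being far apart (a few well-separated clusters already ruin the average), and no dyadic summation over scales repairs this --- in the example above the total pairwise distance really is $\Theta(n'^2)$.

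The fix is to drop the averaging entirely and bound the \emph{minimum} instead, which is exactly what Haussler's packing bound (Theorem~\ref{t200}) delivers in contrapositive form: among any $M$ distinct rows of a VC-dimension-$d$ matrix, some pair has $p_i$-distance at most $4e^2M^{-1/d}$ (otherwise all $\binom{M}{2}$ distances would exceed $\eps=4e^2M^{-1/d}$ and the theorem would force $M\le(4e^2/\eps)^d=M$, with the strict version giving a contradiction). Applying this with $M=N-i$ representatives, some admissible pair has weight at most $4e^2(N-i)^{-1/d}$, and since $e_i$ is the minimum-weight admissible edge, $x_i\le 4e^2(N-i)^{-1/d}$. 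This is the paper's one-line argument. Your remaining points are fine: the adversarial choice of $U$ is irrelevant (Haussler applies to any set of distinct rows), only the current $p_i$ matters, and the off-by-one in the component count ($F_{i-1}$ has $N-i+1$ components, not $N-i$) only helps you.
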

The claim completes the proof of Theorem~\ref{t44}:
Since $p_{N}(j)\leq 1$ and $d > 1$, 
\begin{align*}
k &\leq \log N +\log\left({1+x_1}\right)+\ldots+\log\left({1+x_{N-1}}\right) \\
   &\leq \log(N) + 2 \left( \ln(1+x_1)+ ... + \ln(1+x_{N-1})\right) \tag{$\forall x:~\log(x)\leq2\ln(x)$} \\ 
   &\leq \log(N) + 2 (x_1 + ... + x_{N-1})\\
   &\leq \log N + 8e^2N^{1-1/d} 
   \leq 100N^{1-1/d}.
\end{align*}

The claim follows from the following theorem of Haussler.
\begin{theorem}[\cite{Ha}]
\label{t200}
Let $p$ be a probability distribution on $[N]$, and let $\eps>0$.
Let $S \in \{\pm 1\}^{V \times [N]}$ be a sign matrix of VC dimension $d$
so that the $p$-distance between every two distinct rows $u,v$
is large:
$$p(\{j \in [N]: S_{v,j} \neq S_{u,j} \}) \geq \eps.$$
Then,
the number of distinct rows in $S$ is at most
$$e (d+1) \left( 2e / \eps \right)^d
\leq \left(4e^2/\eps\right)^d.$$
\end{theorem}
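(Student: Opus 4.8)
The plan is to prove Theorem~\ref{t200} by a random sampling (dimension reduction) argument: restrict $S$ to a random list of $m$ columns drawn i.i.d.\ from $p$, bound the expected number of distinct rows of the restriction from above by the Sauer--Shelah lemma, bound the same quantity from below using the $\eps$-separation of the rows, and compare the two bounds. Throughout let $M$ denote the number of distinct rows of $S$.

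First I would set up the restriction: sample $t_1,\dots,t_m$ i.i.d.\ from $p$ (with $m$ to be fixed later), let $\pi$ send a row $v$ of $S$ to $(v_{t_1},\dots,v_{t_m})\in\{\pm 1\}^m$, and put $\tilde V=\{\pi(v):v\text{ a row of }S\}$. Since VC dimension does not grow under restriction to a subset of columns, $\tilde V$ is the row set of a sign matrix of VC dimension at most $d$ over $m$ columns, so the Sauer--Shelah lemma gives $|\tilde V|\le\sum_{i=0}^{d}\binom{m}{i}\le(em/d)^d$ for $m\ge d$, hence $\E|\tilde V|\le(em/d)^d$. For the lower bound on $\E|\tilde V|$: two distinct rows $u\ne v$ have $\pi(u)=\pi(v)$ only if no sampled column lies in $\{j:S_{u,j}\ne S_{v,j}\}$, a set of $p$-mass at least $\eps$, so this collision has probability at most $(1-\eps)^m\le e^{-\eps m}$. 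A union bound over the at most $\binom{M}{2}$ pairs shows the expected number of rows not uniquely represented in $\tilde V$ is at most $\binom{M}{2}e^{-\eps m}$, so $\E|\tilde V|\ge M-\binom{M}{2}e^{-\eps m}$. Comparing the two bounds with $m\approx\eps^{-1}\ln M$ already yields $M\le 2(em/d)^d$, i.e.\ $M=O_d\big((\eps^{-1}\log\eps^{-1})^d\big)$.

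The main obstacle is that this last estimate carries a spurious $\log^d(1/\eps)$ factor, so the routine computation above does not suffice: forcing one row's $\pi$-fiber to be a singleton requires beating all $M-1$ other rows, which costs a $\log M$ in the sample size $m$. To reach the clean bound one must replace the pairwise union bound by the one-inclusion graph density estimate of Haussler--Littlestone--Warmuth — in the one-inclusion graph of a class of VC dimension $d$ in $\{\pm 1\}^m$ (vertices are the rows, with an edge between rows at Hamming distance one) the number of edges is at most $d$ times the number of vertices, so a typical projected row has only $O(d)$ Hamming neighbours. Accounting for near-collisions through this global structural fact, rather than pair by pair, permits taking $m$ of order $d/\eps$ only, and optimizing the resulting inequality of the form $M\lesssim c\,(em/d)^d$ with $m=O(d/\eps)$, while tracking constants, gives $M\le e(d+1)(2e/\eps)^d\le(4e^2/\eps)^d$. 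One can also organize the argument as an induction on $d$, peeling off one sampled coordinate at a time and invoking the one-inclusion graph bound at each step; either way, the technical heart is the same, namely using the density estimate in place of a union bound.
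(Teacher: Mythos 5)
This theorem is not proved in the paper at all: it is imported verbatim from Haussler \cite{Ha}, so there is no internal proof to compare against. Your plan is, in outline, the strategy of Haussler's own proof, and you correctly diagnose where the naive argument falls short. But as a proof the proposal has a genuine gap. The only part you actually carry out --- i.i.d.\ sampling of $m$ columns from $p$, Sauer--Shelah above, a pairwise union bound below, and $m\approx\eps^{-1}\ln M$ --- yields $M=O_d\bigl((\eps^{-1}\log\eps^{-1})^d\bigr)$, which you acknowledge is weaker by a $\log^d(1/\eps)$ factor. That lost logarithm is not cosmetic here: the whole reason the paper invokes \cite{Ha} rather than the original analysis of \cite{We} is to remove exactly such a factor (see the discussion before Lemma~\ref{l44} and the use of Theorem~\ref{t200} in Claim~\ref{clm:Has}), so the bound you have actually established would propagate an extra $\log$ into Theorem~\ref{t44} and Theorem~\ref{t41}.

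The step that would close the gap --- ``accounting for near-collisions through the one-inclusion edge-density bound rather than pair by pair, which permits $m=O(d/\eps)$'' --- is named but not performed, and it is precisely the technical heart of Haussler's proof. The density estimate $|E|\leq d|V|$ of the one-inclusion graph does not slot into your collision count directly: collisions are pairs at projected Hamming distance $0$, not $1$, and Haussler's argument bridges this by sampling $m-1$ coordinates plus one additional random coordinate, relating the expected number of rows that remain merged to the expected number of unit-distance pairs in the projection, and only then invoking the edge-density bound; the bookkeeping (a leave-one-out/conditioning argument, with the constants $e(d+1)(2e/\eps)^d$ emerging from it) is where all the work lies. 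Asserting that ``optimizing the resulting inequality while tracking constants gives $M\leq e(d+1)(2e/\eps)^d$'' is a statement of the goal, not a derivation. As written, your proposal proves a strictly weaker bound and otherwise defers to \cite{Ha}; if one is willing to defer at that point, one may as well cite \cite{Ha} outright, as the paper does. To make this a self-contained proof you would need to spell out the extra-coordinate argument (or an equivalent induction on $d$) in full.
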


\begin{proof}[Proof of Claim~\ref{clm:Has}]
Haussler's theorem states that if the number of distinct rows is $M$,
then there must be two distinct rows of $p_i$-distance at most
$4e^2M^{-1/d}$.
There are $N-i$ connected components in $F_i$.
Pick $N-i$ rows, one from each component.
Therefore, there are two of these rows whose distance is at most
$4e^2M^{-1/d}=4e^2(N-i)^{-1/d}$. 
Now, observe that the $w_i$-weight of the pair $\{u,v\}$
equals the $p_i$-distance between $u,v$.
Since $e_i$ is chosen to have minimum weight, $x_i\leq 4e^2(N-i)^{-1/d}$
\end{proof}

We now describe the approximation algorithm.
Let $S$ be an $N\times N$ sign matrix of VC dimension $d$.
Run Welzl's algorithm on $S$,
and get a permutation of the rows of $S$ that yield a low stabbing number.
Let $s$ be the maximum number of sign changes
among all columns of $S$ with respect to this permutation.
Output $s+1$ as the approximation to the sign rank of $S$.

We now analyze the approximation ratio. 
By Lemma~\ref{l43} the sign rank of $S$ is at most
$s+1$. Therefore, the approximation factor $\frac{s+1}{\text{sign-rank}(S)}$ is
at least $1$. On the other hand,
Proposition~\ref{prop:dualsignrank} implies that $d\leq\text{sign-rank}(S)$.
Thus, by the guarantee of Welzl's algorithm,
\begin{align*}
\frac{s+1}{\text{sign-rank}(S)} \leq O\left(\frac{N^{1-1/d}}{\text{sign-rank}(S)}\right) 
\leq O\left(\frac{N^{1-1/d}}{d}\right). \end{align*}
This factor is 
maximized for $d = \Theta(\log N)$ and is therefore at most $O( N/ \log N)$. 

{
\subsubsection{Counting VC classes}\label{sec:counting}

Here we prove Theorems~\ref{thm:VCcount} and~\ref{thm:MAXcount}.
It is convenient for both to set
$$f=\sum_{i=0}^d {N \choose i}.$$

\begin{proof}[Proof of Theorem~\ref{thm:VCcount}]
We start with the upper bound.
Enumerate the members of each such class $C$ as follows. Start
with the (lexicographically) first member $c\in C$, call it $c_1$. Assuming
$c_1,c_2, \ldots ,c_i$ have already been chosen, let $c_{i+1}$ be the
member $c$ among the remaining vectors in $C$ whose hamming distance  from the set $\{c_1,\ldots,c_i\}$
is minimum (in case of equalities we take the first one lexicographically).
This gives an enumeration $c_1,\ldots,c_m$ of the members of $C$, and
$m \leq f$.

We now upper bound the number of possible families. There are at most
$2^N$ ways to choose $c_1$. If the distance of $c_{i+1}$ from the
previous sets is $h=h_{i+1}$, then we can determine $c_{i+1}$ by giving
the index $j \leq i$ so that the distance between $c_{i+1}$ and $c_j$ is
$h$, and by giving the symmetric difference of $c_{i+1}$ and $c_j$. There
are less than $m \leq f$ ways  to choose the index, and at most
${n \choose h} <(eN/h)^h$ options for the symmetric
difference. The crucial point is that by Theorem~\ref{t200} 
the number of $i$ for which $h_i \geq D$ 
is less than $e(d+1)(2eN/D)^d$. Hence the number of $i$ for which $h_i$ is between $2^\ell$ and $2^{\ell+1}$ is at most $e(d+1)(2eN/2^\ell)^d$.
This upper bounds $c(N,d)$ by at most
$$
2^N m^f \prod_{\ell} \left((eN/2^{\ell})^{2^{\ell+1}} \right)^{e(d+1)(2eN/2^\ell)^d}
\leq 2^N f^f N^{(O(N))^d}=N^{(O(N))^d}.
$$


We now present a lower bound on the number of (maximum)
classes with VC dimension $d$.
Take a family $F$ of ${N \choose {d}}/(d+1)$ subsets of $[N]$ of size
$(d+1)$ so that
every subset of size $d$ is contained in exactly one of them. Such
families exist by a recent breakthrough result of Keevash~\cite{Keevash}, 
provided the trivial divisibility conditions hold and $N>N_0(d)$.
His proof also gives that there are $N^{(1+o(1)){N \choose d}/(d+1)}$ such families.

Now, construct a class $C$ by taking all subsets of cardinality at most $d-1$,
and for each $(d+1)$-subset in the family $F$ take it and all its
subsets of cardinality $d$ besides one. The VC dimension
of $C$ is indeed $d$. The number of possible $C$s that can be constructed this way is at least the number of families $F$. 
Therefore, the number of classes of VC dimension $d$ is at least the number of $F$s:       
$$N^{(1+o(1)){N \choose d}/(d+1)}=N^{(\Omega(N/d))^d}.$$
\end{proof}

\begin{proof}[Proof of Theorem~\ref{thm:MAXcount}]
For the upper bound we use the known fact that every maximum class is a connected subgraph of the boolean cube~\cite{Welzl}. Thus, to upper bound the number of maximum classes of VC dimension $d$ it is enough
to upper bound the number of connected subgraphs of the $N$-dimensional cube of size $f$.
It is known (see, e.g., Lemma 2.1 in~\cite{Al3}) 
that the number of connected subgraphs of size $k$ 
in a graph with $m$ vertices and maximum degree $D$ is at most 
$m(eD)^k$.
In our case, plugging $k=f$, $m=2^N$, $D=N$ yields the desired bound $2^N(eN)^f=N^{(1+o(1))f}$.

For the lower bound, note that in the proof of Theorem~\ref{thm:VCcount} the constructed classes were of size $f$, and therefore maximum classes. Therefore, there are at least $N^{(1+o(1)){N \choose d}/(d+1)}$ maximum classes of VC dimension $d$.
\end{proof}
}


\subsubsection{Counting graphs}
\label{sec:countingGraphs}
\begin{proof}[Proof of Theorem~\ref{thm:Udfree}]
The key observation is that whenever we split the vertices of a $U(d+1)$-free graph into two
disjoint sets  of equal size, the bipartite graph between them
defines a matrix of VC dimension at most $d$.
Hence, the number of such bipartite graphs is at most
$$
T(N,d)=2^{O(N^{2-1/d} \log N)}.
$$
By a known  lemma of Shearer \cite{CFGS}, this implies that the
total number of $U(d+1)$-free graphs on $N$  vertices is less than
$T(N,d)^2=2^{O(N^{2-1/d} \log N)}$.
For completeness, we include the simple details. The lemma we use
is the following.

\begin{lemma}[\cite{CFGS}]
\label{l47} 
Let ${\cal F}$ be a family of vectors in $S_1 \times
S_2 \cdots \times S_n$. Let 
${\cal G}=\{G_1, \ldots, G_m \}$
be a collection of subsets of $[n]$,
and suppose
that each element $i \in [n]$ belongs to at least $k$ members of
${\cal G}$. For each $1 \leq i \leq m$, let ${\cal F}_i$ be the set
of all projections of the members of ${\cal F}$ on 
the coordinates in $G_i$. Then
$$
|{\cal F}|^k \leq \prod_{i=1}^m |{\cal F}_i|.
$$
\end{lemma}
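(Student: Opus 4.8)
The plan is to prove Lemma~\ref{l47} by a standard entropy argument. Let $X=(X_1,\ldots,X_n)$ be a uniformly random element of $\mathcal{F}$, so that its Shannon entropy satisfies $H(X)=\log|\mathcal{F}|$. For each $j\in[m]$ write $X_{G_j}$ for the restriction of $X$ to the coordinates in $G_j$; since $X_{G_j}$ is supported on $\mathcal{F}_j$ we have $H(X_{G_j})\le\log|\mathcal{F}_j|$. Hence it suffices to prove the entropy inequality
\[
\sum_{j=1}^m H(X_{G_j})\ \ge\ k\cdot H(X),
\]
because exponentiating the resulting bound $\sum_j\log|\mathcal{F}_j|\ge k\log|\mathcal{F}|$ yields $\prod_j|\mathcal{F}_j|\ge|\mathcal{F}|^k$.

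To establish the entropy inequality I would fix the natural order $1<2<\cdots<n$ on the coordinates and, for each $j$, expand $H(X_{G_j})$ by the chain rule along this order: writing $[i-1]=\{1,\ldots,i-1\}$,
\[
H(X_{G_j})=\sum_{i\in G_j}H\big(X_i\mid X_{G_j\cap[i-1]}\big).
\]
The key step is that conditioning on a larger set of variables only decreases entropy, so $H(X_i\mid X_{G_j\cap[i-1]})\ge H(X_i\mid X_{[i-1]})$, and therefore $H(X_{G_j})\ge\sum_{i\in G_j}H(X_i\mid X_{[i-1]})$. Summing over $j$ and exchanging the order of summation gives $\sum_j H(X_{G_j})\ge\sum_{i=1}^n|\{j:i\in G_j\}|\cdot H(X_i\mid X_{[i-1]})$. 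Since every coordinate $i$ lies in at least $k$ of the sets and each term $H(X_i\mid X_{[i-1]})$ is nonnegative, the right-hand side is at least $k\sum_{i=1}^n H(X_i\mid X_{[i-1]})=k\cdot H(X)$, by one more application of the chain rule.

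This uses only elementary facts about Shannon entropy (the chain rule, the bound that $H(Y)$ is at most the logarithm of the number of values $Y$ takes, and "conditioning reduces entropy"), so I do not expect a real obstacle; the only point needing care is to run the chain rule with one fixed global coordinate order, so that the conditional-entropy terms align when the inequalities for the various $G_j$ are added. An alternative, entropy-free proof proceeds by induction on $n$ via a Cauchy--Schwarz (Loomis--Whitney style) projection estimate, but the entropy proof is shorter, and that is the one I would present.
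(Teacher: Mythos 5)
Your proof is correct and complete. Note that the paper itself does not prove this lemma at all: it is quoted from \cite{CFGS} as a known result (the ``simple details'' the authors include refer only to its application to counting $U(d+1)$-free graphs), so there is no in-paper argument to compare against. The entropy argument you give is the standard modern proof of Shearer's lemma, and every step checks out: the chain rule along a single fixed global coordinate order, the ``conditioning reduces entropy'' inequality $H(X_i\mid X_{G_j\cap[i-1]})\ge H(X_i\mid X_{[i-1]})$, the exchange of summation using the covering hypothesis together with nonnegativity of conditional entropy, and the bound $H(X_{G_j})\le\log|\mathcal{F}_j|$ from the support of the projection. Your remark about fixing one global order so the conditional-entropy terms align is exactly the point where sloppier write-ups go wrong, and you handle it properly. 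The original proof in \cite{CFGS} is an entropy-free induction in the Loomis--Whitney/Cauchy--Schwarz style you allude to; the entropy route is shorter and is a perfectly acceptable substitute here.
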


In our application, $n={N \choose 2}$ and $S_1 = \ldots = S_n=\{0,1\}$.
The vectors represent graphs on $N$ vertices,
each vector
being the characteristic vector of a graph on $N$ labeled vertices.
The set $[n]$ corresponds to the set of all ${N \choose 2}$ potential edges.
The family $\cal F$ represents all $U(d+1)$-free graphs.
The collection ${\cal G}$ is the set of all complete 
bipartite graphs with $N/2$ vertices in each color class. 
Each edge $i \in [n]$ belongs to at least (in fact a bit more than) half of them, i.e.,
$k \geq m/2$. 
Hence,
$$
|{\cal F}| \leq \left( \prod_{i=1}^m |{\cal F}_i| \right)
^{2/m} \leq \left( ( T(N,d))^m \right)^{2/m},
$$
as desired.
\end{proof}

\section{Concluding remarks and open problems}
We have given explicit examples of $N \times N$ sign matrices with small
VC dimension and large sign rank. However, we have not 
been able to prove that any of them has sign rank exceeding
$N^{1/2}$. Indeed this seems to be the limit of Forster's approach,
even if we do not bound the VC dimension. 
Forster's theorem
shows that the sign rank of any $N \times N$ Hadamard matrix is
at least $N^{1/2}$. It is easy to see that there
are Hadamard matrices of sign rank
significantly smaller than linear in $N$. Indeed, the sign rank of
the $4 \times 4$ signed identity matrix is $3$, and hence the sign
rank of its $k$'th tensor power, which is an $N \times N$ Hadamard
matrix with $N=4^k$,
is at most $3^k=N^{\log 3/\log 4}$ (a similar argument was given by~\cite{DBLP:journals/tcs/ForsterS06} for the Sylvester-Hadamard matrix). It may well be, however, that
some Hadamard matrices have sign rank linear in $N$, as do random
sign matrices, and it will be very interesting to show that this is
the case for some such matrices.
It will also be interesting to decide what is the correct behavior
of the sign rank of the incidence graph of the points and lines of
a projective plane with $N$ points. 
We have seen that it is at least $\Omega(N^{1/4})$
and at most $O(N^{1/2})$.

Using our spectral technique 
we can give many additional explicit examples of
matrices with high sign rank, including ones for which the
matrices not only have VC dimension $2$, but are more restricted
than that (for example, no $3$ columns have more than
$6$ distinct projections). 

We have shown that the maximum sign rank $f(N,d)$ of an $N \times N$
matrix with VC dimension $d>1$ 
is at most $O(N^{1-1/d})$, and that this is
tight up to a logarithmic factor for $d=2$,
and close to being tight for large $d$.
It seems plausible to conjecture that
$f(N,d)=\tilde{\Theta}(N^{1-1/d})$ for all $d>1$.

We have also showed how to use this upper bound
to get a nontrivial approximation algorithm for the sign rank.
It will be interesting to fully understand the computational complexity
of computing the sign rank.

Finally we note that most of the analysis in this paper can be
extended to deal with $M \times N$  matrices, where $M$ and $N$ are
not necessarily equal, and we restricted the attention here for 
square matrices mainly in order to simplify the presentation.

\section*{Acknowledgements}
We wish to thank Rom Pinchasi, Amir Shpilka, and Avi Wigderson
for helpful discussions and comments.

\bibliographystyle{plain}
\bibliography{srankbib}

\end{document}